\newlength{\intwidth}
\DeclareRobustCommand{\Bint}
   {\mathop{%
      \text{%
        \settowidth{\intwidth}{$\int$}%
        \makebox[0pt][l]{\makebox[\intwidth]{$-$}}%
        $\int$}}}
\DeclareMathOperator\Div{div}
\DeclareMathOperator\curl{curl}
\newcommand\Lone{\xrightarrow[\mathrm{a.s.}]{L^1}}
\newcommand{\pd}{\partial}
\newcommand\om\omega
\newcommand{\al}{\alpha}
\newcommand{\be}{\beta}
\newcommand{\si}{\sigma}
\newcommand{\Si}{\Sigma}
\newcommand{\De}{\Delta}
\def\RR{\mathbb{R}}
\renewcommand\ell{{ l }}
\DeclareMathOperator\Real{Re}
\DeclareMathOperator\Imag{Im}
\def\cN{\mathcal N}
\def\cZ{\mathcal Z}
\def\hu{\widehat u}
\def\cA{\mathcal A}
\def\TT{\mathbb{T}}
\def\S{\mathbb{S}}
\newcommand\vp\varphi
\newcommand\ka\kappa
\newcommand\te\theta
\newcommand\vka\varkappa
\DeclareMathOperator\Tr{tr}
\newcommand\de\delta
\newcommand\La\Lambda
\newcommand\la\lambda
\newcommand\ga\gamma
\newcommand\CC{\mathbb C}
\newcommand\Ga\Gamma
\newcommand\cT{\mathcal T}
\newcommand\cI{\mathcal I}
\newcommand\cJ{\mathcal J}
\newcommand\cD{\mathcal{D}}
\newcommand{\triple}[1]{{\left\vert\kern-0.25ex\left\vert\kern-0.25ex\left\vert #1
        \right\vert\kern-0.25ex\right\vert\kern-0.25ex\right\vert}}
\renewcommand{\eqref}[1]{%
  \begingroup%
  \let\ref\@refstar%
  \hyperref[#1]{%
    \originaleqref{#1}%
  }%
  \endgroup
}
\newcommand{\vol}[1]{|#1|}
\newcommand{\bE}{\mathbb{E}}
\newcommand{\bP}{\mathbb{P}}
\newcommand{\cX}{\mathcal{X}}
\newcommand\bPu{\mu_u}
\renewcommand*{\@fnsymbol}[1]{\ensuremath{\ifcase#1\or *\or \star\or ***\or
   \mathsection\or \mathparagraph\or \|\or **\or \dagger\dagger
   \or \ddagger\ddagger \else\@ctrerr\fi}}
\newcommand\ep\varepsilon
\newtheorem{theorem}{Theorem}[section]
\newtheorem{lemma}[theorem]{Lemma}
\newtheorem{proposition}[theorem]{Proposition}
\newtheorem{corollary}[theorem]{Corollary}
\theoremstyle{definition}
\newtheorem{remark}[theorem]{Remark}
\newcommand\hF{{\widehat{F}}}
\numberwithin{equation}{section}
\renewcommand\leq\leqslant
\renewcommand\geq\geqslant
\newcommand\cz{c^{\mathrm{z}}}
\newcommand\Nz{N^{\mathrm{z}}_u}
\newcommand\NzL{N^{\mathrm{z}}_{u^L}}
\newcommand\NzLz{N^{\mathrm{z}}_{u^{L,z}}}
\newcommand\No{N^{\mathrm{o}}_u}
\newcommand\NoL{N^{\mathrm{o}}_{u^L}}
\newcommand\Nt{N^{\mathrm{t}}_u}
\newcommand\NtL{N^{\mathrm{t}}_{u^L}}
\newcommand\NteL{N^{\mathrm{t,e}}_{u^L}}
\newcommand\Vt{V^{\mathrm{t}}_u}
\newcommand\VtL{V^{\mathrm{t}}_{u^L}}
\newcommand\Nh{N^{\mathrm{h}}_u}
\newcommand\NhL{N^{\mathrm{h}}_{u^L}}
\newcommand\nuz{\nu^{\mathrm{z}}}
\newcommand\nuo{\nu^{\mathrm{o}}}
\newcommand\nut{\nu^{\mathrm{t}}}
\newcommand\nuh{\nu^{\mathrm{h}}}
\newcommand\nuhs{\nu^{\mathrm{h}}_*}
\newcommand\htop{h_{\mathrm{top}}}
\newcommand\fS{\mathfrak{S}}
\title[Knots and chaos in random Beltrami fields]{Almost sure existence of
  knots and chaos\\ in random Beltrami fields}
\title[Beltrami
  fields exhibit knots and chaos]{Beltrami
  fields exhibit\\ knots and chaos almost surely}
\author{Alberto Enciso}
\address{Instituto de Ciencias Matem\'aticas, Consejo Superior de
  Investigaciones Cient\'\i ficas, 28049 Madrid, Spain}
\email{aenciso@icmat.es}
\author{Daniel Peralta-Salas}
\address{Instituto de Ciencias Matem\'aticas, Consejo Superior de
 Investigaciones Cient\'\i ficas, 28049 Madrid, Spain}
\email{dperalta@icmat.es}
\author{\'Alvaro Romaniega}
\address{Instituto de Ciencias Matem\'aticas, Consejo Superior de
 Investigaciones Cient\'\i ficas, 28049 Madrid, Spain}
\email{alvaro.romaniega@icmat.es}
\begin{document}
\maketitle

\begin{abstract}
  In this paper we show that, with probability~$1$, a random Beltrami
  field exhibits chaotic regions that coexist with invariant tori of
  complicated topologies. The motivation to consider this question,
  which arises in the study of stationary Euler flows in dimension~3,
  is V.I.~Arnold's 1965 conjecture that a typical Beltrami field
  exhibits the same complexity as the restriction to an energy
  hypersurface of a generic Hamiltonian system with two degrees of
  freedom. The proof hinges on the obtention of asymptotic bounds for
  the number of horseshoes, zeros, and knotted invariant tori and
  periodic trajectories that a Gaussian random Beltrami field
  exhibits, which we obtain through a nontrivial extension of the
  Nazarov--Sodin theory for Gaussian random monochromatic waves and the application of different tools from the theory of dynamical systems, including KAM theory, Melnikov analysis and hyperbolicity. Our
  results hold both in the case of Beltrami fields on~$\RR^3$ and of
  high-frequency Beltrami fields on the 3-torus.
\end{abstract}

\section{Introduction}

Beltrami fields, that is, eigenfunctions of the curl operator
satisfying
\begin{equation}\label{BF}
\curl u =\la u
\end{equation}
on~$\RR^3$ or on the flat torus~$\TT^3$ for some nonzero
constant~$\la$, are a classical family of stationary solutions to the
Euler equation in three dimensions. However, the significance of
Beltrami fields in the context of ideal fluids in equilibrium was only
unveiled by V.I.~Arnold in his
influential work on stationary Euler flows. Indeed, Arnold's
structure theorem~\cite{Ar65,Ar66} ensures that, under
suitable technical assumptions, a smooth stationary solution to the 3D Euler
equation is either integrable or a Beltrami field. In the language of
fluid mechanics, an integrable flow is usually called laminar, so
complex dynamics (as expected in Lagrangian turbulence) can only
appear in a fluid in equilibrium through Beltrami fields. This
connection between Lagrangian turbulence and Beltrami fields is so
direct that physicists have even coined the term ``Beltramization'' to
describe the experimentally observed phenomenon that the velocity field and its curl
(i.e., the vorticity) tend to align in turbulent regions (see
e.g.~\cite{Farge,Monchaux}).

Motivated by H\'enon's numerical studies of ABC flows~\cite{He66},
which are the easiest examples of Beltrami fields,
Arnold conjectured~\cite{Ar65,Ar66} that Beltrami fields exhibit the
same complexity as the restriction to an energy level of a typical
mechanical system with two degrees of freedom. To put it differently,
a typical Beltrami field should then exhibit chaotic regions
coexisting with a positive measure set of invariant tori of
complicated topology.

Although specific instances of chaotic ABC flows in the nearly
integrable regime have been known for a long time~\cite{ABC}, the
conjecture is wide open. A major step towards the proof of this claim
was the construction of Beltrami fields on~$\RR^3$ with periodic
orbits and invariant tori (possibly with homoclinic
intersections~\cite{Alex} inside) of arbitrary knotted
topology~\cite{Annals,Acta}. In fluid mechanics, these periodic orbits
and invariant tori are usually called {vortex lines} and vortex tubes,
respectively, and in fact the existence of vortex lines of any
topology had also been conjectured by Arnold in the same papers. These results
also hold~\cite{ENS} in the case of Beltrami fields on~$\TT^3$,
which, contrary to what happens in the case of~$\RR^3$, have finite
energy; this is important for applications because~$\RR^3$ and~$\TT^3$
are the two main settings in which mathematical fluid mechanics is studied. The main drawback of the approach we developed to prove these
results is that, while we managed to construct structurally stable
Beltrami fields exhibiting complex behavior, the method of proof
provides no information whatsoever about to what extent complex
behavior is typical for Beltrami fields.

Our objective in this paper is to establish Arnold's view of
complexity in Beltrami fields. To do so, the key new tool is
a theory of random Beltrami fields, which we develop here in order to
estimate the probability that a Beltrami field exhibits certain
complex dynamics. The blueprint for this is the Nazarov--Sodin theory
for Gaussian random monochromatic waves, which yields asymptotic laws for the
number of connected nodal components of the wave. Heuristically, the basic idea is that a Beltrami field
satisfying~\eqref{BF} can be thought of as a vector-valued
monochromatic wave; however, the vector-valued nature of the solutions
and the fact that we aim to control much more sophisticated geometric
objects introduces essential new difficulties from the very beginning.

\subsection{Overview of the Nazarov--Sodin theory for Gaussian random
  monochromatic waves}

The Nazarov--Sodin theory~\cite{NS16}, whose original motivation was
to understand the nodal set of random spherical harmonics of large
order~\cite{NS09}, provides a very efficient tool to derive asymptotic
laws for the distribution of the zero set of smooth Gaussian functions
of several variables. The primary examples are various Gaussian
ensembles of large-degree polynomials on the sphere or on the torus
and the restriction to large balls of translation-invariant Gaussian
functions on~$\RR^d$. Most useful for our purposes are their
asymptotic results for Gaussian random monochromatic
waves, which are random solutions to the Helmholtz equation
\begin{equation}\label{Helmholtz}
\De F + F=0
\end{equation}
on~$\RR^d$. We will henceforth restrict ourselves to the case $d=3$
for the sake of concreteness.

As the Fourier transform of a solution to the Helmholtz
equation~\eqref{Helmholtz} must be supported
on the sphere of radius~1, the way one constructs random monochromatic waves
is the following~\cite{CS19}. One starts with a real-valued
orthonormal basis of the space of square-integrable functions on the
unit two-dimensional sphere~$\S$. Although the choice of basis is
immaterial, for concreteness we can think of the basis of spherical
harmonics, which we denote by~$Y_{lm}$. Hence
$Y_{lm}$ is an eigenfunction of the spherical Laplacian with
eigenvalue $l(l+1)$, the index $l$ is a non-negative integer and $m$
ranges from $-l$ to~$l$. The degeneracy of the eigenvalue $l(l+1)$ is
therefore $2l+1$. To consider a Gaussian random monochromatic
wave, one now sets
\begin{subequations}\label{random}
\begin{equation}\label{random1}
\vp(\xi)\coloneqq \sum_{l=0}^\infty\sum_{m=-l}^{l} i^l \, a_{\ell m}\,Y_{lm}(\xi)
\end{equation}
on the unit sphere  $|\xi|=1$, $\xi\in\RR^3$, where $a_{l m}$ are independent
standard Gaussian random variables. One then defines~$F$ as the Fourier transform of the measure~$\vp\, d\sigma$, where $d\sigma$ is
the area measure of the unit sphere. This is tantamount to setting
\begin{equation}\label{random2}
F(x)\coloneqq(2\pi)^{\frac 32} \sum_{l=0}^\infty\sum_{m=-l}^{l} a_{lm} \,
Y_{lm}\bigg(\frac x{|x|}\bigg) \,\frac{J_{l+\frac 12}(|x|)}{|x|^{\frac 12}}\,.
\end{equation}
\end{subequations}

The central known result concerning the asymptotic distribution of the
nodal components of Gaussian random monochromatic waves is that, almost surely,
the number of connected components of the nodal set that are contained
in a large ball (and even those of any fixed compact topology) grows
asymptotically like the volume of the ball. More precisely, let us denote by $N_F(R)$ (respectively, $N_F(R;[\Si])$) the number of connected components of the nodal
set $F^{-1}(0)$ that are contained in the ball centered at the origin
of radius~$R$ (respectively, and diffeomorphic to~$\Si$). Here $\Si$ is any smooth, closed, orientable
surface $\Si\subset\RR^3$. It is obvious from the definition that $N_F(R;[\Si])$ only depends on
the diffeomorphism class of the surface,~$[\Si]$. The main result of
the theory ---which is due to Nazarov and Sodin~\cite{NS16} in the
case of nodal sets of any topology,
and to Sarnak and Wigman when the topology of the nodal
sets is controlled~\cite{SW19}--- can then be stated as follows. Here and in what
follows, the symbol $\Lone$ will be used to denote that a certain
sequence of random variables converges both almost surely and
in mean. Morally speaking, this is a law of large numbers for
the number of connected components associated with the Gaussian field~$F$.

\begin{theorem}\label{T.NS}
Let $F$ be a monochromatic random wave. Then there are positive
constants $\nu$, $\nu([\Si])$ such that, as $R\to\infty$,
  \[
    \frac{N_F(R)}{|B_R|}\Lone \nu\,,\qquad
    \frac{N_F(R;[\Si])}{|B_R|}\Lone \nu([\Si])\,.
  \]
  Here~$\Si\subset\RR^3$ is any compact surface as above.
\end{theorem}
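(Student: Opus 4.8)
The plan is to recognize $F$ as a stationary, ergodic Gaussian field on $\RR^3$ and to deploy the Nazarov--Sodin integral-geometric machinery. First I would record that the field defined by~\eqref{random2} is translation invariant in law, with spectral measure equal to the normalized surface measure on the unit sphere $\S$; indeed its covariance kernel is
\[
\bE\big[F(x)\,F(y)\big]=c\,\frac{\sin|x-y|}{|x-y|}\,,
\]
so $F$ solves the Helmholtz equation~\eqref{Helmholtz} pathwise and is smooth. Since the spectral measure is non-atomic, the translation action of $\RR^3$ on the underlying probability space is ergodic (in fact mixing), which is the structural input that will later drive the almost sure statement.

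Second, I would establish the \emph{integral-geometric sandwich}: for a length scale $\rho\gg1$, I partition $B_R$ into cells of side $\rho$ and compare $N_F(R)$ with the sum over cells of the number of nodal components contained in each cell. Every component counted locally is counted globally, which gives a lower bound, while components meeting cell boundaries are controlled by a Crofton-type surface-area bound on the nodal set, which gives a matching upper bound up to an $O(R^2)$ error. Averaging and using translation invariance shows that $\bE[N_F(R)]/|B_R|$ is almost subadditive, so by a Fekete-type argument it converges to a finite limit $\nu\geq0$ as $R\to\infty$; the same argument applied to the restricted counts, together with the diffeomorphism invariance of $N_F(R;[\Si])$, produces the limit $\nu([\Si])$.

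Third, to upgrade convergence in mean to the $\Lone$ statement, I would introduce the local counting functional $x\mapsto N_F(\text{cell at }x)$, verify that it lies in $L^1$ of the probability space (its first moment is finite by the Kac--Rice and Crofton bounds on nodal volume), and apply Wiener's multiparameter ergodic theorem for the $\RR^3$-action. Ergodicity then identifies the spatial average with the constant $\nu$ almost surely, and the sandwich transfers this to $N_F(R)/|B_R|$; the fixed-topology case is identical once the local functional counts only components diffeomorphic to $\Si$.

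The main obstacle is the strict positivity of $\nu$, and especially of $\nu([\Si])$, which is what makes the theorem nonvacuous. For this I would argue that it suffices to exhibit, with positive probability, a single nodal component of the prescribed type inside a fixed ball. I would first produce a \emph{deterministic} solution of~\eqref{Helmholtz} possessing a structurally stable nodal component diffeomorphic to $\Si$; then, using that the Gaussian measure has full support in $C^k_{\mathrm{loc}}$ (a consequence of the non-degeneracy of $F$ and of the density of the spherical harmonics), the random field is $C^k$-close to this deterministic wave on the ball with positive probability. Structural stability of the nodal component under small $C^1$ perturbations then guarantees a component of type $\Si$, so $\nu([\Si])>0$; taking $\Si$ a sphere gives $\nu>0$. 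This positivity-for-prescribed-topology step (the Sarnak--Wigman input~\cite{SW19}) is the delicate point, since it requires both the deterministic realizability of arbitrary topologies and the support statement for the Gaussian ensemble.
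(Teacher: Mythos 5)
First, a point of orientation: the paper does \emph{not} prove Theorem~\ref{T.NS}. It is quoted as background, with the proof attributed to Nazarov--Sodin \cite{NS16} (all topologies) and Sarnak--Wigman \cite{SW19} (prescribed topology); what the paper develops are Beltrami-field analogues of its ingredients (Lemma~\ref{L.sandwich}, Propositions~\ref{P.ergodic} and~\ref{P.support}). Measured against those sources, your architecture is the correct one: the covariance kernel $c\,\sin|x-y|/|x-y|$ and the non-atomic spectral measure giving ergodicity (indeed mixing) of the translation action; an integral-geometric sandwich reducing $N_F(R)$ to ergodic averages of local counts; Wiener's multiparameter ergodic theorem for the $\Lone$ statement; and positivity of $\nu([\Si])$ via a deterministic monochromatic wave with a structurally stable nodal component diffeomorphic to~$\Si$ (this is \cite{EPS13}) combined with the full support of the Gaussian measure in the $C^k$ compact-open topology. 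This is precisely the Nazarov--Sodin/Sarnak--Wigman scheme, and it is also the template the paper later adapts.

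There is, however, a genuine gap in the upper half of your sandwich. You propose to control the components meeting the cell boundaries by ``a Crofton-type surface-area bound on the nodal set.'' This cannot work as stated: an area bound on $F^{-1}(0)$ near the cell walls does not bound the \emph{number} of crossing components, because there is no per-component lower bound on the area a crossing component contributes --- generically a component meets a wall in a curve, of zero two-dimensional measure, and even its area near the wall can be arbitrarily small. The two ingredients actually needed are exactly the ones the paper singles out in its ``Some technical remarks'' (precisely because they have no substitute for Beltrami fields): (i) the Faber--Krahn inequality, which gives a universal positive lower bound for the volume of every nodal domain of a solution of $\De F+F=0$ and hence makes the local counts controlled, integrable random variables; and (ii) the Kac--Rice formula applied to the critical points of the restrictions of $F$ to the spheres bounding the cells, which (via an Euler-characteristic count of the nodal curves of $F|_{\partial B}$) bounds the expected number of components that cross a wall. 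A minor related slip: with cells of fixed side $\rho$, the total wall area inside $B_R$ is of order $R^3/\rho$, not $O(R^2)$, so the crossing-component error is killed by taking $\rho\to\infty$ \emph{after} $R\to\infty$, not by the volume normalization alone. It is worth noting that the paper's own Lemma~\ref{L.sandwich} proves only the \emph{lower} bound of the sandwich for general closed sets, exactly because such upper bounds are the delicate point; the paper can live with one-sided bounds (hence only $\liminf$ estimates in Theorem~\ref{T.R3}), whereas the exact asymptotics of Theorem~\ref{T.NS} indispensably require the upper bound, i.e., ingredients (i) and (ii) above.
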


\subsection{Gaussian random Beltrami fields on~$\RR^3$}

Our goal is then to obtain an extension of the Nazarov--Sodin theory
that applies to random Beltrami fields. As we will discuss later in
the Introduction, this is far from trivial
because there are essential new difficulties that make the
analysis of the problem rather involved.

The origin of many of these difficulties is strongly geometric. In
contrast to the case of random monochromatic waves (or any other
scalar Gaussian field), where the main geometric objects of interest
are the components of its nodal set, in the study of random vector
fields we aim to understand structures of a much subtler geometric
nature. Among these structures, and in increasing order of complexity,
one should certainly consider the following:
\begin{enumerate}
\item {\em Zeros}\/, i.e., points where the vector field
  vanishes.

  \item {\em Periodic orbits}\/, which can be knotted in complicated
    ways.

    \item {\em Invariant tori}\/, that is, surfaces diffeomorphic to
      a 2-torus that are invariant under the flow of the field. They
      can be knotted too.

      \item {\em Compact chaotic invariant sets}\/, which exhibit
        horseshoe-type dynamics and have, in particular, positive
        topological entropy.
      \end{enumerate}
      Recall that a {horseshoe} is defined as a
    compact hyperbolic invariant set on which the time-$T$ flow of~$u$
    is topologically conjugate to a Bernoulli shift~\cite{GH},
    for some~$T$.
Consequently, let us define the following quantities:
\begin{enumerate}
\item $\Nz(R)$ denotes the number of zeros of~$u$ contained
  in the ball~$B_R$.

  \item Given a (possibly knotted) closed curve~$\ga\subset\RR^3$, $\No(R;[\ga])$ denotes
    the number of periodic orbits of~$u$ contained in~$B_R$ that are
    isotopic to~$\ga$.

\item Given a (possibly knotted)  torus~$\cT\subset\RR^3$, $\Vt(R;[\cT])$ is the volume
  (understood as the inner measure) of
  the set of ergodic invariant tori of~$u$ that are contained in~$B_R$ and are isotopic
  to~$\cT$. Ergodic means that we consider invariant tori on which the orbits of $u$ are dense.

  \item $\Nh(R)$ denotes the number of horseshoes of~$u$ contained
    in the ball~$B_R$.
  \end{enumerate}
   Clearly, these quantities only depend on the isotopy
    class of~$\ga$ and~$\cT$.

    It is not hard to believe that these geometric subtleties give
    rise to a number of analytic difficulties. One should mention,
    however, that there also appear other unexpected analytic
    difficulties whose origin is less obvious. They are related to the
    fact that it is not clear how to define a random Beltrami field
    through an analog of~\eqref{random2}. This is because the
    characterization of a monochromatic wave as the Fourier transform
    of a distribution supported on a sphere is the conceptual base of
    the simple definition~\eqref{random1}, which underlies the
    equivalent but considerably more awkward
    expression~\eqref{random2}. Heuristically, analytic difficulties
    stem from the fact that there is not such a clean formula in
    Fourier space for a general Beltrami field. This is because the three components of the Beltrami field (which are
    monochromatic waves) are not independent, so the reduction to a
    Fourier formulation with independent variables is not trivial. We
    refer the reader to Section~\ref{S.random}, where we explain in
    detail how to define Gaussian random Beltrami fields in a way that
    is strongly reminiscent of~\eqref{random2}. Later in this
    Introduction we shall also informally discuss the aforementioned
    difficulties and discuss how we manage to circumvent them using a
    combination of ideas from PDE, dynamical systems and probability

    We can now state our main result for Gaussian random Beltrami
    fields on~$\RR^3$, as defined in Section~\ref{S.random}. Let us
    emphasize that the picture that emerges from this theorem is fully
    consistent with Arnold's view of complexity in Beltrami fields;
    with probability~1, we show that a random Beltrami field is
    ``partially integrable'' in that there is a large volume of
    invariant tori, and simultaneously features many compact chaotic
    invariant sets and periodic orbits of arbitrarily complex
    topologies. This coexistence of chaos and order is
    indeed the essential feature of the restriction to an energy
    hypersurface of a generic Hamiltonian system with two degrees of
    freedom, as Arnold put it. In this direction, Corollary~\ref{C.R3} below is quite illustrative.


\begin{theorem}\label{T.R3}
  Let $u$ be a Gaussian random Beltrami field. Then:
  \begin{enumerate}
\item The topological entropy of~$u$ is positive almost surely. In
  fact, with probability~$1$,
  \[
\liminf_{R\to\infty} \frac{\Nh(R)}{|B_R|}>\nuh\,.
\]

\item With probability~$1$, the volume of ergodic invariant tori of~$u$ isotopic to a given embedded
  torus~$\cT\subset\RR^3$ and the number of periodic orbits
  of~$u$ isotopic to a given closed curve~$\ga\subset\RR^3$ satisfy the volumetric growth estimate
  \[
\liminf_{R\to\infty} \frac{\Vt(R;[\cT])}{|B_R|}>\nut([\cT])\,,\qquad \liminf_{R\to\infty} \frac{\No(R;[\ga])}{|B_R|}>\nuo([\ga])\,.
  \]

\end{enumerate}
The constants $\nuh$, $\nut([\cT])$ and $\nuo([\ga])$ above
are all positive, for any choice of the curve~$\ga$ and the
torus~$\cT$.
\end{theorem}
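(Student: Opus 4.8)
The plan is to combine deterministic local models that realize each structure robustly, a support theorem showing that the random field reproduces these models with positive probability on small balls, and an ergodic counting argument of Nazarov--Sodin type that turns this into a positive volumetric density. \emph{Step 1 (robust local models).} I would first invoke the constructions of~\cite{Annals,Acta,ENS,Alex}: after the normalization $\curl v=v$, there are Beltrami fields on~$\RR^3$ that exhibit, inside a fixed ball~$B_0$, a horseshoe, a periodic orbit isotopic to any prescribed curve~$\ga$, and a positive-measure Cantor family of ergodic invariant tori isotopic to any prescribed torus~$\cT$. The decisive property of each model~$v$ is its structural stability: there exist an integer~$k$ and a~$\de>0$ such that every Beltrami field~$w$ with $\|w-v\|_{C^k(B_0)}<\de$ exhibits the same structure inside~$B_0$, and of the same isotopy class. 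For the horseshoe this is structural stability of uniformly hyperbolic sets; for the periodic orbit it is persistence of a nondegenerate closed orbit together with the invariance of its knot type under small perturbations; for the tori it is KAM persistence of the Diophantine subfamily, whose measure stays bounded below uniformly on the $C^k$-ball. These are precisely the points where hyperbolicity, Melnikov analysis and KAM theory enter.

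\emph{Step 2 (support theorem, the main obstacle).} The crux is to show that for each model
\[
\bP\big(\|u-v\|_{C^k(B_0)}<\de\big)>0 .
\]
This is where the vector-valued difficulties emphasized in the Introduction bite: because the three components of~$u$ are coupled there is no independent-coordinate Fourier parametrization, so one must work directly with the matrix-valued covariance and its reproducing-kernel Hilbert space. I would establish (i) nondegeneracy of the covariance, so that the Cameron--Martin space of~$u$ is rich, and (ii) that this space consists of entire Beltrami fields and is $C^k(B_0)$-dense in the space of Beltrami fields on~$B_0$, the latter via a Runge-type global approximation theorem for solutions of $\curl w=w$. Since the support of a centered Gaussian law is the closure of its Cameron--Martin space, these two facts place~$v$ in the support and give the displayed positivity.

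\emph{Step 3 (stationarity, ergodicity and counting).} The field~$u$ is stationary under translations, and its spectral measure lives on the unit sphere and is non-atomic, so the translation action is ergodic (in fact mixing). Packing~$B_R$ with $\sim c\,|B_R|$ disjoint translates $B_0+x_j$ and using that a structure confined to $B_0+x_j$ is counted once, monotonicity gives
\[
\Nh(R)\ge\sum_j n^{\mathrm h}(u,B_0+x_j),
\]
and analogously for $\No(R;[\ga])$ and for $\Vt(R;[\cT])$, where the local functional is the indicator of the event in Step~2 (respectively the measure of the ergodic tori it produces). By stationarity the summands are identically distributed, with a common positive expectation by Step~2, and a multiparameter ergodic theorem yields almost sure and mean convergence of the normalized counts to a positive limit, which we take as $\nuh$, $\nuo([\ga])$ and $\nut([\cT])$. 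This proves all three volumetric lower bounds, settling the estimates in parts~(i) and~(ii), while the presence of a single horseshoe already forces positive topological entropy. Because Step~1 realizes every isotopy class, the constants are positive for every~$\ga$ and~$\cT$.
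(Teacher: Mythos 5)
Your three-step architecture --- robust deterministic local models, a support theorem giving positive probability of $C^k$-proximity to a model on a fixed ball, and an ergodic counting argument --- is precisely the paper's strategy, so the overall route is right. The genuine gap is in Step~1, for the horseshoe: you propose to simply invoke the constructions of the prior works you cite, but no Beltrami field on~$\RR^3$ with a horseshoe was known before this paper. The existing chaotic examples (non-integrable ABC flows) are only known to be chaotic on~$\TT^3$, where the argument exploits the non-contractibility of the domain, and this is exactly why Section~\ref{S.chaos} exists: the paper constructs an axisymmetric Beltrami field with a heteroclinic cycle joining two hyperbolic periodic orbits, perturbs it inside the Beltrami class, shows the associated Melnikov functions have nondegenerate zeros (with numerically evaluated constants), applies the Birkhoff--Smale theorem, and then uses the Runge-type approximation of Proposition~\ref{P.approx} to produce a global field $U_{\vp p}$ with a horseshoe (Proposition~\ref{P:chaos}). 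Melnikov theory thus enters to build the model, not merely to establish its stability as your Step~1 suggests. Without this construction, part~(i) --- the lower bound on $\Nh(R)$ and the positivity of the topological entropy --- is unsupported. Your citations do suffice for the other two models: knotted hyperbolic periodic orbits come from~\cite{Annals} (Proposition~\ref{P.perBelt}) and positive-measure families of Diophantine invariant tori from~\cite{Acta} (Proposition~\ref{P.toriBelt}).

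The rest differs from the paper only in packaging, with one point to patch. In Step~2, identifying the support of the Gaussian law with the closure of the Cameron--Martin space is a legitimate alternative to the paper's hands-on proof of Proposition~\ref{P.support}, which truncates the series $\sum a_{lm} U_{i^l Y_{lm} p}$ and invokes the same density statement (Proposition~\ref{P.approx}); both hinge on the same approximation theorem, so nothing essential changes. In Step~3, your discrete packing with indicator summands is in fact cleaner on the measurability side than the paper's argument (which needs lower semicontinuity of the counting functionals, the one-sided sandwich bound of Lemma~\ref{L.sandwich}, and a truncation to make the functionals integrable before applying Proposition~\ref{P.ergodic}), and superadditivity of inner measure handles the tori volume. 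But it requires ergodicity of the discrete $\mathbb{Z}^3$-subaction of translations, which does not follow formally from ergodicity of the $\RR^3$-action: you must check that the periodization of the (non-atomic, sphere-supported) spectral measure is still non-atomic, so that the Grenander--Fomin--Maruyama criterion applies to the lattice restriction, or else revert to the continuous ergodic averages that the paper uses. Your parenthetical claim that the action is mixing is unproven and unnecessary for the argument.
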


\begin{corollary}\label{C.R3}
With probability~$1$, a Gaussian random Beltrami field on~$\RR^3$ exhibits infinitely many horseshoes coexisting with an infinite volume of ergodic invariant tori of each isotopy type. Moreover, the set of periodic orbits contains all knot types.
\end{corollary}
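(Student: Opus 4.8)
The plan is to deduce the corollary directly from Theorem~\ref{T.R3} by combining the strict positivity of the various liminfs with the elementary fact that $|B_R|\to\infty$, together with a countability argument that upgrades a statement holding almost surely for each fixed isotopy type to one holding almost surely for all isotopy types at once.

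First I would record that the isotopy classes of embedded closed curves in~$\RR^3$ (i.e.\ knot types) form a countable set, since every tame knot is determined by a finite combinatorial datum such as a planar diagram, and likewise that the isotopy classes of embedded tori relevant to the construction underlying Theorem~\ref{T.R3} — tubular neighborhoods of knots, classified by the underlying knot type together with an integer framing — form a countable family. Enumerate these as $\{[\ga_n]\}_{n\in\bN}$ and $\{[\cT_n]\}_{n\in\bN}$. For each fixed~$n$, Theorem~\ref{T.R3}(ii) furnishes a probability-$1$ event on which $\liminf_{R\to\infty}\Vt(R;[\cT_n])/|B_R|>\nut([\cT_n])>0$ and $\liminf_{R\to\infty}\No(R;[\ga_n])/|B_R|>\nuo([\ga_n])>0$. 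Intersecting these countably many full-measure events with the probability-$1$ event from part~(i) on which $\liminf_{R\to\infty}\Nh(R)/|B_R|>\nuh>0$ yields a single event~$\Omega_0$ of probability~$1$ on which all of these bounds hold simultaneously.

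I would then argue entirely on~$\Omega_0$. Since $|B_R|=\tfrac43\pi R^3\to\infty$, any nonnegative quantity $X(R)$ with $\liminf_{R\to\infty}X(R)/|B_R|>0$ must satisfy $X(R)\to\infty$ (choosing $\ep$ below half the liminf gives $X(R)>\ep\,|B_R|$ for large~$R$). Applying this to $\Nh(R)$ shows the number of horseshoes in balls grows without bound, so~$u$ possesses infinitely many distinct horseshoes; applying it to $\Vt(R;[\cT_n])$ for every~$n$ shows that the total volume of ergodic invariant tori of each isotopy type is infinite; and applying it to $\No(R;[\ga_n])$ gives $\No(R;[\ga_n])\to\infty$, hence $\No(R;[\ga_n])\geq1$ for all large~$R$, so~$u$ has a periodic orbit isotopic to each~$\ga_n$. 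As $\{[\ga_n]\}$ exhausts all knot types, the set of periodic orbits realizes every knot type, which is the final assertion; the coexistence with the infinite volume of tori is automatic since all statements hold on the same event~$\Omega_0$.

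The only genuinely non-automatic ingredient is the countability of the relevant isotopy classes, which is precisely what legitimizes the countable intersection of full-measure events; everything else reduces to the observation that a positive asymptotic density forces divergence. I would therefore take care to pin down exactly which families of tori and curves arise in Theorem~\ref{T.R3}, so that their enumeration — and hence the simultaneous validity of the density bounds over all types on a single probability-$1$ event — is beyond doubt.
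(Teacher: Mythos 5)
Your proposal is correct and is essentially the paper's own argument: Theorem~\ref{T.R3} plus $|B_R|\to\infty$ forces each counted quantity to diverge for a fixed isotopy class, and the countability of isotopy classes of closed curves and embedded tori lets one intersect the countably many probability-$1$ events. The only cosmetic inaccuracy is your description of embedded tori as ``classified by knot type together with an integer framing'' (the boundary torus of a tubular neighborhood does not depend on a framing, and not every embedded torus arises this way); what matters, and what the paper simply invokes as well known, is that the isotopy classes form a countable set.
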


\begin{remark}
  The result we prove  (see Theorem~\ref{T.R3b}) is in fact considerably stronger: we do not only
  prescribe the topology of the periodic orbits and the invariant
  tori we count, but also other important dynamical
  quantities. Specifically, in the case of periodic orbits we have control over
  the periods (which we can pick in a certain interval $(T_1,T_2)$)
  and the maximal Lyapunov exponents (which we can also pick in an interval~$(\La_1,\La_2)$). In the case of
  the ergodic invariant tori, we can control the associated
  arithmetic and nondegeneracy conditions. Details are provided in Section~\ref{S.R3}.
\end{remark}

Unlike the case of nodal set components considered in the context of
the Nazarov--Sodin theory for Gaussian random monochromatic waves, we
do not prove exact asymptotics for the quantities we study, but only
nontrivial lower bounds that hold almost surely. Without getting
technicalities at this stage, let us point out that this is related to
analytic difficulties arising fron the fact that we are dealing with
quantities that are rather geometrically nontrivial. If one considers
a simpler quantity such as the number of zeros of a Gaussian random
Beltrami field, one can obtain an asymptotic distribution law similar
to that of the nodal components of a random monochromatic wave, whose
corresponding asymptotic constant can even be computed explicitly:

\begin{theorem}\label{T.R3zeros}
  With probability~$1$, the number of zeros of a Gaussian random Beltrami field satisfies
  \[
\frac{\Nz(R)}{|B_R|}\Lone \nuz
\]
as $R\to\infty$. The constant is explicitly given by
\begin{equation}\label{nuz}
  \nuz:=
    \cz\int_{\RR^5}|Q(z)|\,
    e^{-\widetilde Q(z))}\,dz=0.00872538\dots\,,
  \end{equation}
where $\cz:= {21^{5/2}}/[{143\sqrt5\,\pi^4}]$, and $Q,\widetilde Q$ are the following homogeneous polynomials
in five variables:
\begin{align}\label{Q(z)}
Q(z)&:=
  z_1z_2^2+z_2^3-z_1^2z_4-z_1z_2z_4-z_3^2z_4+2z_2z_3z_5-z_1z_5^2\,,\\
  \widetilde Q(z) &:= \frac{189}{65} z_1^2+\frac{42}{11}(z_2^2+z_3^2)+\frac{42}{13}(z_4^2+z_1z_4+z_5^2)\,.\label{tildeQ}
\end{align}
\end{theorem}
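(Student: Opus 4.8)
The plan is to combine the Kac--Rice formula, which delivers the mean density of zeros, with an ergodic-theoretic law of large numbers in the spirit of Nazarov--Sodin that upgrades convergence of the mean to the almost sure and mean statements. The decisive simplification, compared with the geometrically subtler objects of Theorem~\ref{T.R3}, is that the zeros of a generic vector field on~$\RR^3$ form a discrete set, so the Kac--Rice machinery applies directly once nondegeneracy is established. First I would record that the Gaussian random Beltrami field~$u$ is a stationary Gaussian vector field on~$\RR^3$, smooth almost surely, whose spectral measure is a vector-valued analogue of the uniform measure on the unit sphere. By invariance under proper rotations the one-point covariance satisfies $\cov(u(x))=c\,I_3$ for some $c>0$, so the density $p_{u(x)}(0)=(2\pi c)^{-3/2}$ is finite and positive and the pair $(u(x),Du(x))$ is a nondegenerate Gaussian vector; this guarantees that, with probability one, the zeros of~$u$ are nondegenerate and isolated and that $\mathbb{E}\,\Nz(R)<\infty$.

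The heart of the mean computation is that the Beltrami constraints drastically reduce the relevant Jacobian data at a zero. Since $u$ is divergence free, $\Tr Du\equiv 0$; and since $\curl u=\la u$, at any point where $u(x)=0$ we also have $\curl u(x)=\la u(x)=0$, so that the antisymmetric part of $Du(x)$ vanishes. Hence, conditioned on $u(x)=0$, the Jacobian $Du(x)$ is a \emph{symmetric traceless} $3\times 3$ matrix, parametrized by five Gaussian coordinates $z=(z_1,\dots,z_5)\in\RR^5$: this is the origin of the five variables. I would then compute the conditional covariance of these five entries given $u(x)=0$, whose inverse is, up to a factor, the quadratic form $\widetilde Q(z)$ of~\eqref{tildeQ}, while the determinant of a symmetric traceless matrix is precisely the cubic $Q(z)$ of~\eqref{Q(z)}.

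By stationarity the Kac--Rice integrand is constant over $B_R$, and the formula gives
\[
\frac{\mathbb{E}\,\Nz(R)}{|B_R|}=p_{u(x)}(0)\,\mathbb{E}\bigl[\,|\det Du(x)|\,\bigm|\,u(x)=0\,\bigr]=\cz\int_{\RR^5}|Q(z)|\,e^{-\widetilde Q(z)}\,dz=\nuz,
\]
where the constant $\cz$ bundles the density $p_{u(x)}(0)$ with the Gaussian normalization of the five conditional coordinates. Evaluating the spherical integrals that produce the covariance entries then yields the stated value of $\cz$ and, numerically, $\nuz=0.00872\dots$; the factor $\pi^{-4}$ there is exactly the combination $(2\pi)^{-3/2}(2\pi)^{-5/2}$ of the two Gaussian normalizations.

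Finally I would promote this mean identity to the almost sure and $L^1$ statement. Because the spectral measure is nonatomic, the translation action on the field is ergodic (indeed mixing), so the additive, translation-covariant functional $R\mapsto\Nz(R)$ obeys a law of large numbers: sandwiching $\Nz(R)$ between sums of zero-counts over a tiling of $B_R$ by unit cubes and applying Wiener's multiparameter ergodic theorem forces $\Nz(R)/|B_R|$ to converge almost surely to the constant density $\nuz$, while a uniform second-moment bound, obtained from the two-point Kac--Rice density, gives convergence in mean. The main obstacle is this last probabilistic step rather than the algebraic computation: one must verify ergodicity of the field and control the two-point density near the diagonal so as to rule out clustering of zeros, both of which follow from the general Nazarov--Sodin framework developed earlier in the paper, specialized here to the discrete (zero-dimensional) nodal set.
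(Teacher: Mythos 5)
Your proposal follows the paper's proof in all essential respects. The core steps coincide: (i) a.s.\ nondegeneracy of the zeros, so that $\Nz(R)$ is a.s.\ finite and lower semicontinuous; (ii) the Kac--Rice formula together with stationarity and the identity covariance of $u(x)$, reducing the mean density to $(2\pi)^{-3/2}\,\bE\{|\det\nabla u(x)|:u(x)=0\}$; (iii) the structural observation that $\Div u=0$ everywhere and $\curl u=u$ at a zero force the conditioned Jacobian to be a traceless \emph{symmetric} matrix, which is exactly where the five variables, the cubic $Q$ (a determinant) and the quadratic form $\widetilde Q$ (half the inverse conditional covariance) come from --- the paper implements the conditioning through the Gaussian regression $\zeta=\nabla u(x)-Bu(x)$ and finds that the $9\times 9$ covariance of $\zeta$ has rank $5$, its kernel encoding precisely your constraints; and (iv) the a.s.\ convergence via the two-sided sandwich estimate for point counts (Lemma~\ref{L.sandwich}) combined with Wiener's ergodic theorem (Proposition~\ref{P.ergodic}).

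The one place you genuinely deviate is the upgrade to convergence in mean, and there your plan is both heavier than necessary and under-justified. You invoke a uniform second-moment bound obtained from the two-point Kac--Rice density and assert that the required diagonal control ``follows from the general Nazarov--Sodin framework developed earlier in the paper''; it does not --- that framework is designed precisely to avoid second-moment computations, and no two-point Kac--Rice estimate appears anywhere in the paper. The paper instead gets $L^1$ convergence for free: Wiener's theorem yields convergence of the ergodic averages $\cA_R\Phi_r$ both a.s.\ and in $L^1$, and since $\Nz(R)/|B_R|$ is squeezed between $(1\pm r/R)^3\cA_{R\pm r}\Phi_r$, both of which converge in $L^1$ to the same constant $\bE\Phi_r=\nuz$, no anti-clustering estimate is needed. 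Your route could be completed, but it would require verifying nondegeneracy of the joint law of $(u(x),u(y))$ for $x\neq y$, a nontrivial check you do not address. Two smaller inaccuracies: the pair $(u(x),\nabla u(x))$ is \emph{not} a nondegenerate Gaussian vector --- the constraints $\Div u=0$ and $\curl u=u$ confine it to an $8$-dimensional subspace of $\RR^{12}$, and what is needed (and true) is only that $u(x)$ alone has nondegenerate, indeed identity, covariance; and the factor $\pi^{-4}$ in $\cz$ is not the bare product $(2\pi)^{-3/2}(2\pi)^{-5/2}=(2\pi)^{-4}$, but that product times the $2^4$ arising from $(\det\Sigma')^{-1/2}$, where $\Sigma'$ is the conditional covariance of the five free Jacobian entries.
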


\subsection{Random Beltrami fields on the torus}\label{S.introT}

A Beltrami field on the flat 3-torus $\TT^3:=(\RR/2\pi\mathbb Z)^3$
(or, equivalently, on the cube of~$\RR^3$ of side length~$2\pi$ with
periodic boundary conditions) is a vector field on~$\TT^3$ satisfying
the eigenvalue equation
\[
\curl v= \la v
\]
for some real number~$\la\neq0$. It is well-known (see e.g.~\cite{Adv}) that the spectrum of
the curl operator on the $3$-torus consists of the numbers of the form
$\la=\pm|k|$ for some vector with integer coefficients $k\in\mathbb
Z^3$. Restricting our attention to the case of positive eigenvalues for the sake
of concreteness, one can therefore label the eigenvalue by a
positive integer~$L$ such that $\la_L=L^{1/2}$. The multiplicity of
the eigenvalue is given by the cardinality of the corresponding set of
spatial frequencies,
\[
\cZ_L:= \{k\in\mathbb Z^3: |k|^2=L\}\,.
\]
By Legendre's three-square theorem, $\cZ_L$ is
nonempty (and therefore~$\la_L$ is an eigenvalue of the curl operator)
if and only if~$L$ is not of the form $4^a(8b+7)$ for nonnegative
integers~$a$ and~$b$.

The Beltrami fields corresponding to the eigenvalue~$\la_L$ must
obviously be of
the form
\begin{equation*}
u^L=\sum_{k\in\cZ_L} V_k^L\, e^{ik\cdot x}\,,
\end{equation*}
for some vectors $V_k^L\in\CC^3$, where $V_k^L=
\overline{V_{-k}^L}$ to ensure that the Beltrami field is
real-valued. Starting from this formula, in Section~\ref{S.BFtorus} we
define the Gaussian ensemble of random Beltrami
fields~$u^L$ of frequency~$\la_L$, which we parametrize by~$L$. The
natural length scale of the problem is~$L^{1/2}$.

Our objective is to study to what extent the appearance of the various
dynamical objects described above (i.e., horseshoes, zeros, and
periodic orbits and ergodic invariant tori of prescribed topology) is
typical in high-frequency Beltrami fields, which corresponds to the
limit $L\to\infty$. When taking this limit, we shall always assume
that the integer~$L$ is {\em admissible}\/, by which we mean that it
is congruent with 1, 2, 3, 5 or 6 modulo~8. We will see in
Section~\ref{S.BFtorus} (see also~\cite{Roz17}) that this number-theoretic condition ensures
that the dimension of the space of Beltrami fields with
eigenvalue~$\la_L$ tends to infinity as $L\to\infty$.

To state our main result about high-frequency random Beltrami fields
in the torus we need to introduce some notation. In parallel with the
previous subsection, for any closed curve~$\ga$ and any embedded
torus~$\cT$, let us respectively denote by $\NzL$, $\NhL$,
$\NoL([\ga])$ and $\NtL([\cT])$ the number of zeros, horseshoes,
periodic orbits isotopic to~$\ga$ and ergodic invariant tori isotopic
to~$\cT$ of the field~$u^L$, as well as the volume (i.e., inner
measure) of these tori, which we denote by $\VtL([\cT])$. To further
control the distribution of these objects, let us define the
number of approximately equidistributed ergodic invariant tori,
$\NteL([\cT])$, as the largest
integer~$m$ for which $u^L$ has~$m$ ergodic invariant tori isotopic
to~$\cT$ that are at a distance greater than~$m^{-1/3}$ apart from one
another. The
number of approximately equidistributed horseshoes
$N^{\mathrm{h},\mathrm{e}}_{u^L}$, periodic orbits isotopic to a curve
$N^{\mathrm{o},\mathrm{e}}_{u^L}([\ga])$ and zeros
$N^{\mathrm{z},\mathrm{e}}_{u^L}$ are defined analogously. Note that, again, the asymptotic information that we obtain is perfectly aligned
with Arnold's view of complex behavior in typical Beltrami fields.

\begin{theorem}\label{T.torus}
  Let us denote by $(u^L)$ the parametric Gaussian ensemble of random
  Beltrami fields on~$\TT^3$, where~$L$ ranges over the set of admissible
  integers. Consider any contractible closed curve~$\ga$ and any
  contractible embedded torus~$\cT$ in~$\TT^3$. Then:
  \begin{enumerate}
  \item With a probability tending to~$1$ as $L\to\infty$, the
    field~$u^L$ exhibits an arbitrarily large number of approximately distributed horseshoes,
    zeros, periodic orbits isotopic to~$\ga$ and ergodic invariant
    tori isotopic to~$\cT$. More precisely, for any integer~$m$,
\[
  \lim_{L\to \infty} \bP\Big\{\min\big\{N^{\mathrm{h},\mathrm{e}}_{u^L},\NteL([\cT]),N^{\mathrm{o},\mathrm{e}}_{u^L}([\ga]),N^{\mathrm{z},\mathrm{e}}_{u^L}\big\} >m\Big\}=1\,.
\]
Furthermore, the probability that the topological entropy
      of the field grows at least as~$L^{1/2}$ and that there are infinitely many ergodic invariant
      tori of~$u^L$ isotopic to~$\cT$ also tends to~$1$:
    \[
\lim_{L\to \infty} \bP\big\{\NtL([\cT])=\infty\; \text{ and } \; \htop(u^L)> \nuhs L^{1/2}\big\}=1\,.
      \]

    \item The  expected
      volume of the ergodic invariant tori of~$u^L$ isotopic to~$\cT$ is
      uniformly bounded from below, and the expected number of
      horseshoes and periodic orbits isotopic to~$\ga$ is at
      least of order~$L^{3/2}$:
      \begin{align*}
        \liminf_{L\to\infty} \min\Bigg\{\frac{\bE\NhL}{L^{3/2}}\,,
        \frac{\bE\NoL([\ga])}{L^{3/2}}\,,  \bE \VtL([\cT])\Bigg\}>\nu_*([\ga],[\cT])\,.
      \end{align*}
       In the case of zeros, the asymptotic expectation is explicit,
       with $\nuz$ given by~\eqref{nuz}:
      \[
\lim_{L\to\infty}\frac{\bE\NzL}{L^{3/2}}=  (2\pi)^3\nuz\,.
\]
  \end{enumerate}
  Here $\nuhs$ and $\nu_*([\ga],[\cT])$ are positive constants.
\end{theorem}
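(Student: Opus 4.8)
The plan is to transfer the local, flexible constructions of complex dynamics available for deterministic Beltrami fields (periodic orbits, knotted invariant tori, and horseshoes of prescribed type, as in \cite{Annals,Acta,ENS}) into a probabilistic setting by combining a Nazarov--Sodin-type counting argument with a stability/transversality principle. The central idea is that each of the dynamical features we wish to count is robust under $C^k$-perturbations on a fixed ball: if a Beltrami field $w$ on $B_{R_0}$ exhibits, say, a horseshoe, or a knotted periodic orbit with Lyapunov exponent in $(\La_1,\La_2)$, or a nondegenerate ergodic invariant torus, then every Beltrami field $C^k$-close to $w$ on $B_{R_0}$ exhibits the same feature. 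This robustness comes from hyperbolicity (persistence of horseshoes and of hyperbolic periodic orbits), from KAM theory and Melnikov analysis (persistence of Diophantine invariant tori under the relevant nondegeneracy assumptions), and from transversality of zeros. I would first isolate a model field $w$ realizing all four phenomena simultaneously on a ball, using the deterministic results, and record that the ``good event'' $\{u \text{ is } \ep\text{-close to } w \text{ on } B_{R_0}\}$ forces the desired local dynamics.

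\medskip

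Second, I would set up the probabilistic counting. The Gaussian random Beltrami field (on $\RR^3$, or $u^L$ on $\TT^3$ after rescaling by $L^{1/2}$) is a translation-invariant Gaussian vector field whose covariance converges, as $L\to\infty$, to that of the limiting random Beltrami field on $\RR^3$; this is the content of the spectral construction of Section~\ref{S.random} and~\ref{S.BFtorus}. The key estimate I would establish is that the probability $p_0 := \bP\{u \text{ is } \ep\text{-close to } w \text{ on } B_{R_0}\}$ is strictly positive; this follows from the fact that the Gaussian field has a nondegenerate law in $C^k(B_{R_0})$, so its support contains every Beltrami field on that ball, in particular $w$. Then, tiling $B_R$ by roughly $|B_R|/|B_{R_0}|$ disjoint translates and using the (asymptotic) independence or ergodicity of the random field with respect to translations, an ergodic-theorem / law-of-large-numbers argument of Nazarov--Sodin type yields that the number of translates hosting a copy of $w$ grows like $p_0\,|B_R|$ almost surely. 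Choosing $w$ to host the various dynamical objects gives the volumetric lower bounds with $\nuh,\nut([\cT]),\nuo([\ga])$ proportional to $p_0$, and letting $w$ realize curves of arbitrary knot type yields Corollary~\ref{C.R3}. For the torus statement, the same tiling argument at scale $L^{-1/2}$ produces $\Theta(L^{3/2})$ good cells, which after taking expectations gives the $L^{3/2}$ lower bounds in part~(ii) and, via a second-moment/concentration estimate, the convergence-in-probability statements in part~(i), including approximate equidistribution (by spacing the good cells at the prescribed scale $m^{-1/3}$) and the growth $\htop(u^L)>\nuhs L^{1/2}$ from the number of independent horseshoes.

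\medskip

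Third, for the sharp results I would argue differently, since exact asymptotics are only claimed for zeros. The count $\Nz(R)$ (resp.\ $\bE\NzL$) is amenable to the Kac--Rice formula because zeros of a vector field are a transversal, codimension-three phenomenon depending only on pointwise jet data, not on long-time dynamics. I would write $\bE\Nz(R)=\int_{B_R}\bE\big[\,|\det Du(x)|\;\big|\;u(x)=0\,\big]\,p_{u(x)}(0)\,dx$, exploit translation invariance to reduce to a single Gaussian computation at the origin, and reduce the resulting integral over the $5$-dimensional space of admissible jets to the explicit constant~\eqref{nuz}, with $Q,\widetilde Q$ as in~\eqref{Q(z)}--\eqref{tildeQ}; the almost-sure convergence then follows from a variance bound and a Nazarov--Sodin ergodicity argument, and the torus asymptotic $\lim_{L\to\infty}\bE\NzL/L^{3/2}=(2\pi)^3\nuz$ follows from the convergence of the rescaled covariance together with uniform integrability of the Kac--Rice density.

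\medskip

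I expect the main obstacle to be twofold. The first difficulty is genuinely analytic and probabilistic: establishing the nondegeneracy of the law of $u$ in $C^k(B_{R_0})$ (so that $p_0>0$) and, more delicately, the quantitative independence/ergodicity across distant tiles needed for the law of large numbers. Unlike the scalar monochromatic-wave case, the three components of a Beltrami field are coupled through $\curl u=\la u$, so there is no clean independent Fourier parametrization (as emphasized in the Introduction), and one must control the spectral measure on the sphere carefully to get the required decay of correlations and the positivity of the density $p_{u(x)}$. The second difficulty is the dynamical-stability input: one must verify that the persistence of horseshoes, of Diophantine KAM tori with controlled arithmetic and nondegeneracy conditions, and of hyperbolic periodic orbits with Lyapunov exponents in a prescribed interval all hold under a \emph{fixed} amount of $C^k$-closeness on $B_{R_0}$, uniformly enough that the same $\ep$ works for the whole tiling; the KAM persistence in particular requires care because the nondegeneracy constants must be chosen in advance for the model $w$ and then shown to survive in the positive-probability neighborhood. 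These two steps — positivity and decorrelation of the Gaussian law on the one hand, and uniform dynamical robustness on the other — are where the real work lies, while the Kac--Rice evaluation for zeros, though computationally heavy, is conceptually routine.
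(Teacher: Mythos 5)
Your proposal works for the $\RR^3$ statements, where it essentially reproduces the paper's argument (positive probability of a model field via the support theorem, plus Wiener's ergodic theorem). But for the torus theorem under discussion the probabilistic transfer mechanism you propose has a genuine gap. You want to run a law-of-large-numbers argument across the $\Theta(L^{3/2})$ cells of a tiling at scale $L^{-1/2}$, invoking ``asymptotic independence or ergodicity'' of $u^L$ and a ``second-moment/concentration estimate'' to get the convergence-in-probability statements of part~(i). This cannot work as stated: at fixed frequency the field $u^L$ is a finite trigonometric sum with only $d_L=\#\cZ_L$ complex Gaussian degrees of freedom, which is far smaller than the number $\sim L^{3/2}$ of cells, and its covariance kernel $\varkappa^L$ is almost periodic and does not decay, so there is no useful decorrelation between distant cells and no concentration of the cell count can be extracted by second-moment methods without confronting delicate arithmetic variance questions (of the type that plague arithmetic random waves). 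The paper's proof avoids this issue entirely, and this is its key structural point: it first proves \emph{weak convergence} of the rescaled local laws $\mu^{L,z}_R\to\mu_{u,R}$ (Lemma~\ref{L.convmu}, whose substance is a tightness bound via uniform covariance estimates, Proposition~\ref{P.covL}, and Arzel\`a--Ascoli), then observes that the relevant events are \emph{open} because the counting functionals are lower semicontinuous (Propositions~\ref{P.lscpo}, \ref{P.lscit}, \ref{P.lsch}, \ref{P.lscz}), so the portmanteau theorem transfers the $\RR^3$ probability bounds of Theorem~\ref{T.R3b} to each cell. Since $m$ in part~(i) is an arbitrary but \emph{fixed} integer, a finite union bound over $A$ disjoint balls suffices (no independence), and part~(ii) concerns expectations, so linearity of expectation over disjoint balls suffices (again no independence). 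Your outline is missing precisely this weak-convergence-plus-portmanteau step; covariance convergence alone, which is what you cite, does not bound the probability of the good event for $u^{L,z}$.

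A second, more localized error: you claim the entropy growth $\htop(u^L)>\nuhs L^{1/2}$ follows ``from the number of independent horseshoes.'' Topological entropy is not additive over disjoint compact invariant sets---it is a supremum---so having $\Theta(L^{3/2})$ horseshoes does not by itself produce entropy growing in $L$. The correct mechanism, used in the paper, is the rescaling relation $\htop(u^L)=L^{1/2}\,\htop(u^{L,z})$ (Abramov's formula applied to the time-reparametrized flow), combined with the fact that a \emph{single} horseshoe of the rescaled field has entropy bounded below by a universal constant with probability tending to~$1$. Your Kac--Rice treatment of the zero count, on the other hand, is consistent with the paper's argument.
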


\begin{remark}
  As in the case of~$\RR^3$, the result we prove in
  Section~\ref{S.BFtorus} is actually stronger in the
  sense that we have control over important dynamical quantities
  (which now depend strongly on~$L$)
  describing the flow near the above invariant tori and periodic orbits.
\end{remark}

\subsection{Some technical remarks}

In a way, the cornerstone of the Nazarov--Sodin theory is their very
clever (and non-probabilistic) ``sandwich estimate'', which relates the number $N_F(R)$ of
connected components of the nodal set of the Gaussian random field~$F$
that are contained in an
arbitrarily large ball~$B_R$ with ergodic averages of the same quantity
involving the number of components contained in balls of fixed
radius. Two ingredients are key to effectively apply this sandwich estimate. On the one hand, each nodal
component cannot be too small by the Faber--Krahn inequality, which
ensures, in dimension~3, that its volume is at least $c\la^{-3}$ if
$\De F+\la^2F=0$. On the other hand, to control the
connected components that intersect a large ball but are not contained
in it, it suffices to employ the Kac--Rice formula to derive bounds for the number of critical points
of a certain family of Gaussian random functions.

In the setting of random Beltrami fields, the need for new ideas
becomes apparent the moment one realizes that there are no reasonable
substitutes for these two key ingredients. That is,
the frequency~$\la$ does not provide bounds for the size of the more
sophisticated geometric objects considered in this context (i.e., periodic
orbits, invariant tori or horseshoes), and one cannot estimate the
objects that intersect a ball but are not contained in it using a
Kac--Rice formula. As a matter of fact, we have not managed to obtain
any useful bounds for these quantities and, while we do use a sandwich
inequality of sorts (or at least lower bounds that can be regarded as a
weaker substitute thereof), even the measurability of the various
objects of interest becomes a nontrivial issue due to their
complicated geometric properties.

To circumvent these problems, we employ different kinds of
techniques. Firstly, ideas from the theory of dynamical systems play a
substantial role in our proofs. On the one hand, KAM theory
and hyperbolic dynamics are important to prove that certain
carefully chosen functionals are lower semicontinuous, which is key to
solve measurability issues that would be very hard to deal with
otherwise. Furthermore, to prove that Beltrami fields exhibit chaotic
behavior almost surely, it is essential to have at least one example
of a Beltrami field that features a horseshoe, and even that was not
known. Indeed, the available examples of non-integrable ABC flows are
known to be chaotic on~$\TT^3$ due to the non-contractibility of the
domain, but not on~$\RR^3$. This technical point is
fundamental, and makes them unsuitable for the study of
random Beltrami fields. Therefore, an important step in our proof is
to construct, using Melnikov theory, a Beltrami field on~$\RR^3$ that
has a horseshoe. Techniques from Fourier analysis and from the global
approximation theory for Beltrami fields are also necessary to handle the
inherent difficulties that stem from the fact that the equation under
consideration is more complicated than that of a monochromatic
wave. As an aside, the only point of the paper where we use the
Kac--Rice formula is to compute the constant~$\nuz$ in closed form.

In the case of Beltrami fields on the torus, the results we prove
concern not only the expected values of the quantities of interest,
but also the probability of events. In the case of random
monochromatic waves on the torus, Nazarov and Sodin~\cite{NS16} had
proved results for the expectation (which apply to very general
parametric scalar Gaussian ensembles), and Rozenshein~\cite{Roz17} had
derived very precise exponential bounds for the probability akin to
those established by Nazarov and Sodin~\cite {NS09} for random
spherical harmonics. However, both results use in a crucial way that
the size of nodal components can be effectively estimated in terms of
the frequency: the Faber--Krahn inequality provides a lower bound for
the volume and large diameter components can be ruled out using a
Crofton-type formula and B\'ezout's theorem. No such bounds hold in
the case of Beltrami fields, so the way we pass
from the information that the rescaled covariant kernel of~$u^L$ tends
to that of~$u$ to asymptotics for the distribution of invariant tori,
horseshoes or periodic orbits is completely different. Specifically,
we rely on a direct argument ensuring the
weak convergence of sequences of probability measures, on spaces of
smooth functions, provided that suitable tightness conditions are satisfied.

\subsection{Outline of the paper} In Section~\ref{S.Fourier}, we start
by describing Beltrami fields in~$\RR^3$ from the point of view of
Fourier analysis and provide some results about global
approximation. Gaussian random Beltrami fields on~$\RR^3$ are
introduced in Section~\ref{S.random}, where we also establish several
results about the structure of the corresponding covariance matrix and
about the induced probability measure on the space of smooth vector
fields. In Section~\ref{S.preliminaries} we recall, in a form that
will be useful in later sections, several previous results about
ergodic invariant tori and periodic orbits arising in Beltrami
fields. Section~\ref{S.chaos} is devoted to constructing a Beltrami
field on~$\RR^3$ that is stably chaotic. Finally, in Sections~\ref{S.R3}
and~\ref{S.BFtorus} we complete the proofs
of our main results in the case of~$\RR^3$ and~$\TT^3$,
respectively. The paper concludes with an Appendix where we provide a
fairly complete Fourier-theoretic characterization of Beltrami fields.

\section{Fourier analysis and approximation of Beltrami fields}
\label{S.Fourier}

In what follows, we will say that a vector field $u$ on~$\RR^3$ is a
Beltrami field if
\[
\curl u = u\,.
\]
Taking the curl of this equation and using that necessarily $\Div
u=0$, it is easy to see that~$u$ must also satisfy the Helmholtz equation:
\[
\De u +u=0\,.
\]
To put it differently, the components of this vector field are
monochromatic waves. An immediate consequence of this is that the
Fourier transform~$\hu$ of a polynomially bounded Beltrami field is a
(vector-valued) distribution supported on the unit sphere
\[
\S:=\{\xi\in\RR^3:|\xi|=1\}\,.
\]
Since~$u$ is real-valued, $\hu$ must be Hermitian, i.e., $\hu(\xi)=
\overline{\hu(-\xi)}$. Furthermore, a classical result due to
Herglotz~\cite[Theorem 7.1.28]{Hor15} ensures that if~$u$ is a Beltrami field
with the sharp fall off at infinity, then there is a Hermitian vector-valued function $f\in L^2(\S,\CC^3)$ such that
$\hu = f\, d\si$; for the benefit of the reader, details on this and other related matters are
summarized in Appendix~\ref{A.Fourier}. For short, we shall simply write this relation as
$u=U_f$, with
\begin{equation}\label{uf}
U_f(x) :=\int_\S f(\xi)\, e^{i\xi\cdot x}\, d\si(\xi)\,.
\end{equation}
Obviously $U_f$ is a Beltrami field if and only if~$f$ is
Hermitian (which makes~$U_f$ real valued) and if it satisfies the
distributional equation on the sphere
\begin{equation}\label{eqf}
i\xi\times f(\xi)=f(\xi)\,.
\end{equation}

In this paper, we are particularly interested in Beltrami fields of
the form $u=U_f$, where now $f$ is a general Hermitian vector-valued
distribution on the sphere.
The corresponding integral, which is convergent if~$f$ is integrable,
must be understood in the sense of distributions for less regular~$f$
(that is to say, for $f$ in the scale of Sobolev spaces $H^s(\S,\CC^3)$ with
$s<0$). We recall, in particular, that for any integer $k\geq0$ the
field~$U_f$ is bounded as~\cite[Appendix~A]{random}
\begin{equation}\label{bounduu}
\sup_{R>0} \frac1R\int_{B_R}\frac{|U_f(x)|^2}{1+|x|^{2k}}\, dx\leq C\|f\|_{H^{-k}(\S,\CC^3)}\,.
\end{equation}
We recall that, for any real~$s$, the $H^s(\S)$~norm
of a function~$f$ can be computed as
\[
\|f\|_{H^s(\S)}^2=\sum_{l=0}^\infty\sum_{m=-l}^{l} (l+1)^{2s}|f_{l m}|^2\,,
\]
where $f_{lm}$ are the coefficients of the spherical harmonics
expansion of $f$.

With $q(t):= \frac18(\frac{15}{\pi})^{1/2}(1+\sqrt 7 i\, t)$, let us consider the vector-valued polynomial
\begin{equation}\label{p(xi)}
p(\xi):=q(\xi_1)\, (\xi_1^2-1,\xi_1\xi_2-i\xi_3,\xi_1\xi_3+i\xi_2)\,,
\end{equation}
which we will regard as a Hermitian function $p:\RR^3\to\CC^3$. Note
that the restriction of $p$ to the sphere vanishes exactly at the
poles $\xi_\pm:=(\pm1,0,0)$. The inessential nonvanishing
normalization factor $q(\xi_1)$ has been introduced for later
convenience: when we define random Beltrami fields via the
function~$p$ in Section~\ref{S.random}, this choice of~$p$ will ensure
that the associated covariance matrix is the identity on the diagonal
(see Corollary~\ref{C.diag}).

We next show
that, away from the poles, the density~$f$ of a Beltrami field~$U_f$
must point in the same direction as~$p$:

\begin{proposition}\label{P.p(xi)}
  The following statements hold:
  \begin{enumerate}
  \item If the vector field~$U_f$ is a Beltrami field, then
    $p\times f=0$ as a distribution on~$\S$. Furthermore, if $\chi$ is a
    smooth real-valued function on the sphere supported in
    $\S\backslash\{\xi_+,\xi_-\}$ and $f\in H^s(\S,\CC^3)$ for some real~$s$, then there is a Hermitian scalar function
    $\vp\in H^s(\S)$ such that $\chi\, f= \vp\, p$.
\item Conversely, for any Hermitian $\vp\in
H^s(\S)$, the associated field~$U_{\vp p}$
is a Beltrami field.
\end{enumerate}
\end{proposition}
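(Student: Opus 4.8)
The plan is to exploit that, for each unit vector $\xi$, the linear map $A(\xi)v:=i\,\xi\times v$ is a Hermitian endomorphism of $\CC^3$ (because $\xi\times\,\cdot\,$ is real and antisymmetric) whose eigenvalues are exactly $\{1,0,-1\}$, the eigenvalue $0$ corresponding to the direction $\xi$. The Beltrami equation \eqref{eqf} says precisely that $f$ lies in the one‑dimensional $+1$ eigenspace, so the whole proposition reduces to showing that $p$ spans this eigenspace away from the poles and then recasting the pointwise statement ``$f$ is parallel to $p$'' as exact distributional identities --- the only genuinely delicate point, since $f$ may be a distribution of negative order.

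First I would record the two facts about $p$ on which everything rests. A direct computation using $|\xi|^2=1$ shows that $i\,\xi\times p=p$ on $\S$, so $p$ is itself a smooth Beltrami density; dotting this relation with $\xi$ gives $\xi\cdot p=0$, and the identical dotting of \eqref{eqf} gives $\xi\cdot f=0$ as a distribution, since the smooth coefficient matrix $\xi^{\top}(\xi\times\,\cdot\,)$ vanishes identically. Recall also that $p$ vanishes on $\S$ exactly at the poles $\xi_\pm$, as already noted.

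For the first assertion of (i) I would prove $p\times f=0$ by a purely algebraic argument that is manifestly valid for distributions, as it only involves multiplication of $f$ by smooth coefficients. Substituting $f=i\,\xi\times f$ and using the double cross product identity together with $\xi\cdot p=0$ gives $p\times f=i\,\xi\,(p\cdot f)$; substituting instead $p=i\,\xi\times p$ and using $\xi\cdot f=0$ gives $p\times f=-i\,\xi\,(p\cdot f)$. Comparing the two and using $|\xi|^2=1$ forces the scalar distribution $p\cdot f$ to vanish, whence $p\times f=0$. For the localization, the key point is that the spectral (orthogonal) projection of $A(\xi)$ onto its $+1$ eigenspace is the globally smooth matrix $P:=\tfrac12(A^2+A)$, a polynomial in $\xi$. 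Since \eqref{eqf} reads $Af=f$, we get $A^2f=f$ and hence $Pf=f$ as distributions on all of $\S$. Away from the poles the range of $P(\xi)$ is the line $\CC\,p(\xi)$, so there $P(\xi)v=(\overline{p(\xi)}\cdot v)\,p(\xi)/|p(\xi)|^2$; multiplying by the cutoff therefore yields $\chi f=\chi Pf=\vp\,p$ with $\vp:=\chi\,(\overline p\cdot f)/|p|^2$. The coefficient $\chi\,\overline p/|p|^2$ is a smooth $\CC^3$-valued function on $\S$ (its support avoids the poles), and since multiplication by smooth functions is bounded on every $H^s(\S)$ we obtain $\vp\in H^s(\S)$; its Hermitian symmetry is inherited from that of $f$ and $p$ (with $\chi$ taken even). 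Part (ii) is then immediate: for Hermitian $\vp\in H^s$ the vector distribution $\vp p$ is Hermitian and satisfies $i\,\xi\times(\vp p)=\vp\,(i\,\xi\times p)=\vp\,p$, i.e.\ \eqref{eqf}, so by the criterion recalled just before the proposition $U_{\vp p}$ is a Beltrami field.

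The main obstacle is purely the low regularity. All the natural reasoning is pointwise --- ``$f(\xi)$ is a scalar multiple of the eigenvector $p(\xi)$'' --- but $f$ is only an $H^s$ distribution with possibly $s<0$, so such pointwise statements are a priori meaningless. The device that resolves this is to express every projection and every vanishing relation through smooth-coefficient operators applied to $f$, namely the polynomial spectral projection $P=\tfrac12(A^2+A)$ and the double cross product identity, so that the single distributional identity $Af=f$ can be propagated rigorously throughout the argument.
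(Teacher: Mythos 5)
Your proof is correct, and it is worth comparing with the paper's. Both arguments rest on the same linear-algebraic fact: for $|\xi|=1$ the map $v\mapsto i\xi\times v$ has a one-dimensional eigenspace for the eigenvalue $1$, spanned by $p(\xi)$ away from the poles. The paper implements this by studying the kernel of $M_\xi:=I-i\xi\times\cdot$ (rank $2$), introducing an auxiliary field $\widetilde p(\xi)= q(\xi_2)\,(\xi_1\xi_2+i\xi_3,\xi_2^2-1,\xi_2\xi_3-i\xi_1)$ that spans the kernel near the poles, and then writing $f=\al\, p$ away from the poles and $f=\be\,\widetilde p$ near them, concluding $p\times f=0$ from $p\times\widetilde p=0$; the localization $\chi f=\vp\, p$ is then obtained by the same smooth-multiplier formula you use. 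Your route differs in two substantive ways. First, you derive $p\cdot f=0$ and $p\times f=0$ by a purely algebraic double-cross-product manipulation ($p\times f=i\xi\,(p\cdot f)=-i\xi\,(p\cdot f)$), which involves only multiplication of $f$ by smooth coefficients and is therefore valid verbatim for distributions of arbitrary (negative) Sobolev order; the paper's pointwise factorization through $\al$ and $\be$ is more informal at this level of regularity, and your version also dispenses with the auxiliary field $\widetilde p$ entirely. Second, you obtain the localization from the polynomial spectral projection $P=\tfrac12(A^2+A)$, propagating the single identity $Af=f$ to $Pf=f$ and then writing $\chi P$ as the globally smooth rank-one multiplier $p\otimes(\chi\,\overline p/|p|^2)$; this makes the passage from ``$f$ is parallel to $p$'' to the exact identity $\chi f=\vp\,p$ rigorous with no pointwise reasoning. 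Two minor remarks: your formula $\vp=\chi\,(\overline p\cdot f)/|p|^2$, with the conjugate, is the right one (note that $p\cdot p=0$ on~$\S$, so the unconjugated pairing in the paper's displayed formula should be read as the Hermitian one); and you correctly flag that Hermitian symmetry of $\vp$ requires $\chi$ even, a point the paper passes over silently and which is harmless in all applications, since the cutoffs there can always be chosen symmetric under $\xi\mapsto-\xi$.
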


\begin{proof}
In view of Equation~\eqref{eqf}, for each vector~$\xi\in\S$, consider the linear map $M_\xi$ on $\CC^3$
defined as
\[
M_\xi V:=V-i\xi\times V\,.
\]
More explicitly, $M_\xi$ is the matrix
$$
M_\xi=\left(
\begin{array}{ccc}
-1& -i \xi_3 & i \xi_2 \\
i \xi_3 & -1 & -i \xi_1 \\
-i \xi_2& i \xi_1 &-1 \\
\end{array}
\right)\,.
$$
The determinant of this matrix is $\det
M_\xi=\xi_1^2+\xi_2^2+\xi_3^2-1$, and in fact it is easy to see that
$M_\xi$ has rank~2 for any unit vector~$\xi$. Since $M_\xi
p(\xi)=0$ for all $\xi\in\S$ and $p(\xi)$ only vanishes if $\xi=\xi_\pm$, we
then obtain that the kernel of~$M_\xi$ is spanned by the
vector~$p(\xi)$ whenever~$\xi$ is not one of the poles~$\xi_\pm$. In a neighborhood of the
poles, the kernel of~$M_\xi$ can be described as the linear span of $\widetilde
p(\xi):= q(\xi_2)\, (\xi_1\xi_2+i\xi_3,\xi_2^2-1, \xi_2\xi_3-i \xi_1)$.

Since $M_\xi f(\xi)=0$ in the sense of distributions by~\eqref{eqf}, it stems from
the above analysis that one can write
\[
f(\xi)= \al(\xi)\, p(\xi)
\]
for $\xi$ away from the poles, and
\[
f(\xi)= \beta(\xi)\, \widetilde p(\xi)
\]
in a neighborhood of the poles; here $\al$ and~$\beta$ are complex-valued scalars. As $p(\xi)\times \widetilde
p(\xi)=0$ for all~$\xi\in\S$, we immediately infer that
\[
p\times f=0\,.
\]
Also, as the support of a function is a closed set, $p$ is bounded away from zero on the support
of~$\chi$, so we have that
\[
\vp:= \chi \frac{f\cdot p}{|p|^2}\in H^s(\S)\,.
\]
As $f$ is Hermitian, this proves the first part of the proposition. The second statement
follows immediately from the fact that
\[
M_\xi[\vp(\xi)p(\xi)]= \vp(\xi)\,M_\xi p(\xi)=0\,.
\]
\end{proof}

\begin{remark}\label{R.cucu}
A Beltrami field of the form $U_{\vp p}$ can be written in terms of
the scalar function $\psi(x):= -\int_\S e^{i\xi\cdot x} q(\xi_1)\, \vp(\xi)\,
d\si(\xi)$ (which satisfies the equation $\De\psi+\psi=0$) as
\[
  U_{\vp p}=(\curl \curl  +\curl) (\psi,0,0)\,.
\]
Also, it
has the sharp decay bound $|U_{\vp p}(x)|\leq C\|\vp\|_{L^2(\S)}/(1+|x|)$.
\end{remark}

\begin{remark}
Not any Beltrami field of the form~$U_f$ can be written
as $U_{\vp p}$ for some scalar function~$\vp$: an obvious
counterexample is given by
\begin{equation}\label{counterex}
  f(\xi):= (0,1,i)\, \de_{\xi_+ }(\xi) + (0,1,-i) \, \de_{\xi_- }(\xi) \,,
\end{equation}
where $\de_{\xi_\pm}$ is the Dirac measure supported on the
pole~$\xi_\pm=(\pm1,0,0)$. The reason for which we cannot hope to
describe all Beltrami fields using just scalar multiples of a fixed
complex-valued continuous vector field~$p'$ is topological. Indeed,
as~$u$ is divergence-free, we have that $\xi\cdot p'(\xi)=0$, so $p'$
must be a tangent complex-valued vector field on~$\S$. By the hairy ball theorem, the
real part of~$p'$ must then have at least one zero~$\xi^*$.  The
equation $i\xi\times p'(\xi)= p'(\xi)$ implies that the imaginary part
of~$p'$ also vanishes at~$\xi^*$, so in fact $p'(\xi^*)=0$. This means
that densities~$f$ such
as~\eqref{counterex}, where we can take $\xi^*:= \xi_+$ without any
loss of generality, cannot be written in the form~$\vp p'$.
\end{remark}

Intuitively speaking, Proposition~\ref{P.p(xi)} means that any
Beltrami field~$U_f$ whose density~$f$ is not too concentrated
on~$\xi_\pm$ can be approximated globally by a field of the
form~$U_{\vp p}$. More precisely, one can prove the following:

\begin{proposition}\label{P.approx1}
Consider a Hermitian vector-valued distribution $f$ on~$\S$ that satisfies the distributional equation~\eqref{eqf}, and define
\[
\ep_{f,k}:= \inf \big\{\|\Theta f\|_{H^{-k}(\S)} : \Theta\in
C^\infty(\S),\; \Theta(\xi_+)=\Theta(\xi_-)=1\big\}\,.
\]
If $\ep_{f,k}$ is finite and $\ep>\ep_{f,k}$, one can then take a
Hermitian scalar distribution on the sphere $\vp$, which is in fact
a finite linear combination of spherical harmonics if $f\in H^{-k}(\S,\CC^3)$, such
that
\[
\sup_{R>0}\frac1R\int_{B_R} \frac{|U_f(x)-U_{\vp p}(x)|^2}{1+|x|^{2k}}\, dx< C\ep\,.
\]
Furthermore, $\ep_{f,0}=0$ if $f\in L^2(\S,\CC^3)$.
\end{proposition}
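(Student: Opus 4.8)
The plan is to reduce the entire statement, via the linearity of the map $g\mapsto U_g$ together with the a priori bound \eqref{bounduu}, to a purely static approximation problem on the sphere. Since $f$ satisfies \eqref{eqf} and, by Proposition~\ref{P.p(xi)}(ii), so does $\vp p$ for any Hermitian scalar $\vp$, the difference $f-\vp p$ is again a Hermitian density satisfying \eqref{eqf}, and $U_f-U_{\vp p}=U_{f-\vp p}$. Feeding this into \eqref{bounduu} yields
\[
\sup_{R>0}\frac1R\int_{B_R}\frac{|U_f(x)-U_{\vp p}(x)|^2}{1+|x|^{2k}}\,dx\leq C\|f-\vp p\|_{H^{-k}(\S,\CC^3)}.
\]
Hence it suffices to produce a Hermitian scalar $\vp$ ---a finite combination of spherical harmonics when $f\in H^{-k}(\S,\CC^3)$--- with $\|f-\vp p\|_{H^{-k}(\S)}<\ep$.

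To build $\vp$, I would fix $\Theta\in C^\infty(\S)$ with $\Theta(\xi_+)=\Theta(\xi_-)=1$ and $\|\Theta f\|_{H^{-k}}<\ep$, which is possible because $\ep>\ep_{f,k}$; one may take $\Theta$ real and invariant under $\xi\mapsto-\xi$, so that $(1-\Theta)f$ stays Hermitian. The natural candidate is
\[
\vp:=(1-\Theta)\,\frac{f\cdot p}{|p|^2},
\]
which is well defined away from the poles and, by Proposition~\ref{P.p(xi)}, satisfies $\vp p=(1-\Theta)f$ there, since $p\times f=0$ forces $f=\frac{(f\cdot p)}{|p|^2}\,p$. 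Granting that $\vp$ extends to an element of $H^{-k}(\S)$ and that this identity persists across the poles, one obtains the exact cancellation $f-\vp p=\Theta f$, whence $\|f-\vp p\|_{H^{-k}}=\|\Theta f\|_{H^{-k}}<\ep$. Finally, because multiplication by the smooth vector $p$ is bounded on $H^{-k}(\S)$ and finite spherical-harmonic sums are dense there, I would replace $\vp$ by a nearby Hermitian finite combination without spoiling the estimate; this also yields the stated regularity of $\vp$ in the $H^{-k}$ case.

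The hard part will be precisely the behaviour at the two poles, where $|p|^2$ vanishes to second order while $1-\Theta$ a priori vanishes only to first order, so that $\vp$ seems to carry a $|p|^{-1}$-type singularity. The task is to show that the vanishing of $1-\Theta$, combined with the fact that $f\cdot p$ itself vanishes at the poles (the kernel of $M_\xi$ there being spanned by $\widetilde p$, parallel to $p$ off the poles), makes $\vp$ a genuine $H^{-k}$ distribution and keeps the identity $\vp p=(1-\Theta)f$ valid across the poles. I expect this to require choosing $\Theta$ so that $1-\Theta$ vanishes to an order matching $k$ near the poles ---for instance of the form $1-\Theta=|p|^{2m}\eta$ with $\eta$ smooth and $m$ large--- and then checking that restricting the infimum defining $\ep_{f,k}$ to such $\Theta$ does not raise it, so that $\|\Theta f\|_{H^{-k}}<\ep$ remains attainable. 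This pole analysis, carried out with the explicit kernel vectors $p$ and $\widetilde p$, is the only genuinely delicate point; everything else is soft.

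The final assertion $\ep_{f,0}=0$ for $f\in L^2(\S,\CC^3)$ is the clean endpoint case, and I would prove it directly. Taking $\Theta_\delta$ supported in a $\delta$-neighborhood of $\{\xi_+,\xi_-\}$ with $0\leq\Theta_\delta\leq1$ and $\Theta_\delta(\xi_\pm)=1$, absolute continuity of the integral gives $\|\Theta_\delta f\|_{L^2}^2\leq\int_{\{\Theta_\delta>0\}}|f|^2\to0$ as $\delta\to0$, so the infimum vanishes. In this $L^2$ regime the pole difficulty disappears entirely, which is consistent with $\ep$ being allowed arbitrarily small.
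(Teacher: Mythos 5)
Your overall architecture coincides with the paper's: reduce, via linearity of $g\mapsto U_g$ and the bound \eqref{bounduu}, to producing a Hermitian scalar $\vp$ with the exact cancellation $f-\vp p=\Theta f$ and $\|\Theta f\|_{H^{-k}(\S)}<\ep$; then use density of finite spherical-harmonic combinations when $f\in H^{-k}(\S,\CC^3)$; and prove $\ep_{f,0}=0$ by bumps shrinking to the poles (your absolute-continuity argument is the paper's dominated-convergence argument). Those parts, as well as the parity remark ensuring $\vp$ is Hermitian, are fine.

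The genuine gap is the one you flag yourself: the division by $|p|^2$ at the poles. In your setup $1-\Theta$ vanishes at $\xi_\pm$ only pointwise, so the first part of Proposition~\ref{P.p(xi)} --- which requires the cutoff $\chi$ to be \emph{supported in} $\S\setminus\{\xi_+,\xi_-\}$ --- does not apply, and your candidate $\vp=(1-\Theta)\,\frac{f\cdot p}{|p|^2}$ need not define a distribution, nor need the identity $\vp p=(1-\Theta)f$ persist across the poles. Moreover, your remedial plan cannot work as stated: you propose to make $1-\Theta$ vanish ``to an order matching $k$'', but $f$ is an arbitrary compactly supported distribution, lying in some $H^{s}(\S,\CC^3)$ with $s$ unrelated to $k$; in particular $f$ may contain derivatives of Dirac masses at $\xi_\pm$ of arbitrarily high (finite) order, which survive multiplication by any $1-\Theta$ vanishing to a prescribed finite order, so no fixed $m$ suffices. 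You would also still owe the proof that restricting the infimum defining $\ep_{f,k}$ to your special class of cutoffs does not raise it. The paper's resolution is the idea you are missing, and it makes the pole analysis disappear entirely: since $\ep>\ep_{f,k}$, one first picks an admissible $\Theta$ with $\|\Theta f\|_{H^{-k}(\S)}<\ep'<\ep$ and then observes that one may take $\Theta\equiv 1$ on a whole neighborhood of the poles. With that choice $\chi:=1-\Theta$ is smooth and supported in $\S\setminus\{\xi_+,\xi_-\}$, where $p$ is bounded away from zero, so Proposition~\ref{P.p(xi)} applies verbatim and yields $\chi f=\vp p$ with $\vp\in H^{s}(\S)$; the cancellation $f-\vp p=\Theta f$ is then exact and no singular division is ever performed. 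With that single change, the rest of your argument goes through as written.
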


\begin{proof}
The first assertion is a straightforward consequence of the first part of Proposition~\ref{P.p(xi)}
and of the estimate~\eqref{bounduu}. Indeed, since $f$ is a compactly supported distribution, then $f\in H^s(\S,\CC^3)$ for some~$s$. Take
any $\ep'\in(\ep_{f,k},\ep)$ and let us consider a
function~$\Theta$ as above such that $\|\Theta f\|_{H^{-k}(\S)} <\ep'$. Since $\ep'>\ep_{f,k}$, it is obvious that we can assume that $\Theta=1$ in a small neighborhood of the poles $\xi_\pm$. Applying Proposition~\ref{P.p(xi)} we infer that $\chi f=\vp p$ with
$\chi:=1-\Theta$ and some Hermitian scalar function~$\vp\in H^s(\S)$. In view of
the fact that the map $f\mapsto U_f$ is linear and of
the bound~\eqref{bounduu}, we then have
\[
\sup_{R>0}\frac1R\int_{B_R} \frac{|U_f(x)-U_{\vp
    p}(x)|^2}{1+|x|^{2k}}\, dx=\sup_{R>0}\frac1R\int_{B_R}
\frac{|U_{\Theta f}(x)|^2}{1+|x|^{2k}}\, dx \leq C\|\Theta f\|_{H^{-k}(\S,\CC^3)}<C\ep'\,.
\]

As finite linear combinations of spherical harmonics are dense in $H^s(\S)$, if $s=-k$ we
can approximate $\vp$ in the~$H^{-k}( \S)$~norm by a Hermitian function~$\vp'$
of this form; then
\begin{multline*}
\sup_{R>0}\frac1R\int_{B_R} \frac{|U_f(x)-U_{\vp'
    p}(x)|^2}{1+|x|^{2k}}\, dx\\
\leq \sup_{R>0}\frac1R\int_{B_R}
\frac{|U_{f}(x)-U_{\vp p}(x)|^2}{1+|x|^{2k}}\, dx+ \sup_{R>0}\frac1R\int_{B_R}
\frac{|U_{(\vp'-\vp)p}(x)|^2}{1+|x|^{2k}}\, dx <C\ep
\end{multline*}
provided that $\|\vp-\vp'\|_{H^{-k}(\S)}<\ep-\ep'$.

Finally, to see that $\ep_{f,0}=0$ if $f\in
L^2(\S,\CC^3)$, let us take a smooth function $\Theta:\RR^3\to[0,1]$ supported
in the unit ball and such that $\Theta(0)=1$. Setting
\[
  \Theta_n(\xi):= \Theta(n \xi- n\xi_+) + \Theta(n \xi- n\xi_-)\,,
\]
we trivially get that  $\|\Theta_n f\|_{L^2(\S)}\leq
\|f\|_{L^2(\S)}$ for all $n\geq2$ and that $\Theta_n f$ tends to zero
almost everywhere in~$\S$ as $n\to\infty$. The dominated convergence theorem then
shows that $\|\Theta_n f\|_{L^2(\S)}\to0$ as $n\to\infty$, thus proving
the claim.
\end{proof}

Another, rather different in spirit,
formulation of the principle that densities of the form~$\vp p$ can
approximate general Beltrami fields is presented in the following
theorem. Unlike the previous corollary, the approximation is
considered only locally in space, and in this direction one shows that even
considering smooth functions~$\vp$ is enough to obtain a subset of
Beltrami fields that is dense in the $C^k$ compact-open topology:

\begin{proposition}\label{P.approx}
Fix any positive reals~$\ep$ and~$k$ and a compact set
$K\subset\RR^3$ such that $\RR^3\backslash K$ is connected. Then, given any vector field~$v$ satisfying the
equation~$\curl v=v$ in an open neighborhood of~$K$, there exists a
Hermitian finite linear
combination of spherical harmonics~$\vp$ such that the Beltrami field~$U_{\vp p}$ approximates~$v$ in the set~$K$ as
\[
\|U_{\vp p}-v\|_{C^k(K)}<\ep\,.
\]
\end{proposition}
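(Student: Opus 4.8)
The plan is to reduce to the Runge-type global approximation theorem for Beltrami fields established in our earlier work and then to convert a \emph{global but weak} weighted-$L^2$ bound into the desired \emph{local} $C^k$ estimate by interior elliptic regularity for the Helmholtz operator. The two structural inputs from this section are that every field of the form $U_{\vp p}$ is automatically a Beltrami field (Proposition~\ref{P.p(xi)}(ii)), and that by Proposition~\ref{P.approx1} such densities approximate arbitrary $L^2$ densities.

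\emph{Step 1: reduction to a global field with sharp decay.} First I would invoke the global approximation theorem for Beltrami fields from~\cite{Annals,Acta}, whose hypothesis is exactly that $\RR^3\setminus K$ be connected. For the prescribed $\ep$ and $k$ this produces a Beltrami field on all of $\RR^3$ with the sharp fall-off at infinity; by the Herglotz characterization recalled above this field is of the form $U_f$ with $f\in L^2(\S,\CC^3)$ Hermitian and solving~\eqref{eqf}, and it satisfies $\|U_f-v\|_{C^k(K)}<\ep/2$. This is the step carrying the genuine PDE content, and the only place where the topological condition on $K$ is used.

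\emph{Step 2: representation by a density proportional to $p$.} Since $f\in L^2(\S,\CC^3)$ we have $\ep_{f,0}=0$, so applying Proposition~\ref{P.approx1} with $k=0$ I can choose a Hermitian finite linear combination of spherical harmonics $\vp$ for which $g:=f-\vp p\in L^2(\S,\CC^3)$ obeys $\sup_{R>0}\frac1R\int_{B_R}|U_g(x)|^2\,dx<\de$, with $\de>0$ as small as we please; here $U_g=U_f-U_{\vp p}$ by linearity of the map $f\mapsto U_f$.

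\emph{Step 3: upgrading to $C^k(K)$.} The crux is to pass from this global weighted bound to a pointwise local bound. Each component of $U_g$ is the Fourier transform of an $L^2$ density supported on the unit sphere, hence a monochromatic wave solving $\De U_g+U_g=0$ on $\RR^3$; note that $U_g$ need not itself be a Beltrami field, but only this Helmholtz equation is needed. Fixing $R_0$ with $K\subset B_{R_0/2}$, the global bound gives $\|U_g\|_{L^2(B_{R_0})}^2\le R_0\,\de$, and interior elliptic estimates for the operator $\De+1$ (bootstrapping from $L^2$ to $H^m$ on $B_{R_0/2}$ and applying Sobolev embedding with $m>k+\frac32$) yield $\|U_g\|_{C^k(K)}\le C(R_0,k)\,\|U_g\|_{L^2(B_{R_0})}\le C(R_0,k)\sqrt{R_0\de}$. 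Choosing $\de$ small enough that the right-hand side is below $\ep/2$, the triangle inequality $\|U_{\vp p}-v\|_{C^k(K)}\le\|U_{\vp p}-U_f\|_{C^k(K)}+\|U_f-v\|_{C^k(K)}<\ep$ finishes the argument. The serious obstacle is Step~1, which is available from~\cite{Annals,Acta}; Steps~2 and~3 are, respectively, a density argument and a routine regularity upgrade.
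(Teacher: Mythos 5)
Your proof is correct and follows essentially the same route as the paper's: the global approximation theorem with decay from~\cite{Acta} combined with Herglotz's theorem to write the approximant as $U_f$ with $f\in L^2(\S,\CC^3)$, then Proposition~\ref{P.approx1} (using $\ep_{f,0}=0$) to replace $f$ by $\vp\, p$ with $\vp$ a finite linear combination of spherical harmonics, and finally interior elliptic regularity for $\De+1$ to upgrade the $L^2$ bound to a $C^k(K)$ bound. The only, harmless, organizational difference is that you apply the elliptic estimates to $U_f-U_{\vp p}$, which solves the Helmholtz equation on all of $\RR^3$, whereas the paper applies them to $v-U_{\vp p}$ on an enlarged open set $V\supset K$ chosen so that $\RR^3\setminus\overline{V}$ is connected.
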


\begin{proof}
Let us fix an open set $V\supset K$ whose closure is contained in the open neighborhood where $v$ is defined, and a large ball $B_R\supset \overline V$. Since $\mathbb R^3\backslash K$ is connected, it is obvious that we can take $V$ so that $\mathbb R^3\backslash \overline V$ is connected as well. By the approximation theorem
with decay for Beltrami fields~\cite[Theorem 8.3]{Acta}, there is a Beltrami field~$w$ that
approximates~$v$ as
\[
\|w-v\|_{C^k(V)}<\ep
\]
and is bounded as $|w(x)|<C/|x|$. As the Fourier transform of~$w$ is
supported on~$\S$, Herglotz's theorem~\cite[Theorem 7.1.28]{Hor15}
shows that one can write $w=U_f$ for some vector-valued Hermitian field $f\in
L^2(\S,\CC^3)$ that satisfies the distributional equation~\eqref{eqf}. Proposition~\ref{P.approx1} then shows that there exists
some Hermitian scalar
function $\vp\in C^\infty(\S)$ such that
\[
\|U_f-U_{\vp p}\|_{L^2(B_R)}<C\ep\,,
\]
so that $\|v-U_{\vp p}\|_{L^2(V)}<C\ep$.
As the difference $v-U_{\vp p}$ satisfies the Helmholtz equation
\[
\De (v-U_{\vp p}) + v-U_{\vp p}=0
\]
in $V$, and $K\subset\!\subset V$, standard elliptic
estimates then allow us to promote this bound to
\[
\|v-U_{\vp p}\|_{C^k(K)}<C\ep\,,
\]
as we wished to prove.
\end{proof}

\section{Gaussian random Beltrami fields}
\label{S.random}

The Fourier-theoretical characterization of Beltrami fields presented
in the previous section paves the way to the definition of random
Beltrami fields.

In parallel with~\eqref{random1} (see Appendix~\ref{A.Fourier} for
further heuristics), let us start by setting
\[
\vp(\xi) := \sum_{l=0}^\infty\sum_{m=-l}^{l} i^l \, a_{\ell m}\,Y_{lm}(\xi)\,,
\]
where $a_{l m}$ are normally distributed independent
standard Gaussian random variables and $Y_{lm}$ is an orthonormal
basis of (real-valued) spherical harmonics on~$\S$. Note that $\vp$ is Hermitian
because of the identity $Y_{lm}(-\xi)=(-1)^ l Y_{lm}(\xi)$. We now
define a Gaussian random Beltrami field as
\[
u:= U_{\vp p}\,,
\]
where we recall that $U_f$ and~$p$ were respectively defined
in~\eqref{uf} and~\eqref{p(xi)}.

\begin{remark}
As discussed in Proposition~\ref{P.p(xi)}, the role of the vector field~$p$ is to ensure that the density
$f:=\vp p$ satisfies the Beltrami equation in Fourier space,
$i\xi\times f(\xi)=f(\xi)$. Hence one could replace $p(\xi)$ by any nonvanishing
multiple of it, that is, by $\widetilde p(\xi):= \La(\xi)\, p(\xi)$ where
$\La:\RR^3\to\CC$ is a smooth scalar Hermitian function that does
not vanish on~$\S$. All the results of the paper about random Beltrami
fields remain valid if one
defines a Gaussian random Beltrami field as $u:= U_{\vp \widetilde p}$
with~$\vp$ as above, provided that one replaces~$p$ by~$\widetilde p$
in the formulas. Also, the results do not change if one replaces the
basis of spherical harmonics by any other orthonormal basis
of~$L^2(\S)$, but this choice leads to slightly more explicit formulas
for certain intermediate objects that appear in the proofs.
\end{remark}

In what follows, we will use the notation $D:= -i\nabla$. An important
role will be played by the
vector-valued differential
operator with real coefficients $p(D)$, whose expression in Fourier space is
\[
\widehat{p(D)\psi}(\xi)=p(\xi)\, \widehat\psi(\xi)\,,
\]
for any scalar function $\psi$ in $\mathbb R^3$. Equivalently, by Remark~\ref{R.cucu}, the operator $p(D)$ reads, in physical space, as
\[
p(D)\psi=-(\curl \curl+\curl)(q(D_1)\psi,0,0)\,,
\]
where $D_1:= -i\pd_{x_1}$.

The first result of this section shows that a Gaussian random Beltrami field is a well defined object both in Fourier and physical spaces:
\begin{proposition}\label{P.conv}
  With probability~$1$, the function $\vp$ is in
  $H^{-1-\de}(\S)\backslash L^2(\S)$ for any $\de>0$. In particular,
  almost surely, $u$ is a $C^\infty$~vector field and can be written as
  \begin{equation}\label{seriesu}
u(x)=(2\pi)^{\frac{3}{2}}\sum_{l=0}^\infty\sum_{m=-l}^{l}  \, a_{\ell
  m}\, p(D)\left[Y_{lm} \left(\frac{x}{|x|}\right) \frac{J_{\ell + \frac12}(|x|)}{|x|^{1/2}}\right]\,.
\end{equation}
The series converges in~$C^k$ uniformly on compact sets almost surely,
for any~$k$.
\end{proposition}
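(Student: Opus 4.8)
The plan is to establish the claim in two stages: first the regularity statement for the random density~$\vp$, and then the consequences for the vector field~$u = U_{\vp p}$ together with the series expansion.

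For the first stage, I would compute the expected squared $H^s(\S)$~norm of the partial sums of~$\vp$ and use the orthonormality of the $Y_{lm}$. Writing $\vp = \sum_{l,m} i^l a_{lm} Y_{lm}$, the $H^s$~norm formula stated in the excerpt gives $\|\vp\|_{H^s(\S)}^2 = \sum_{l,m} (l+1)^{2s} |a_{lm}|^2$, so that
\[
\bE\,\|\vp\|_{H^s(\S)}^2 = \sum_{l=0}^\infty (2l+1)(l+1)^{2s}\,,
\]
since each $a_{lm}$ is a standard Gaussian with $\bE|a_{lm}|^2 = 1$ and there are $2l+1$ values of~$m$. This series converges precisely when $2s + 1 < -1$, i.e. when $s < -1$, so for any $\de>0$ we have $\bE\,\|\vp\|_{H^{-1-\de}(\S)}^2 < \infty$, whence $\vp \in H^{-1-\de}(\S)$ almost surely. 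For the failure of $L^2$~membership, I would invoke the Kolmogorov three-series theorem (or a direct Borel--Cantelli argument on the tail of $\sum_{l,m}|a_{lm}|^2$): since $\sum_{l,m} \bE|a_{lm}|^2 = \sum_l (2l+1) = \infty$ and the summands are independent nonnegative random variables, the sum $\sum_{l,m}|a_{lm}|^2 = \|\vp\|_{L^2(\S)}^2$ diverges almost surely, so $\vp \notin L^2(\S)$ with probability~$1$.

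For the second stage, I would fix a realization with $\vp \in H^{-1-\de}(\S)$ and show that $u = U_{\vp p}$ is smooth with the asserted expansion. Since $p$ is a smooth (polynomial) vector field on~$\S$, multiplication by~$p$ preserves the Sobolev scale, so $\vp p \in H^{-1-\de}(\S,\CC^3)$; the field $U_{\vp p}$ is then well defined by the bound~\eqref{bounduu} with $k=2$. To obtain smoothness and the series, I would start from the scalar representation: for a single spherical harmonic $Y_{lm}$, the Fourier transform of the measure $i^l Y_{lm}\,d\si$ is, by the Funk--Hecke/plane-wave expansion (the same computation underlying~\eqref{random2}), the function $(2\pi)^{3/2} Y_{lm}(x/|x|)\, J_{l+1/2}(|x|)/|x|^{1/2}$. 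Applying the differential operator $p(D)$, whose Fourier symbol is exactly $p(\xi)$, to this scalar monochromatic wave produces the $(l,m)$~term in~\eqref{seriesu}. Summing over $(l,m)$ against the Gaussian coefficients $a_{lm}$ yields the claimed formula, at least formally.

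The main obstacle is the convergence of the series~\eqref{seriesu} in $C^k$ uniformly on compact sets. The key quantitative input is the decay of the building blocks: on a fixed ball $B_R$, the scalar wave $Y_{lm}(x/|x|)\,J_{l+1/2}(|x|)/|x|^{1/2}$ and its derivatives up to order~$k$ are controlled, using standard bounds for Bessel functions of large order together with $\|Y_{lm}\|_{C^j(\S)} \lesssim (l+1)^{j+1}$, by a factor that decays faster than any negative power of~$l$ once $|x| \le R$ is fixed — this is the classical super-exponential smallness of $J_{l+1/2}(r)$ for $l \gg r$. Since $p(D)$ raises the polynomial order in~$\xi$ by a bounded amount, the same super-polynomial decay survives after applying $p(D)$. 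The Gaussian coefficients grow at most like $\sup_{l,m}|a_{lm}| = O((\log l)^{1/2})$ almost surely (again by Borel--Cantelli on the Gaussian tails over the $\sim l^2$ coefficients of degree~$l$), which is harmless against super-polynomial decay. I would therefore bound the $C^k(B_R)$~norm of the tail $\sum_{l>L_0}$ by $\sum_{l>L_0} (2l+1)\,|a_{lm}|_{\max}\cdot(\text{super-polynomially small in } l)$ and conclude that the partial sums form a Cauchy sequence in $C^k(B_R)$ almost surely. Since $R$ and~$k$ are arbitrary, this gives uniform $C^k$~convergence on compact sets and, in particular, that $u \in C^\infty$ almost surely, completing the proof.
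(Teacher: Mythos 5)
Your proposal is correct and follows essentially the same route as the paper's proof: the same expected-norm computation giving $\vp\in H^{-1-\de}(\S)$ almost surely, the same Funk--Hecke/plane-wave identity combined with the symbol property of $p(D)$ to produce the series~\eqref{seriesu}, and the same mechanism for the $C^k$ convergence, namely that the super-exponential decay of $J_{l+1/2}$ on compact sets beats the polynomial growth of $\|Y_{lm}\|_{C^j}$ and of the coefficients. The only differences are which standard probabilistic lemmas you plug in: for the $L^2$ divergence the paper applies the strong law of large numbers to the i.i.d.\ variables $a_{lm}^2$, where you invoke the three-series theorem or Borel--Cantelli, and for the tail of the series the paper splits by Cauchy--Schwarz against the almost surely finite weighted sum $\sum_{l,m} a_{lm}^2/(l+1)^{2+2\de}$ (already available from the $H^{-1-\de}$ step), where you instead use the almost sure bound $\max_m |a_{lm}|=O(\sqrt{\log l})$; both devices are interchangeable here.

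One caution on your $L^2$ step: as literally phrased, the principle ``independent nonnegative summands with divergent sum of means implies almost sure divergence'' is false in general --- take $X_i=2^i$ with probability $2^{-i}$ and $X_i=0$ otherwise; then $\sum_i \bE X_i=\infty$ but only finitely many $X_i$ are nonzero almost surely, so the sum converges. What makes your conclusion correct is that the $a_{lm}^2$ are \emph{identically} distributed, so $\bP\{a_{lm}^2>1\}$ is a fixed positive constant; the second Borel--Cantelli lemma (or the divergence of $\sum\bP\{X_i>1\}$ in the three-series criterion) then forces $\sum_{l,m}a_{lm}^2=\infty$ almost surely. Since you do cite these tools, the argument closes, but the justification should rest on the identical distribution rather than on the divergence of the sum of expectations.
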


\begin{proof}
  For $l\geq0$ and $-l\leq m\leq l$, $a_{lm}^2$ are independent,
  identically distributed random variables with expected value~1. As the number
  of these variables with $l\leq n$ is
  \[
\sum_{l=0}^n\sum_{m=-l}^{l}1=(n+1)^2\,,
\]
the strong law of large numbers ensures that the sample average, i.e.,
the random variable
\[
X_n:=\frac1{(n+1)^2}\sum_{l=0}^n\sum_{m=-l}^{l}a_{\ell m}^2\,,
\]
converges to~1 almost surely as $n\to\infty$. Now consider the truncation
  \[
\vp_n(\xi) := \sum_{l=0}^n\sum_{m=-l}^{l} i^l \, a_{\ell m}\,Y_{lm}(\xi)\,.
  \]
  As the spherical harmonics $Y_{lm}$ are orthonormal, the $L^2$~norm of $\vp_n$ is
  \[
\|\vp_n\|_{L^2(\S)}^2=\sum_{l=0}^n\sum_{m=-l}^{l}a_{\ell m}^2= (n+1)^2 X_n\,,
  \]
and $\|\vp_n\|_{L^2(\S)}^2$ tends to~$\|\vp\|_{L^2(\S)}^2$ (which may
be infinite) as $n\to\infty$. Since
  $X_n\to 1$ almost surely, we obtain from the above formula that $(n+1)^{-2}\|\vp_n\|_{L^2(\S)}^2$
tends to~1 almost surely. Therefore, $\vp$ is not in $L^2(\S)$ with
probability~1.

On the other hand, since
\[
\|\vp\|_{H^{-s}(\S)}^2=\sum_{l=0}^\infty\sum_{m=-l}^{l}\frac{a_{\ell m}^2}{(l+1)^{2s}}\,,
\]
it is straightforward to see that the expected value
\[
\bE\|\vp\|_{H^{-1-\de}(\S)}^2=\sum_{l=0}^\infty\sum_{m=-l}^{l}\frac{\bE a_{\ell
    m}^2}{(l+1)^{2+2\de}}= \sum_{l=0}^\infty\frac{2l+1}{(l+1)^{2+2\de}}
\]
is finite for all~$\de>0$. Hence $\vp\in H^{-1-\de}(\S)$ almost
surely, so $u:= U_{\vp p}$ is well defined with probability~1.

To prove the representation formula for~$u$ and its convergence, let
us begin by noting that
\begin{align*}
U_{i^l Y_{lm} p}(x)&= \int_\S i^l p(\xi) \,Y_{lm}(\xi)\, e^{i\xi\cdot x}\,
                   d\si(\xi)\\
  &=  p(D)\int_\S i^l  Y_{lm}(\xi)\, e^{i\xi\cdot x}\,
                   d\si(\xi)\,.
\end{align*}
Using either the theory of point pair invariants and zonal spherical
functions~\cite[Proposition~4]{CS19} or special function identities~\cite[Proposition~2.1]{random}, the Fourier transform of $Y_{lm}\, d\si$ has been shown to be
\[
\int_\S i^l  Y_{lm}(\xi)\, e^{i\xi\cdot x}\,
                   d\si(\xi)= (2\pi)^{\frac32} Y_{lm} \left(\frac{x}{|x|}\right) \frac{J_{\ell + \frac12}(|x|)}{|x|^{1/2}}\,.
\]
This permits to formally write~$u$ as~\eqref{seriesu}. To show that
this series converges in~$C^k$ on compact sets, for any large~$n$, any
$N>n$ and any fixed positive integer~$k$ consider the quantity
\[
q_{n,N}(x):= \sum_{|\al|\leq k} \left|\sum_{l=n}^N \sum_{m=-l}^l a_{lm} D^\al p(D)\left[Y_{lm} \left(\frac{x}{|x|}\right) \frac{J_{\ell + \frac12}(|x|)}{|x|^{1/2}}\right]\right|\,,
\]
where we are using the standard multiindex notation.
Since $p(D)$ is a third-order operator, for all $|x|<R$ we obviously have
\begin{align*}
q_{n,N}&(x)\leq C_k \sum_{l=n}^N \sum_{m=-l}^l
|a_{lm}|\|Y_{lm}\|_{C^{k+3}(\S)}\left\|
  \frac{J_{\ell + \frac12}(r)}{r^{1/2}}\right\|_{C^{k+3}((0,R))}\\
  &\leq C_k \left(\sum_{l=n}^N \sum_{m=-l}^l
\frac{a_{lm}^2}{(l+1)^{2+2\de}}\right)^{\frac12}\left(\sum_{l=n}^N
    \sum_{m=-l}^l(l+1)^{2+2\de} \|Y_{lm}\|_{C^{k+3}(\S)}^2 \left\|
  \frac{J_{\ell + \frac12}(r)}{r^{1/2}}\right\|^2_{C^{k+3}((0,R))}\right)^{\frac12}
\end{align*}
where here $r:=|x|$ and we have used the Cauchy--Schwartz inequality
to pass to the second line. The Sobolev inequality immediately gives
\[
\|Y_{lm}\|_{C^{k+3}(\S)}\leq C\|Y_{lm}\|_{H^{k+5}(\S)}\leq C(l+1)^{k+5}\,.
\]
To estimate the Bessel function, recall the large-degree asymptotics
\[
J_\nu(r)\sim (2\pi\nu)^{-\frac12} \left(\frac{er}{2\nu}\right)^\nu\,,
\]
which holds as $\nu\to\infty$ for uniformly bounded~$r$. As the derivative of a Bessel function can be
written in terms of Bessel functions via the recurrence relation
\[
\frac d{dr}J_\nu(r)= -J_{\nu+1}(r) + \frac\nu r J_\nu(r)\,,
\]
it follows that the $C^{k+3}$~norm of $J_{l+\frac12}(r)/r^{1/2}$ tends
to~0 exponentially as~$l\to\infty$ on compact sets:
\[
\left\|
  \frac{J_{\ell + \frac12}(r)}{r^{1/2}}\right\|_{C^{k+3}((0,R))}\leq \left(\frac{CR}{l}\right)^{l-k-3}\,.
\]
Since we have proven that
\[
\sum_{l=0}^\infty\sum_{m=-l}^l
\frac{a_{lm}^2}{(l+1)^{2+2\de}}<\infty
\]
almost surely, now one only has to put together the estimates above to
see that, almost surely, $q_{n,N}(x)$ tends to~0 as
$n\to\infty$ uniformly for all $N>n$ and for all~$x$ in a compact
subset of~$\RR^3$. This establishes the convergence of the series and
completes the proof of the proposition.
\end{proof}

\begin{remark}
Note that each summand $U_{i^l Y_{lm}p}=(2\pi)^{3/2}p(D)[Y_{lm} (\frac{x}{|x|})
 |x|^{-1/2} {J_{\ell + \frac12}(|x|)}]$ of the
series~\eqref{seriesu} is a Beltrami field.
\end{remark}

Since $a_{lm}$ are standard Gaussian variables, it is obvious that the
vector-valued Gaussian field~$u$ has zero mean. Our next goal is to
compute its covariance kernel, $\ka$, which maps each pair of points
$(x,y)\in\RR^3\times\RR^3$ to the symmetric $3\times 3$~matrix
\begin{equation}\label{kappa}
\ka(x,y):=\bE[u(x)\otimes u(y)]\,.
\end{equation}
In particular, we show that this kernel is translationally invariant,
meaning that it only depends on the difference:
\[
  \ka(x,y)=\vka(x-y)\,.
\]
We recall that, by Bochner's theorem, there exists a
nonnegative-definite matrix-valued measure
$\rho$ such that $\vka$ is the Fourier transform of~$\rho$: this is
the spectral measure of the Gaussian random field~$u$. In the
statement, $p_j$ is the $j^{\text{th}}$ component of the vector
field~$p$.

\begin{proposition}\label{P.kernel}
The components of the covariance kernel of the Gaussian random
field~$u$ are
\[
\ka_{jk}(x,y)= \vka_{jk}(x-y)
\]
with
\[
\vka_{jk}(x):= (2\pi)^{\frac32} p_j(D) p_k(-D)\frac{J_{1/2}(|x|)}{|x|^{1/2}}\,.
\]
The spectral measure is $d\rho(\xi)= p(\xi)\otimes
\overline{p(\xi)}\, d\si(\xi)$.
\end{proposition}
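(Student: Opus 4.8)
The plan is to compute the covariance directly from the almost surely convergent series representation~\eqref{seriesu}. Write $e_{lm}:=U_{i^lY_{lm}p}=(2\pi)^{3/2}p(D)[Y_{lm}(x/|x|)\,J_{l+\frac12}(|x|)\,|x|^{-1/2}]$ for the summands appearing in~\eqref{seriesu}; each $e_{lm}$ is a real-valued Beltrami field (as noted after~\eqref{seriesu}, and because $i^lY_{lm}p$ is Hermitian), so that $u=\sum_{l,m}a_{lm}e_{lm}$ with the $a_{lm}$ independent standard Gaussians. First I would fix $x,y$ and observe that the partial sums $\sum_{l\leq N,m}a_{lm}e_{lm,j}(x)$ converge to $u_j(x)$ both almost surely, by Proposition~\ref{P.conv}, and in $L^2(\Omega)$; the latter holds because the components $e_{lm,j}(x)$ are orthogonal in $\Omega$ with $\sum_{l,m}e_{lm,j}(x)^2=\int_\S|p_j(\xi)|^2\,d\sigma(\xi)<\infty$ (a Parseval identity for the basis $\{Y_{lm}\}$), which is finite since $p_j$ is a polynomial. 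This uniform second-moment bound lets me pass covariances to the limit and obtain, using $\bE[a_{lm}a_{l'm'}]=\delta_{ll'}\delta_{mm'}$, the identity $\ka_{jk}(x,y)=\sum_{l,m}e_{lm,j}(x)\,e_{lm,k}(y)$.

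The key algebraic step is to collapse this sum to an integral over the sphere by completeness of $\{Y_{lm}\}$. Using $e_{lm,j}(x)=i^l\int_\S Y_{lm}(\xi)\,p_j(\xi)\,e^{i\xi\cdot x}\,d\sigma(\xi)$ together with the facts that $e_{lm}$ is real-valued and $p$ is Hermitian (so $\overline{p_k(\xi)}=p_k(-\xi)$), I would rewrite $e_{lm,k}(y)=\overline{e_{lm,k}(y)}=(-i)^l\int_\S Y_{lm}(\xi)\,\overline{p_k(\xi)}\,e^{-i\xi\cdot y}\,d\sigma(\xi)$. The product of the prefactors is $i^l(-i)^l=1$ --- this is precisely why the phase $i^l$ was inserted in the definition of $\varphi$, as it cancels the $(-1)^l$ coming from the Hermitian symmetry. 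Hence the summand equals $\widehat{A}_{lm}\,\widehat{C}_{lm}$, where $A(\xi)=p_j(\xi)e^{i\xi\cdot x}$, $C(\xi)=\overline{p_k(\xi)}e^{-i\xi\cdot y}$ and the hats denote the coefficients $\int_\S Y_{lm}(\cdot)\,d\sigma$. Parseval's identity for the real orthonormal basis $\{Y_{lm}\}$ then gives $\ka_{jk}(x,y)=\int_\S A(\xi)C(\xi)\,d\sigma(\xi)=\int_\S p_j(\xi)\,\overline{p_k(\xi)}\,e^{i\xi\cdot(x-y)}\,d\sigma(\xi)$. This is manifestly a function of $x-y$ alone, and it is the Fourier transform of the matrix-valued measure $p(\xi)\otimes\overline{p(\xi)}\,d\sigma(\xi)$; this simultaneously proves translation invariance and identifies the spectral measure $\rho$, which is nonnegative-definite since pointwise $p\otimes\overline p$ is a rank-one nonnegative Hermitian matrix.

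It then remains to recast $\varkappa_{jk}(z)=\int_\S p_j(\xi)\,\overline{p_k(\xi)}\,e^{i\xi\cdot z}\,d\sigma(\xi)$ in the stated operator form. Since $D_ze^{i\xi\cdot z}=\xi\,e^{i\xi\cdot z}$ and $\overline{p_k(\xi)}=p_k(-\xi)$, I have $p_j(D)p_k(-D)e^{i\xi\cdot z}=p_j(\xi)\,\overline{p_k(\xi)}\,e^{i\xi\cdot z}$; pulling the constant-coefficient operator outside the integral yields $\varkappa_{jk}(z)=p_j(D)p_k(-D)\int_\S e^{i\xi\cdot z}\,d\sigma(\xi)$. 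Finally I would invoke the classical evaluation of the Fourier transform of the surface measure of $\S$, namely $\int_\S e^{i\xi\cdot z}\,d\sigma(\xi)=4\pi\,\frac{\sin|z|}{|z|}=(2\pi)^{3/2}\,\frac{J_{1/2}(|z|)}{|z|^{1/2}}$, using $J_{1/2}(r)=\sqrt{2/(\pi r)}\,\sin r$; this produces exactly the claimed formula for $\varkappa_{jk}$.

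The step I expect to demand the most care is the interchange of expectation with the infinite summation, i.e.\ the passage to $\ka_{jk}(x,y)=\sum_{l,m}e_{lm,j}(x)\,e_{lm,k}(y)$. Everything downstream is exact algebra and a standard special-function computation, but this interchange rests on the $L^2(\Omega)$-convergence of the Gaussian partial sums, which I would justify by combining the almost sure $C^k$-convergence on compact sets from Proposition~\ref{P.conv} with the uniform second-moment bound furnished by the finiteness of $\int_\S|p_j|^2\,d\sigma$.
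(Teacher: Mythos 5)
Your proposal is correct and follows essentially the same route as the paper: expand the covariance as a double sum over spherical-harmonic coefficients, use independence of the $a_{lm}$ to diagonalize it, collapse the sum via the (bilinear) Parseval identity for the real basis $\{Y_{lm}\}$ to get $\ka_{jk}(x,y)=\int_\S p_j(\xi)\,\overline{p_k(\xi)}\,e^{i\xi\cdot(x-y)}\,d\si(\xi)$, and then pull out $p_j(D)p_k(-D)$ and evaluate the Fourier transform of $d\si$ as the $J_{1/2}$ expression. The only difference is that you explicitly justify interchanging expectation with the infinite sum via $L^2(\Omega)$ convergence, a point the paper's computation passes over silently; this is a welcome extra bit of rigor, not a different method.
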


\begin{proof}
As $a_{lm}$ are independent standard Gaussian variables,
$\bE(a_{lm}a_{l'm'})=\de_{ll'}\de_{mm'}$, so the covariance matrix
is
\begin{align*}
  \ka_{jk}&(x,y)=  \bE[u_j(x) u_k(y)]=\bE[u_j(x) \overline{u_k(y)}]\\
  &= \sum_{l=0}^\infty\sum_{m=-l}^l
    \sum_{l'=0}^\infty\sum_{m=-l'}^{l'}
    i^{l-l'}\bE(a_{lm}a_{l'm'})\int_\S\int_\S e^{ix\cdot \xi-iy\cdot
    \eta} p_j(\xi)\, \overline{p_k(\eta)}\,
    Y_{lm}(\xi)\,{Y_{l'm'}(\eta)}\, d\si(\xi)\, d\si(\eta)\\
  &= \sum_{l=0}^\infty\sum_{m=-l}^l \int_\S\int_\S e^{ix\cdot \xi-iy\cdot
    \eta} p_j(\xi)\, \overline{p_k(\eta)}\,
    Y_{lm}(\xi)\,{Y_{lm}(\eta)}\, d\si(\xi)\, d\si(\eta)\,.
\end{align*}
Here we have used that $u$ and the spherical harmonics $Y_{lm}$ are real-valued. Since $Y_{lm}$ is an
orthonormal basis, one has that
\[
\sum_{l=0}^\infty\sum_{m=-l}^l  \int_\S\int_\S
\psi(\xi)\,\phi(\eta)\, Y_{lm}(\xi)\, Y_{lm}(\eta)\,
d\si(\xi)\, d\si(\eta)=\int_\S
\psi(\xi)\,\phi(\xi)\, d\si(\xi)
\]
for any functions $\psi,\phi\in L^2(\S)$. Hence we can get rid of the sums in the above formula and write
\begin{align}\label{formulakappa}
  \ka_{jk}(x,y)& = \int_\S e^{i(x-y)\cdot\xi}\,p_j(\xi)\, \overline{p_k(\xi)}
                 \, d\si(\xi)\,,
\end{align}
which yields the formula for the spectral measure of~$u$.
Using now that $p$ is Hermitian (i.e., $\overline{ p(\xi)}=p(-\xi)$)
and a well-known representation formula for the Bessel function
$J_{1/2}$, the above integral can be equivalently written as
\begin{align*}
  \int_\S e^{ix\cdot\xi}\, p_j(\xi)\, \overline{p_k(\xi)}\, d\si(\xi)&= p_j(D)\, {p_k(-D)}\int_\S e^{ix\cdot\xi} d\si(\xi)\\
&=  (2\pi)^{\frac32}p_j(D)\, {p_k(-D)}\frac{J_{1/2}(|x|)}{|x|^{1/2}}\,.
\end{align*}
The proposition then follows.
\end{proof}

A straightforward corollary is that the Gaussian random Beltrami
field~$u$ is normalized so that its covariance matrix is the identity
on the diagonal:

\begin{corollary}\label{C.diag}
  For any~$x\in\RR^3$, $\ka(x,x)=I$.
\end{corollary}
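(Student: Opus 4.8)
The plan is to read off $\ka(x,x)$ directly from the integral representation~\eqref{formulakappa} established in Proposition~\ref{P.kernel}. Setting $y=x$ there, the oscillatory factor $e^{i(x-y)\cdot\xi}$ becomes~$1$, so that
\[
\ka_{jk}(x,x)=\int_\S p_j(\xi)\,\overline{p_k(\xi)}\,d\si(\xi)\,,
\]
which is manifestly independent of~$x$ (consistent with its being $\vka$ evaluated at the origin). It therefore suffices to show that this matrix of spherical integrals is the identity, i.e.\ that $\int_\S p_j\,\overline{p_k}\,d\si=\de_{jk}$. Throughout I will use that $|q(\xi_1)|^2=\frac{15}{64\pi}(1+7\xi_1^2)$, which follows directly from the definition of~$q$ in~\eqref{p(xi)}.

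First I would dispose of the off-diagonal entries by a parity argument. Since $|q(\xi_1)|^2$ is an even polynomial in~$\xi_1$, each off-diagonal integrand $|q(\xi_1)|^2\,(\cdots)\overline{(\cdots)}$ expands into a sum of monomials each of which contains an odd power of some coordinate~$\xi_i$: for $\ka_{12}$ and $\ka_{13}$ every term is odd in~$\xi_1$ or in~$\xi_3$ (respectively~$\xi_2$), while for $\ka_{23}$ every term is odd in~$\xi_2$ or in~$\xi_3$. As $\S$ is invariant under each reflection $\xi_i\mapsto-\xi_i$, all these integrals vanish, giving $\ka_{jk}(x,x)=0$ for $j\neq k$.

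For the diagonal entries I would compute the three integrals using the standard sphere moments, where $\langle\,\cdot\,\rangle:=\frac1{4\pi}\int_\S\,\cdot\,d\si$ denotes the normalized average,
\[
\big\langle \xi_1^{2a}\xi_2^{2b}\xi_3^{2c}\big\rangle=\frac{(2a-1)!!\,(2b-1)!!\,(2c-1)!!}{(2a+2b+2c+1)!!}\,.
\]
For $\ka_{11}$ the integrand is $\frac{15}{64\pi}(1+7\xi_1^2)(\xi_1^2-1)^2$, and inserting $\langle\xi_1^2\rangle=\tfrac13$, $\langle\xi_1^4\rangle=\tfrac15$, $\langle\xi_1^6\rangle=\tfrac17$ reduces the bracket to $\tfrac{16}{15}$, whence $\ka_{11}(x,x)=\frac{15}{64\pi}\cdot4\pi\cdot\tfrac{16}{15}=1$. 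For $\ka_{22}$ and $\ka_{33}$ the relevant bracket is $(1+7\xi_1^2)(\xi_1^2\xi_2^2+\xi_3^2)$ and its $\xi_2\leftrightarrow\xi_3$ analogue, and the mixed moments $\langle\xi_1^2\xi_2^2\rangle=\tfrac1{15}$ and $\langle\xi_1^4\xi_2^2\rangle=\tfrac1{35}$ again yield~$1$. The prefactor $\frac{15}{64\pi}$ built into~$q$ is exactly what normalizes these three integrals to unity, as anticipated in the discussion following~\eqref{p(xi)}.

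There is no genuine obstacle here: the statement is a direct verification, the only points requiring care being the correct value of $|q(\xi_1)|^2$ and the bookkeeping of the diagonal moment computation. The conceptual content is that the vanishing of the off-diagonal entries is forced by reflection symmetry, while the diagonal normalization has been engineered into the definition of~$p$.
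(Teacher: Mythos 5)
Your proposal is correct and follows essentially the same route as the paper: both reduce to $\ka_{jk}(x,x)=\int_\S p_j\,\overline{p_k}\,d\si$ via Proposition~\ref{P.kernel} and then evaluate the monomial moments (your double-factorial formula is just the normalized form of the Folland formula~\eqref{Folland} the paper cites), with your write-up simply carrying out the ``tedious but straightforward'' computation the paper omits. One small bookkeeping slip: in the $\ka_{23}$ integrand the cross terms $\xi_1\xi_2^2$ and $\xi_1\xi_3^2$ are odd in $\xi_1$, not in $\xi_2$ or $\xi_3$, but every monomial still carries an odd power of some coordinate, so the reflection argument and the conclusion are unaffected.
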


\begin{proof}
The formula for the spectral measure computed in
Proposition~\ref{P.kernel} implies that
\[
\ka_{jk}(x,x)=\int_\S  p_j(\xi)\, \overline{p_k(\xi)}\, d\si(\xi)\,.
\]
As $p$ is a polynomial, the computation then boils down to evaluating integrals of the form
$\int_\S \xi^\al\, d\si(\xi)$, where $\al=(\al_1,\al_2,\al_3)$ is a
multiindex and
$\xi^\al:=\xi_1^{\al_1}\xi_2^{\al_2}\xi_3^{\al_3}$. These integrals
can be computed in closed form~\cite{Fol01}:
\begin{equation}\label{Folland}
  \int_\S \xi^\al\, d\si(\xi)= \begin{cases}
2\big[\prod_{j=1}^3\Gamma(\frac{\al_j+1}2)\big] /\Gamma(\frac{|\al|+3}2) & \text{if $\al_1,\al_2,\al_3$
  are even,}\\[1mm]
0 & \text{otherwise.}
  \end{cases}
\end{equation}
Here $\Ga$ denotes the Gamma function.

Armed with this formula and taking into account the explicit
expression of the polynomial $p(\xi)$ (cf.~Equation~\eqref{p(xi)}),
a tedious but straightforward computation shows
\[
\int_\S  p_j(\xi)\, \overline{p_k(\xi)}\, d\si(\xi)=\de_{jk}\,.
\]
The result then follows.
\end{proof}

\begin{remark}\label{R.rho}
The probability density function of the Gaussian
random vector~$u(x)$ is therefore $\rho(y):= (2\pi)^{-\frac32}\,
e^{-\frac12|y|^2}$. That is, $\bP\{u(x)\in\Omega\}=
\int_\Omega\rho(y)\, dy $ for any $x\in\RR^3$ and any Borel subset~$\Omega\subset\RR^3$.
\end{remark}

Since the Gaussian field~$u$ is of class~$C^\infty$ with
probability~1 by Proposition~\ref{P.conv}, it is standard that it defines a Gaussian probability
measure, which we henceforth denote by~$\bPu$, on the space
of~$C^k$~vector fields on~$\RR^3$, where $k$ is any fixed positive integer.
This space is endowed with its usual Borel $\si$-algebra~$\fS$,
which is the minimal $\si$-algebra containing the ``squares''
\[
  I(x,a,b):=
\{w\in C^k(\RR^3,\RR^3): w(x)\in[a_1,b_1)\times
[a_2,b_2)\times[a_3,b_3)\}
\]
for all $x,a,b,\in\RR^3$. To spell out the details, let us denote
by~$\Omega$ the sample space of the random variables $a_{lm}$ and show
that the random field~$u$ is a measurable map from~$\Omega$ to~$C^k(\RR^3,\RR^3)$. Since the
$\si$-algebra of~$C^k(\RR^3,\RR^3)$ is generated by point evaluations,
it suffices to show that
\[
u(x)= \sum_{l=0}^\infty \sum_{m=-l}^l a_{lm}\, U_{i^l Y_{lm}p}(x)
\]
is a measurable function
$\Omega\to\RR^3$ for each~$x\in\RR^3$. But this is obvious because $u(x)$ is
the limit of finite linear combinations (with coefficients in~$\RR^3$)
of the random variables~$a_{lm}$, which are of course measurable. In what follows, we will not
mention the $\si$-algebra explicitly to keep the notation simple. Also, in view of the later applications to invariant
tori, we will henceforth assume that~$k\geq4$.

Following Nazarov and Sodin~\cite{NS16}, the next proposition shows that
from the facts that the
covariance kernel $\ka(x,y)$ only depends on~$x-y$ and that the
spectral measure has no atoms one can infer two useful properties of
our Gaussian probability measure that
will be extensively employ in the rest of the paper. Before stating
the result, let us recall that the probability
measure~$\bPu$ is said to be {translationally~invariant} if $\bPu(\tau_y \cA)=\bPu(\cA)$
for all~$\cA\subset\fS$ and all~$y\in \RR^3$. Here $\tau_y$~denotes
the translation operator on $C^k$~fields, defined as $\tau_y w(x):=
w(x+y)$.

\begin{proposition}\label{P.ergodic}
The probability measure~$\bPu$ is translationally
invariant. Furthermore, if~$\Phi$ is an $L^1$~random variable on the probability space
$(C^k(\RR^3,\RR^3),\fS,\bPu)$ , then
\[
\lim_{R\to\infty}\Bint_{B_R} \Phi\circ\tau_y\, dy = \bE\Phi
\]
both $\bPu$-almost surely and in $L^1(C^k(\RR^3,\RR^3),\bPu)$.
\end{proposition}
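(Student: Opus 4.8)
The plan is to establish the two claimed properties---translational invariance and the ergodic (pointwise and $L^1$) theorem---by following the strategy of Nazarov and Sodin, the key point being that both properties are consequences of structural features of the Gaussian field $u$ that we have already computed: the covariance kernel depends only on $x-y$ (Proposition~\ref{P.kernel}) and the spectral measure $d\rho(\xi)=p(\xi)\otimes\overline{p(\xi)}\,d\si(\xi)$ has no atoms (it is absolutely continuous with respect to the surface measure on $\S$).

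First I would prove translational invariance. A centered Gaussian measure on a space of $C^k$ fields is uniquely determined by its covariance kernel, so it suffices to check that the pushed-forward field $\tau_y u$ has the same covariance as $u$. But $\bE[(\tau_y u)(x)\otimes(\tau_y u)(x')]=\ka(x+y,x'+y)=\vka(x-x')=\ka(x,x')$ precisely because the kernel is translationally invariant, which we verified in Proposition~\ref{P.kernel}. Since the generating squares $I(x,a,b)$ are mapped among themselves under $\tau_y$ and the two Gaussian measures agree on them, they agree on the whole $\si$-algebra~$\fS$; this gives $\bPu(\tau_y\cA)=\bPu(\cA)$.

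The substantive part is the ergodic statement, for which the natural route is to realize the $\RR^3$-action $\{\tau_y\}_{y\in\RR^3}$ on the probability space $(C^k(\RR^3,\RR^3),\fS,\bPu)$ as a measure-preserving flow and to invoke the Wiener ergodic theorem (the multiparameter pointwise ergodic theorem for $\RR^3$-actions). The displayed conclusion is exactly the assertion that the Birkhoff--Wiener averages $\Bint_{B_R}\Phi\circ\tau_y\,dy$ converge to $\bE\Phi$ both almost surely and in $L^1$, which holds once one knows that the action is \emph{ergodic}: an ergodic action forces the limit, which is a priori the conditional expectation onto the invariant $\si$-algebra, to equal the constant $\bE\Phi$. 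The hard part will therefore be proving ergodicity of the translation action, and this is where the absence of atoms in the spectral measure is decisive. The standard argument is that for a stationary Gaussian field ergodicity (indeed mixing) of the translation flow is equivalent to the spectral measure having no atoms; one shows mixing by verifying that the covariance $\vka(x)\to0$ as $|x|\to\infty$, which follows from the Riemann--Lebesgue lemma applied to the absolutely continuous measure $d\rho$, and then uses that for Gaussian systems decay of correlations upgrades to genuine mixing via the factorization of higher moments into pairings (Wick's formula), so that cylinder events decorrelate under translation. Mixing implies ergodicity, and ergodicity supplies the constant limit in Wiener's theorem.

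In carrying this out I would take the following steps in order: (i) record that $\bPu$ is $\tau$-invariant, as above, so that $\{\tau_y\}$ is a measure-preserving $\RR^3$-action; (ii) prove $\vka(x)\to0$ as $|x|\to\infty$ from the Riemann--Lebesgue lemma, using that $d\rho$ is absolutely continuous on $\S$; (iii) deduce mixing, hence ergodicity, of the action by reducing to a dense class of cylinder test functions and applying the Gaussian moment factorization to see that correlations of such functions under $\tau_y$ tend to the product of expectations as $|y|\to\infty$; and (iv) apply the $\RR^3$ pointwise ergodic theorem over the balls $B_R$ (a Følner sequence) to conclude that the averages converge almost surely and in $L^1$ to $\bE\Phi$. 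The main obstacle is step~(iii): one must be careful that the field's values are vector-valued and that the relevant $L^1$ functional $\Phi$ need not be a finite-dimensional cylinder function, so the reduction to a dense subclass and the passage to the limit require the standard approximation argument (approximating $\Phi$ in $L^1(\bPu)$ by bounded functions depending on finitely many point evaluations, on which mixing is transparent via Wick's theorem), after which a routine density-and-maximal-inequality argument transfers convergence to all of $L^1(\bPu)$.
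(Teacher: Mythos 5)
Your overall architecture is the same as the paper's: translation invariance read off from the stationarity of the covariance kernel, Wiener's multiparameter ergodic theorem applied to the measure-preserving $\RR^3$-action $\{\tau_y\}$, and ergodicity of that action forcing the a priori invariant limit to equal the constant $\bE\Phi$. The one genuine difference is how ergodicity is obtained: the paper cites the Grenander--Fomin--Maruyama theorem (for stationary Gaussian fields, an atomless spectral measure implies ergodicity of the translation action, the proof carrying over to the vector-valued setting), whereas you propose to prove the stronger property of mixing by hand, via decay of the covariance kernel plus Wick factorization on cylinder functions. That route is legitimate and standard, and mixing does in fact hold for this field.

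However, your step (ii) is justified incorrectly: the Riemann--Lebesgue lemma does not apply. The spectral measure $d\rho=p\otimes\overline{p}\,d\si$ is absolutely continuous as a measure \emph{on the sphere}, but as a measure on $\RR^3$ it is singular---it is carried by $\S$, a Lebesgue-null set---so $\vka$ is not the Fourier transform of an $L^1(\RR^3)$ density and Riemann--Lebesgue gives nothing. This is not a pedantic objection: the Fourier transform of a smooth density on a \emph{flat} piece of hypersurface does not decay in the tangential directions, so any proof of $\vka(x)\to0$ must use the curvature of $\S$; moreover, atomlessness alone never implies such decay (there are continuous singular measures whose Fourier transforms do not vanish at infinity), which is precisely why ergodicity and mixing are genuinely different conditions here. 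The gap is fixable from the paper's own results: Proposition~\ref{P.kernel} gives
\[
\vka_{jk}(x)=(2\pi)^{\frac32}\,p_j(D)\,p_k(-D)\,\frac{J_{1/2}(|x|)}{|x|^{1/2}}\,,
\]
and since $J_{1/2}(r)/r^{1/2}=\sqrt{2/\pi}\,\sin r/r$, this kernel and all its derivatives are $O(1/|x|)$; equivalently, one may invoke the stationary-phase bound $|\widehat{f\,d\si}(x)|\leq C/|x|$ for smooth densities on the round sphere. With that substitution, your steps (iii)--(iv) go through and yield the proposition (indeed a slightly stronger conclusion, mixing, than the ergodicity the paper actually needs).
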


\begin{proof}
  Since the covariance kernel $\ka(x,y)$ only depends on~$x-y$, the
  probability measure~$\bPu$ is translationally invariant. Also, note
  that $(y,w)\mapsto \tau_yw$ defines a continuous map
  \[
\RR^3\times C^k(\RR^3, \RR^3)\to C^k(\RR^3, \RR^3)\,,
  \]
  so the map $(y,w)\mapsto \Phi(\tau_yw)$ is measurable on the product
  space $\RR^3 \times C^k(\RR^3, \RR^3)$. Wiener's ergodic
theorem~\cite{NS16, Bec81} then ensures that, for $\Phi$ as in the
statement, there is a random variable $\Phi^*\in
L^1(C^k(\RR^3\times\RR^3),\bPu)$ such that
\[
\Bint_{B_R} \Phi\circ\tau_y\, dy \Lone \Phi^*
\]
as $R\to\infty$. Furthermore, $\Phi^*$ is translationally invariant
(i.e., $\Phi^*\circ\tau_y=\Phi^*$ for all $y\in\RR^3$ almost surely) and $\bE\Phi^* =
\bE\Phi$.

Also, as the spectral measure (computed in Proposition~\ref{P.kernel}
above) has no atoms, a theorem of Grenander, Fomin and Maruyama (see
e.g.~\cite[Appendix B]{NS16} or~\cite{Grenander} and note that the
proof carries over to the multivariate and vector-valued case) ensures that the action of the translations
$\{\tau_y: y\in\RR^3\}$ on the probability space
$(C^k(\RR^3,\RR^3),\fS,\bPu)$ is ergodic. As the measurable
function~$\Phi^*$ is translationally invariant, one then infers that
$\Phi^*$ is constant $\bPu$-almost surely. As $\Phi$ and $\Phi^*$ have
the same expectation, then $\Phi^*=\bE \Phi$ almost surely.
The proposition then follows.
\end{proof}

It is clear that the support of the probability measure~$\bPu$ must
be contained in the space of Beltrami fields. In the last result of this section,
we show that the support is in fact the whole space. This property will be key in the following sections.

\begin{proposition}\label{P.support}
The support of the Gaussian probability measure $\mu_u$ is the space of Beltrami fields. More precisely, let $v$ be a Beltrami field. For any compact set $K\subset\RR^3$ and
each~$\ep>0$,
\[
\bPu\big(\big\{ w\in C^k(\RR^3,\RR^3): \|v-w\|_{C^k(K)}<\ep\big\}\big)>0\,.
\]
\end{proposition}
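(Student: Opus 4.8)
The plan is to combine the global approximation result of Proposition~\ref{P.approx} with a two-scale probabilistic argument that separates the low-frequency and high-frequency contributions to the random series~\eqref{seriesu}.

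First I would reduce to approximating~$v$ by a field whose Fourier density is a finite combination of spherical harmonics. Since $\|v-w\|_{C^k(K)}\le\|v-w\|_{C^k(\overline{B_R})}$ for any ball $\overline{B_R}\supset K$, it suffices to prove the claim for such a ball, whose complement $\RR^3\setminus\overline{B_R}$ is connected. As $v$ satisfies $\curl v=v$ on all of~$\RR^3$, Proposition~\ref{P.approx} then provides a Hermitian finite linear combination of spherical harmonics~$\vp$ with $\|U_{\vp p}-v\|_{C^k(\overline{B_R})}<\ep/2$. Because $\vp$ is Hermitian, the identity $Y_{lm}(-\xi)=(-1)^lY_{lm}(\xi)$ forces its expansion coefficients in the basis $\{i^lY_{lm}\}$ to be \emph{real}, so $\vp=\sum_{l\le L,\,m}i^l a^*_{lm}Y_{lm}$ with real $a^*_{lm}$ for some degree~$L$. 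By linearity of $f\mapsto U_f$ this gives $U_{\vp p}=\sum_{l\le L,\,m}a^*_{lm}U_{i^lY_{lm}p}$, a real combination of precisely the smooth real Beltrami fields appearing in~\eqref{seriesu}.

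Next I would split the random field as $u=S_L+T_L$, where $S_L:=\sum_{l\le L,\,m}a_{lm}U_{i^lY_{lm}p}$ and $T_L:=\sum_{l>L,\,m}a_{lm}U_{i^lY_{lm}p}$, and note that these pieces are independent, being functions of disjoint families of the independent Gaussians $a_{lm}$. By the triangle inequality and $\|U_{\vp p}-v\|_{C^k(\overline{B_R})}<\ep/2$, the target event contains
\[
\big\{\|S_L-U_{\vp p}\|_{C^k(\overline{B_R})}<\tfrac{\ep}{4}\big\}\cap\big\{\|T_L\|_{C^k(\overline{B_R})}<\tfrac{\ep}{4}\big\},
\]
so by independence it suffices that each factor has positive probability. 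The first piece is a finite sum $S_L-U_{\vp p}=\sum_{l\le L,\,m}(a_{lm}-a^*_{lm})U_{i^lY_{lm}p}$, whose norm is bounded by $\sum_{l\le L,\,m}|a_{lm}-a^*_{lm}|\,\|U_{i^lY_{lm}p}\|_{C^k(\overline{B_R})}$; as this involves only finitely many independent Gaussians and finitely many fixed constants, the event that every $a_{lm}$ lands in a small enough interval around $a^*_{lm}$ has positive probability and forces $\|S_L-U_{\vp p}\|_{C^k(\overline{B_R})}<\ep/4$.

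The remaining point, which is the crux, is $\mu_u\{\|T_L\|_{C^k(\overline{B_R})}<\ep/4\}>0$ for a suitable~$L$. Here I would use the freedom to enlarge~$L$: any $\vp$ of degree $\le L_\vp$ is also a combination up to any larger degree (with vanishing intermediate coefficients), so the previous step is unaffected by increasing~$L$. Proposition~\ref{P.conv} guarantees that~\eqref{seriesu} converges in $C^k$ on compact sets almost surely, which is exactly the statement that the tails satisfy $\|T_L\|_{C^k(\overline{B_R})}\to0$ almost surely as $L\to\infty$. Almost sure convergence implies convergence in probability, so there is an $L_0$ with $\mu_u\{\|T_{L_0}\|_{C^k(\overline{B_R})}<\ep/4\}>\tfrac12>0$; taking $L:=\max(L_\vp,L_0)$ makes both factors positive and finishes the proof. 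The main obstacle is precisely this positivity at an arbitrarily small threshold: a crude almost-sure finiteness bound on $\|T_L\|$ only yields a large threshold, and it is the quantitative almost-sure decay of the tails from Proposition~\ref{P.conv}, combined with the liberty to push~$L$ beyond the degree of~$\vp$, that upgrades it to what is needed.
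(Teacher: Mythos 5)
Your proof is correct and follows essentially the same route as the paper: approximate $v$ by $U_{\vp p}$ with $\vp$ a Hermitian finite combination of spherical harmonics (Proposition~\ref{P.approx}), split $u$ into the finite sum plus the independent tail, force the finitely many Gaussians to land near the coefficients of~$\vp$ with positive probability, and use the almost sure convergence of the series from Proposition~\ref{P.conv} to make the tail small with positive (indeed high) probability at a sufficiently large cutoff. Your explicit enlargement of $K$ to a closed ball, so that Proposition~\ref{P.approx} applies with connected complement, is a minor technical point that the paper's proof leaves implicit.
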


\begin{proof}
By Proposition~\ref{P.approx}, there exists a Hermitian finite linear
combination of spherical harmonics,
\[
\vp=\sum_{l=0}^n\sum_{m=-l}^l i^l \al_{lm} Y_{lm}\,,
\]
where $\al_{lm}$ are
real numbers (not random variables), such that $\|v-U_{\vp p}\|_{C^k(K)}<\ep/4$. Hence
\[
\bPu\big (\big\{ w\in C^k(\RR^3,\RR^3):
    \|w-v\|_{C^k(K)}<\ep\big\}\big) \geq \bP\bigg (\bigg\{\|u-U_{\vp p}\|_{C^k(K)}<\frac\ep4\bigg\}\bigg)\,,
\]
where $\bP$ denotes the natural Gaussian probability measure on the space of
sequences $(a_{lm})$.

Proposition~\ref{P.conv} shows that the series
\[
  \sum_{l=0}^\infty\sum_{m=-l}^l a_{lm} U_{i^lY_{lm}p}
\]
converges in~$C^k(K)$ almost surely, so for any fixed $\de>0$ there
exists some number~$N$ (which one can assume larger than~$n$) such that
\[
\bP\bigg(\bigg\{\bigg\| \sum_{l=N+1}^\infty\sum_{m=-l}^l a_{lm} U_{i^l
        Y_{lm}p}\bigg\|_{C^k(K)}<\frac\ep8\bigg\}\bigg)>1-\de\,.
\]
With the convention that $\al_{lm}:=0$
for $l>n$, note that
\[
\|u-U_{\vp p}\|_{C^k(K)}\leq \sum_{l=0}^N\sum_{m=-l}^l
|a_{lm}-\al_{lm}| \|U_{i^lY_{lm}p}\|_{C^k(K)} + \left\| \sum_{l=N+1}^\infty\sum_{m=-l}^l a_{lm} U_{i^lY_{lm}p}\right\|_{C^k(K)}\,.
\]
Therefore, if we set $M:= 8(N+1)^2\max_{l\leq N}\max_{-l\leq m\leq l}
\|U_{i^lY_{lm}p}\|_{C^k(K)}$, it follows that
\begin{multline*}
  \bP\left(\left\{\|u-U_{\vp p}\|_{C^k(K)}<\frac\ep4\right\}\right) \\
  \geq
 \bP\bigg (\bigg\{\bigg\| \sum_{l=N+1}^\infty\sum_{m=-l}^l a_{lm}
       U_{i^lY_{lm}p}\bigg\|_{C^k(K)}<\frac\ep8
  \bigg\}\bigg)\, \prod_{l=0}^N \prod_{m=-l}^l \bP\left(\left\{
    |a_{lm}-\al_{lm}|<\frac\ep M\right\}\right)\,,
\end{multline*}
which is strictly positive. The proposition then follows.
\end{proof}

\section{Preliminaries about hyperbolic periodic orbits and invariant tori}
\label{S.preliminaries}

In this section we construct Beltrami fields that exhibit hyperbolic periodic orbits or a positive measure set of ergodic invariant tori of arbitrary topology. Our constructions are robust in the sense that these properties hold for any other divergence-free field that is $C^4$-close to the Beltrami field. Additionally, we recall some basic notions and results about periodic orbits and invariant tori that will be useful in the following sections.

\subsection{Hyperbolic periodic orbits}\label{per}

We recall that a periodic integral curve, or periodic orbit, $\gamma$ of a vector field $u$ is hyperbolic if all the (possibly complex) eigenvalues $\la_j$ of the monodromy matrix of $u$ at $\ga$ have modulus $|\la_j|\neq 1$. Since we are interested in divergence-free vector fields in dimension 3, in this case the eigenvalues are of the form $\la,\la^{-1}$ for some real $\la>1$. The maximal Lyapunov exponent of the periodic orbit $\gamma$ is defined as $\La:=\frac{\log \la}{T}>0$, where $T$ is the period of $\ga$.

Given a closed curve $\ga_0$ smoothly embedded in $\RR^3$, we say that
$\ga$ has the knot type $[\ga_0]$ if $\ga$ is isotopic to $\ga_0$. It
is well known that the number of knot types is countable. Given a set
of four positive numbers $\cI=(T_1,T_2,\La_1,\La_2)$, with $0<T_1<T_2$
and~$0<\La_1<\La_2$, we denote by $\No(R;[\ga],\cI)$ the number of
hyperbolic periodic orbits of a vector field $u$ contained in the ball
$B_R$, of knot type $[\ga]$, whose periods and maximal Lyapunov
exponents are in the intervals $(T_1,T_2)$ and $(\La_1,\La_2)$,
respectively. Since we have fixed the intervals of the periods and
Lyapunov exponents, there is a neighborhood of thickness $\eta_0$ of
each periodic orbit ($\eta_0$ independent of the orbit) such that no
other periodic orbit of this type intersects it. The compactness of~$B_R$ then immediately implies that $\No(R,[\ga],\cI)$ is finite,
although the total number of hyperbolic periodic orbits in~$B_R$ may
be countable.

An easy application of the hyperbolic permanence theorem~\cite[Theorem
1.1]{HPS} implies that the above periodic orbits are robust under
$C^1$-small perturbations, so that
\[
  N_v^{\mathrm{o}}(R;[\ga],\cI)\geq
  \No(R;[\ga],\cI)
\]
for any vector field~$v$ that is close enough to~$u$
in the $C^1$~norm. Indeed, if $\|u-v\|_{C^1(B_R)}<\de$, then $v$ has a
periodic orbit $\ga_\de$ that is isotopic to, and contained in
a tubular neighborhood of width~$C\de$ of, each periodic orbit~$\ga$
of~$u$ that has the aforementioned properties. Moreover, the period
and maximal Lyapunov exponent of $\ga_\de$ is also $\de$-close to that
of $\ga$, so choosing $\de$ small enough they still lie in the
intervals $(T_1,T_2)$ and $(\La_1,\La_2)$, respectively. Thus we have
proved the following:

\begin{proposition}\label{P.lscpo}
The functional $u\mapsto \No(R;[\ga],\cI)$ is
lower semicontinuous in the $C^k$~compact open topology for vector
fields, for any $k\geq 1$. Furthermore, $\No(R;[\ga],\cI)<\infty$ for
any $C^1$~vector field~$u$.
\end{proposition}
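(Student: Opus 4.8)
The plan is to derive both conclusions from the local hyperbolic structure of the counted orbits, the quantitative input being the hyperbolic permanence theorem~\cite[Theorem~1.1]{HPS}. For the lower semicontinuity, I would first note that since~$k\geq1$ a small enough~$C^k$ compact-open neighborhood of~$u$ is contained in $\{v:\|u-v\|_{C^1(\overline{B_R})}<\de\}$, so it suffices to prove the robustness inequality $N_v^{\mathrm{o}}(R;[\ga],\cI)\geq\No(R;[\ga],\cI)$ for all~$v$ that are~$C^1$-close to~$u$. Accordingly I would fix any finite $N\leq\No(R;[\ga],\cI)$ and choose pairwise disjoint hyperbolic periodic orbits $\ga_1,\dots,\ga_N\subset B_R$ of~$u$ of knot type~$[\ga]$, with periods $T_j\in(T_1,T_2)$ and maximal Lyapunov exponents $\La_j\in(\La_1,\La_2)$.

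Next I would invoke persistence. Each~$\ga_j$ is a compact, normally hyperbolic invariant manifold (an embedded circle) for the flow of~$u$, so~\cite[Theorem~1.1]{HPS} supplies $\de>0$ such that every~$v$ with $\|u-v\|_{C^1(B_R)}<\de$ possesses, in a tube of width~$C\de$ about each~$\ga_j$, a normally hyperbolic invariant circle $\ga_{j,v}$ depending continuously on~$v$ and~$C^1$-diffeomorphic to~$\ga_j$. Since~$u$, and hence~$v$, does not vanish on~$\ga_j$, the restriction of~$v$ to~$\ga_{j,v}$ is a nonvanishing field on a circle; thus~$\ga_{j,v}$ is a genuine periodic orbit of period close to~$T_j$, and its transverse monodromy varies continuously with~$v$, so its eigenvalues stay off the unit circle with maximal Lyapunov exponent close to~$\La_j$.

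I would then check that the persisting orbits still satisfy all defining constraints once~$\de$ is small. Because the~$\ga_j$ lie at positive distance from~$\partial B_R$ and are mutually at positive distance, the~$\ga_{j,v}$ remain inside~$B_R$, pairwise disjoint, and (being~$C^0$-close and isotopic to~$\ga_j$) of knot type~$[\ga]$; and since $T_j,\La_j$ are interior points of the open intervals, the period and Lyapunov exponent of~$\ga_{j,v}$ also fall in $(T_1,T_2)$ and $(\La_1,\La_2)$. Hence $N_v^{\mathrm{o}}(R;[\ga],\cI)\geq N$, and letting $N\uparrow\No(R;[\ga],\cI)$ yields lower semicontinuity.

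Finally, for finiteness I would exploit that the constraints force a uniform spectral gap: the larger monodromy eigenvalue of any admissible orbit satisfies $\la_j=e^{\La_j T_j}\geq e^{\La_1T_1}>1$. Combined with the bounds on $\|u\|_{C^1(\overline{B_R})}$ and on the orbit length (at most $T_2\sup_{B_R}|u|$), a uniform stable/unstable manifold theorem should produce a tubular neighborhood of a single scale~$\eta_0>0$, independent of the orbit, inside which each admissible orbit is the only recurrent set of its type; then distinct admissible orbits are~$\eta_0$-separated and only finitely many fit in the bounded set~$B_R$. The persistence and eigenvalue-continuity steps are routine, so the hard part will be this last uniformity: promoting the orbit-by-orbit local uniqueness of hyperbolic continuations to a single~$\eta_0$ valid for all admissible orbits simultaneously, which is exactly where the boundedness of the period and Lyapunov windows must be turned into quantitative, orbit-independent size bounds.
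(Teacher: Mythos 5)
Your proposal is correct and follows essentially the same route as the paper: lower semicontinuity via the hyperbolic permanence theorem \cite[Theorem~1.1]{HPS} (perturbed orbits in thin tubes around the original ones, with periods, Lyapunov exponents and knot type controlled so they stay admissible), and finiteness via a uniform separation scale $\eta_0$ for admissible orbits—forced by the fixed period and Lyapunov windows—combined with compactness of $B_R$. The uniformity of $\eta_0$ that you flag as the remaining work is precisely the point the paper also asserts without detailed proof, so there is no gap relative to the paper's own argument.
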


The following result ensures that, for any fixed knot type~$[\ga]$ and
any quadruple~$\cI$, there is a Beltrami field~$u$ for which
$\No(R;[\ga],\cI)\geq 1$. This result is a consequence
of~\cite[Theorem 1.1]{Annals}, so we just
give a short sketch of the proof.

\begin{proposition}\label{P.perBelt}
  Given a closed curve $\ga_0\subset \RR^3$ and a set of numbers $\cI$
  as above, there exists a Hermitian finite linear combination of
  spherical harmonics $\varphi$ such that the Beltrami field
  $u_0:=U_{\vp p}$ has a hyperbolic periodic orbit $\ga$ isotopic to
  $\ga_0$, whose period and maximal Lyapunov exponent lie in the
  intervals $(T_1,T_2)$ and $(\La_1,\La_2)$, respectively.
\end{proposition}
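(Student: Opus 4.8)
The plan is to reduce Proposition~\ref{P.perBelt} to the existence result of~\cite[Theorem 1.1]{Annals}, which produces a Beltrami field on~$\RR^3$ (for the normalized frequency $\la=1$) with a hyperbolic periodic orbit of any prescribed knot type, and then to upgrade that field to one of the special form $U_{\vp p}$ with $\vp$ a finite linear combination of spherical harmonics, while keeping control of the period and the maximal Lyapunov exponent. First I would invoke~\cite[Theorem 1.1]{Annals} to obtain a Beltrami field~$v$ possessing a hyperbolic periodic orbit~$\ga_0'$ isotopic to~$\ga_0$. Since the period~$T$ and maximal Lyapunov exponent~$\La$ are determined by the linearized flow along~$\ga_0'$, and the Beltrami equation is invariant under the scaling $x\mapsto cx$ composed with the obvious rescaling of the eigenvalue, I would use this scaling symmetry to adjust~$T$ (and correspondingly~$\La=\tfrac{\log\la}{T}$) so that the resulting orbit has period in~$(T_1,T_2)$ and Lyapunov exponent in~$(\La_1,\La_2)$. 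One must check that the combined scaling can land both quantities simultaneously in their respective open intervals; this is the one place where a short quantitative argument is needed, exploiting that hyperbolicity (the multiplier~$\la$) is scale-invariant while the period scales linearly.

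Next I would transfer this field to the desired functional form. The field~$v$ is a Beltrami field, so by Herglotz's theorem (\cite[Theorem 7.1.28]{Hor15}) one can write $v=U_f$ with a Hermitian density $f\in L^2(\S,\CC^3)$ satisfying~\eqref{eqf}, and then Proposition~\ref{P.approx} applies: taking a compact set~$K$ containing a tubular neighborhood of~$\ga_0'$ (chosen with $\RR^3\setminus K$ connected), there is a Hermitian finite linear combination of spherical harmonics~$\vp$ with
\[
\|U_{\vp p}-v\|_{C^k(K)}<\de
\]
for any prescribed $\de>0$ and any $k\geq1$.

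Finally I would use the robustness of hyperbolic periodic orbits. By the hyperbolic permanence theorem (\cite[Theorem 1.1]{HPS}), as already invoked in the discussion preceding Proposition~\ref{P.lscpo}, for $\de$ sufficiently small the field $u_0:=U_{\vp p}$ has a periodic orbit~$\ga$ contained in a thin tube around~$\ga_0'$, hence isotopic to~$\ga_0'$ and therefore to~$\ga_0$, and this orbit remains hyperbolic with period and maximal Lyapunov exponent $\de$-close to those of~$\ga_0'$. Choosing $\de$ small enough keeps these in $(T_1,T_2)$ and $(\La_1,\La_2)$, which completes the proof.

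The main obstacle I anticipate is the bookkeeping around the period and Lyapunov exponent: both the scaling argument (to place the original orbit's invariants in the target intervals with room to spare) and the perturbation argument (to guarantee the $\de$-close invariants of~$\ga$ still lie inside the \emph{open} intervals) must be arranged so the small permanence-theorem errors do not push either quantity out of range. Everything else — the passage through Herglotz, the application of Proposition~\ref{P.approx}, and the isotopy/hyperbolicity persistence — is a direct assembly of results already available in the excerpt, so the only genuine care is in tracking these two continuous dynamical quantities through the two approximation steps.
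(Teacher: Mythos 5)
There is a genuine gap in your scaling step, and it is precisely the point you flagged as needing ``a short quantitative argument''. The final field must be of the form $U_{\vp p}$, hence must solve $\curl u_0=u_0$ with eigenvalue exactly~$1$, so the only transformations you may apply to the black-box field from \cite[Theorem 1.1]{Annals} without leaving this class are rigid motions and amplitude scalings $v\mapsto av$ (a spatial dilation $x\mapsto cx$ turns the eigenvalue into~$c$, and the only way to restore eigenvalue~$1$ is to undo the dilation). Under $v\mapsto av$ the periodic orbit is unchanged as a set, its monodromy multiplier $\la_0>1$ is unchanged, and $(T,\La)\mapsto(T/a,a\La)$; the dilation-plus-eigenvalue-rescaling you describe acts in exactly the same way. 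Consequently the product $T\La=\log\la_0$ is an invariant of everything you can do: the reachable set of pairs $(T,\La)$ is the single hyperbola $\{T\La=\log\la_0\}$, which meets the rectangle $(T_1,T_2)\times(\La_1,\La_2)$ if and only if $T_1\La_1<\log\la_0<T_2\La_2$. Since the proposition quantifies over \emph{all} quadruples $\cI$ and \cite[Theorem 1.1]{Annals} gives no control whatsoever on the multiplier $\la_0$, the argument fails whenever, say, $T_2\La_2\leq\log\la_0$. This is not bookkeeping: a one-parameter family of symmetries cannot hit two independently prescribed quantities.

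The paper avoids this by not using \cite[Theorem 1.1]{Annals} as a black box but re-running the construction behind it. After perturbing $\ga_0$ to a real-analytic curve, one writes down on a narrow strip $\Sigma$ around the curve the explicit local model $w:=\frac{|\ga_0|}{T}\nabla\theta-\La\,z\nabla z$, in which $T\in(T_1,T_2)$ and $\La\in(\La_1,\La_2)$ are prescribed at will, and then invokes the Cauchy--Kovalevskaya theorem for Beltrami fields \cite[Theorem 3.1]{Annals} to produce a genuine Beltrami field $v$ near $\ga_0$ with $v|_\Sigma=w$; by construction $\ga_0$ is a hyperbolic periodic orbit of $v$ with exactly the prescribed period and exponent. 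From that point on your argument and the paper's coincide: Proposition~\ref{P.approx} (your detour through Herglotz is unnecessary, since Proposition~\ref{P.approx} applies directly to fields satisfying $\curl v=v$ near a compact set) yields $u_0=U_{\vp p}$ close to $v$ in $C^k$ near the orbit, and hyperbolic permanence preserves the orbit, its isotopy class, and keeps its invariants inside the open intervals. So the fix is to prescribe $(T,\La)$ at the level of the local model before applying Cauchy--Kovalevskaya, rather than trying to recover them afterwards by scaling.
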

\begin{proof}
Proceeding as in~\cite[Section 3, Step 2]{Annals}, after perturbing slightly the curve $\ga_0$ to make it real analytic (let us also call $\ga_0$ the new curve), we construct a narrow strip $\Sigma$ that contains the curve $\ga_0$. Using the same coordinates $(z,\theta)$ as introduced in~\cite[Section 5]{Annals}, we define an analytic vector field
$$w:=\frac{|\ga_0|}{T}\,\nabla \theta-\La\,z\nabla z\,,$$
where $|\ga_0|$ is the length of $\ga_0$ and $T\in (T_1,T_2)$, $\La\in (\La_1,\La_2)$. Using the Cauchy--Kovalevskaya theorem for Beltrami fields~\cite[Theorem 3.1]{Annals}, we obtain a Beltrami field $v$ on a neighborhood of $\ga_0$ such that $v|_{\Sigma}=w$. A straightforward computation shows that $\ga_0$ is a hyperbolic periodic orbit of $v$ of period $T$ and maximal Lyapunov exponent $\La$. The result immediately follows by applying Proposition~\ref{P.approx}.
\end{proof}
\begin{corollary}\label{C.perBelt}
There exists $R_0>0$ and $\delta>0$ such that $N^{\mathrm{o}}_w(R_0;[\ga],\cI)\geq 1$ for any vector field $w$ such that $\|w-u_0\|_{C^k(B_{R_0})}<\delta$, provided that $k\geq 1$.
\end{corollary}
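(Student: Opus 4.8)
The plan is to read off the statement as an immediate combination of Proposition~\ref{P.perBelt} with the lower semicontinuity recorded in Proposition~\ref{P.lscpo}, so that essentially no new analysis is required. First I would invoke Proposition~\ref{P.perBelt} to produce the Beltrami field $u_0=U_{\vp p}$ together with a hyperbolic periodic orbit $\ga$, isotopic to $\ga_0$, whose period lies in $(T_1,T_2)$ and whose maximal Lyapunov exponent lies in $(\La_1,\La_2)$. In particular $\ga$ has knot type $[\ga]=[\ga_0]$ and is precisely the kind of orbit tallied by the functional $N^{\mathrm{o}}_{\cdot}(\,\cdot\,;[\ga],\cI)$; recall that this count depends only on the isotopy class together with the period and Lyapunov windows, so the notation is unambiguous.

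Next, since $\ga$ is a compact subset of~$\RR^3$, I would fix any radius $R_0>\max_{x\in\ga}|x|$, so that $\ga$ is contained strictly inside the ball $B_{R_0}$. With this choice $\ga$ is one of the orbits counted by $N^{\mathrm{o}}_{u_0}(R_0;[\ga],\cI)$, and hence $N^{\mathrm{o}}_{u_0}(R_0;[\ga],\cI)\geq 1$. Choosing $R_0$ strictly larger than the radius of~$\ga$ leaves room for the thin tubular neighborhood of width $C\de$ produced in the proof of Proposition~\ref{P.lscpo} to remain inside $B_{R_0}$ for all small~$\de$, which is what lets the perturbed orbit still be counted in the same ball.

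The final step is to apply Proposition~\ref{P.lscpo}: for $k\geq 1$ the functional $w\mapsto N^{\mathrm{o}}_w(R_0;[\ga],\cI)$ is lower semicontinuous in the $C^k$ compact-open topology, and $\overline{B_{R_0}}$ is compact, so $C^k$ closeness on $B_{R_0}$ is the relevant neighborhood. Because this functional is integer-valued, lower semicontinuity at $u_0$ yields a $\de>0$ for which $\|w-u_0\|_{C^k(B_{R_0})}<\de$ forces $N^{\mathrm{o}}_w(R_0;[\ga],\cI)\geq N^{\mathrm{o}}_{u_0}(R_0;[\ga],\cI)\geq 1$, which is exactly the claim. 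There is no genuine obstacle here, as all the analytic content has been absorbed into Propositions~\ref{P.perBelt} and~\ref{P.lscpo}; the only point deserving care --- already settled in the proof of Proposition~\ref{P.lscpo} via the hyperbolic permanence theorem --- is that a $C^1$-small perturbation of $u_0$ not merely keeps a nearby periodic orbit alive, but keeps its knot type equal to~$[\ga]$ and keeps its period and maximal Lyapunov exponent inside the prescribed open intervals $(T_1,T_2)$ and $(\La_1,\La_2)$, so that the perturbed orbit is indeed still counted by $N^{\mathrm{o}}_w(R_0;[\ga],\cI)$.
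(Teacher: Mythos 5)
Your proposal is correct and follows exactly the paper's argument: take $R_0$ large enough that the orbit $\ga$ produced by Proposition~\ref{P.perBelt} lies in $B_{R_0}$, then apply the lower semicontinuity of $w\mapsto N^{\mathrm{o}}_w(R_0;[\ga],\cI)$ from Proposition~\ref{P.lscpo} to get the $\de$. Your added remark that the $C\de$-tubular neighborhood of the perturbed orbit must stay inside $B_{R_0}$ is a sensible point of care, but it does not change the substance of the argument.
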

\begin{proof}
Taking $R_0$ large enough so that the periodic orbit $\ga$ is contained in $B_{R_0}$, the result is a straightforward consequence of the lower semicontinuity of $N^{\mathrm{o}}_u(R;[\ga],\cI)$, cf. Proposition~\ref{P.lscpo}.
\end{proof}

\subsection{Nondegenerate invariant tori}\label{tor}

We recall that an invariant torus $\cT$ of a vector field~$u$ is a
compact surface diffeomorphic to the 2-torus, smoothly embedded
in~$\RR^3$, and such that, the field~$u$ is tangent to~$\cT$ and does
not vanish on~$\cT$. In other words, $\cT$~is invariant under the flow of~$u$. Given an embedded torus~$\cT_0$, we say that~$\cT$ has the knot type~$[\cT_0]$ if~$\cT$ is isotopic to~$\cT_0$. It is well known that the number of knot types of embedded tori is countable.

To study the robustness of the invariant tori of a vector field it is customary to introduce two concepts: an arithmetic condition (called Diophantine), which is related to the dynamics of~$u$ on~$\cT$, and a nondegeneracy condition (called twist) that is related to the dynamics of~$u$ in the normal direction to~$\cT$.

We say that the invariant torus~$\cT$ is Diophantine with Diophantine
frequency~$\om$ if there exist global coordinates on the torus $(\theta_1,\theta_2)\in
(\RR/\mathbb Z)^2$ such that the restriction of the field~$u$
to~$\cT$ reads in these coordinates as
\begin{equation}\label{eq.torus}
u|_\cT=a\, e_{\te_1}+b\, e_{\te_2}\,,
\end{equation}
for some nonzero real constants $a,b$, and $\om:=a/b$ modulo~$1$ is a
Diophantine number. This means that there exist constants~$c>0$ and~$\nu>1$ such that
\[
\Big|\om-\frac{p}{m}\Big|\geq \frac{c}{m^{\nu+1}}
\]
for any integers $p,m$ with $m\geq 1$. Here $e_{\te_j}$ (often denoted
by~$\pd_{\te_j}$) denotes the tangent
vector in the direction of~$\te_j$. We recall that the set of
Diophantine numbers (with all $c>0$ and all $\nu>1$) has full
measure. The value of the frequency~$\omega$, modulo~$1$, is
independent of the choice of coordinates.

Let us now introduce the notion of twist, which is more involved. To this end, we parameterize a neighborhood of $\cT$ with a coordinate system $(\rho,\theta_1,\theta_2)\in (-\delta,\delta)\times (\RR/\mathbb Z)^2$ such that $\cT=\{\rho=0\}$ and $u|_{\rho=0}$ has the form~\eqref{eq.torus}. Let us now compute the Poincar\'e map $\pi$ defined by the flow of $u$ on a transverse section $\Sigma\subset \{\theta_2=0\}$ (which exists if $\delta$ is small enough because $b\neq 0$):
\begin{align}
\pi:(-\delta',\delta')\times (\RR/\mathbb Z) &\to
  (-\delta,\delta)\times (\RR/\mathbb Z)\\
  (\rho,\theta_1) &\mapsto (\pi_1(\rho,\theta_1),\pi_2(\rho,\theta_1))\,,
\end{align}
for $\de'<\de$. Obviously,
$\pi(0,\theta_1)=(0,\theta_1+\omega)$. Since $u$ is divergence-free,
the map $\pi$ preserves an area form $\sigma$ on $\Sigma$, which one
can write in these coordinates as \begin{equation}
\sigma=F(\rho,\theta_1)\,d\rho\wedge d\theta_1\,,
\end{equation}
for some positive function $F$. Notice that the area form $\sigma$ is
exact because it can be written as $\sigma=dA$, where~$A$ is the 1-form
\[
 A:=h(\rho,\theta_1)\,d\theta_1\,,\qquad h(\rho,\theta_1):=\int_{-\delta}^{\rho}F(s,\theta_1)\,ds\,,
\]
and the map $\pi$ is also exact in the sense that $\pi^*A-A$ is an exact $1$-form. Indeed, the area preservation implies that $d(\pi^*A-A)=0$; moreover the periodicity of $h$ in $\theta_1$ readily implies that
\[
\int_0^1(\pi^*A-A)|_{\rho=0}=\int_0^1(h(0,\theta_1+\omega)-h(0,\theta_1))\,d\theta_1=0\,,
\]
so the claim follows from De Rham's theorem. The exactness of both $\sigma$ and $\pi$ is a crucial ingredient to apply the KAM theory.

\begin{remark}
It was shown in~\cite[Proposition 7.3]{Acta} that if the Euclidean volume form $dx$ reads as $H(\rho,\theta_1,\theta_2)\,d\rho\wedge d\theta_1\wedge d\theta_2$ in coordinates $(\rho,\theta_1,\theta_2)$ for some positive function~$H$, then the factor~$F$ that defines the area form $\sigma$ is $F(\rho,\te_1)=H(\rho,\theta_1,0)u_{\theta_2}(\rho,\theta_1,0)$, where $u_{\theta_2}$ denotes the $\theta_2$-component of the vector field~$u$.
\end{remark}

The twist of the invariant torus $\cT$ is then defined as the number
\begin{equation}\label{eq.twist}
\tau:=\int_0^1\frac{\partial_\rho \pi_2(0,\theta_1)}{F(0,\theta_1)}\,d\theta_1\,.
\end{equation}
The reason for which we consider this quantity is that it crucially
appears in the KAM nondegeneracy condition of~\cite{GL08}, cf. Ref.~\cite[Definition 7.5]{Acta} for this particular case.

In the present paper we are interested in the volume of the set of
invariant tori of a divergence-free vector field~$u$. More precisely,
given a
quadruple $\cJ:=(\omega_1,\omega_2, \tau_1,\tau_2)$, where
$0<\om_1<\om_2$, $0<\tau_1<\tau_2$, we denote by $\Vt(R;[\cT],\cJ)$
the inner measure of the set of Diophantine invariant tori of a
vector field~$u$ contained in the ball~$B_R$, of knot type~$[\cT]$,
whose frequencies and twists are in the intervals~$(\om_1,\om_2)$
and~$(\tau_1,\tau_2)$, respectively. One must employ the inner
measure of this set (as opposed to its usual volume) because this set does not need to be measurable.
When we speak of the volume of
this set, it should always be understood in this sense. An efficient
way of providing a lower bound for this volume is by considering, for each~$V_0>0$, the
number $\Nt(R;[\cT],\cJ,V_0)$ of pairwise disjoint (closed) solid tori contained in~$B_R$ whose
boundaries are Diophantine invariant tori with parameters in~$\cJ$ and which
contain a set of Diophantine invariant tori with parameters in~$\cJ$ of inner
measure greater that~$V_0$.

\begin{remark}
The twist defined in Equation~\eqref{eq.twist} depends on several
choices we made to construct the Poincar\'e map (i.e., the transverse
section and the coordinate system). Accordingly, the functional
$\Vt(R;[\cT],\cJ)$ has to be understood as the inner measure of the
set of Diophantine invariant tori whose twists lie in the interval
$(\tau_1,\tau_2)$ for some choice of  (suitably bounded) coordinates
and sections, and similarly with~$\Nt(R;[\cT],\cJ,V_0)$. It is well known that the property of nonzero twist is independent of the aforementioned choices.
\end{remark}

Since the Poincar\'e map $\pi$ that we introduced above is exact, we
can apply the KAM theorem for divergence-free vector
fields~\cite[Theorem 3.2]{KKP} to show that the above invariant tori
are robust for $C^4$-small perturbations, so that $V_v^{\mathrm
  t}(R;[\cT],\cJ)\geq \Vt(R;[\cT],\cJ)+o(1)$ and $N_v^{\mathrm
  t}(R;[\cT],\cJ,V_0)\geq \Nt(R;[\cT],\cJ,V_0)$ for any divergence-free
vector field~$v$ that is $C^4$-close to~$u$. Indeed, if
$\|u-v\|_{C^4(B_R)}<\de$, then~$v$ has a set of Diophantine invariant tori of knot
type~$[\cT]$ and of volume
$$
V_v^{\mathrm t}(R;[\cT],\cJ)\geq \Vt (R;[\cT],\cJ)-C\de^{1/2}\,.
$$
Here we have used that the frequency and twist of each of these invariant tori is
$\de$-close to those of~$u$, so by choosing~$\de$ small enough they
lie in the intervals $(\om_1,\om_2)$ and $(\tau_1,\tau_2)$,
respectively. The argument for~$\Nt(R;[\cT],\cJ,V_0)$ is analogous. Summing up, we have proved the following:

\begin{proposition}\label{P.lscit}
The functionals $u\mapsto \Nt(R;[\cT],\cJ,V_0)$ and $u\mapsto
\Vt(R;[\cT],\cJ)$ are lower semicontinuous
in the $C^k$~compact open topology for divergence-free vector fields,
for any $k\geq 4$.
\end{proposition}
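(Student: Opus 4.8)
The plan is to recognize that Proposition~\ref{P.lscit} is a direct repackaging, in the language of the $C^k$ compact-open topology, of the quantitative robustness estimates for Diophantine invariant tori derived in the discussion preceding the statement; the structure parallels exactly that of Proposition~\ref{P.lscpo} for periodic orbits. The two inputs I would rely on are the KAM theorem for divergence-free vector fields~\cite[Theorem 3.2]{KKP}, whose hypotheses are met thanks to the exactness of both the invariant area form~$\sigma$ and the Poincar\'e map~$\pi$ verified above, and the continuous dependence of the frequency and twist of each surviving torus on the field, which keeps those parameters inside the prescribed intervals~$(\om_1,\om_2)$ and~$(\tau_1,\tau_2)$ after a small enough perturbation.

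First I would fix~$u$ and note that, since $\overline{B_R}$ is compact and $k\geq4$, a bound on $\|u-v\|_{C^k(B_R)}$ controls $\|u-v\|_{C^4(B_R)}$, so a basic $C^k$ compact-open neighborhood of~$u$ built from $K=\overline{B_R}$ supplies exactly the $C^4(B_R)$ smallness that the KAM estimates require. For the integer-valued functional $u\mapsto\Nt(R;[\cT],\cJ,V_0)$, the bound $N_v^{\mathrm t}(R;[\cT],\cJ,V_0)\geq \Nt(R;[\cT],\cJ,V_0)$, valid whenever $\|u-v\|_{C^4(B_R)}<\de$, directly yields lower semicontinuity: on the neighborhood $\{\|u-v\|_{C^k(B_R)}<\de\}$ one has $N_v^{\mathrm t}\geq\Nt$, hence $\liminf_{v\to u}N_v^{\mathrm t}(R;[\cT],\cJ,V_0)\geq\Nt(R;[\cT],\cJ,V_0)$.

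For the volume functional the only extra step is to absorb the square-root loss in the KAM measure estimate. Given $\ep>0$, I would pick $\de>0$ with $C\de^{1/2}<\ep$, where $C$ is the constant in the robustness bound $V_v^{\mathrm t}(R;[\cT],\cJ)\geq \Vt(R;[\cT],\cJ)-C\de^{1/2}$; then every~$v$ with $\|u-v\|_{C^k(B_R)}<\de$ satisfies $V_v^{\mathrm t}(R;[\cT],\cJ)>\Vt(R;[\cT],\cJ)-\ep$, which is lower semicontinuity at~$u$. The use of inner measure throughout is harmless, since KAM produces inside each solid torus a genuine measurable family of Diophantine tori of measure bounded below, and the perturbed inner measure inherits this lower bound directly.

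In the end there is no real obstacle left in this proof: the substantive work, namely verifying exactness so as to invoke~\cite[Theorem 3.2]{KKP}, and establishing the persistence of the tori with a controlled loss of measure and with frequency and twist varying continuously, has already been carried out in the paragraphs leading up to the statement. The sole structural constraint to flag is the requirement $k\geq4$, which is dictated by the regularity demanded by the divergence-free KAM theorem; for rougher fields the persistence of the invariant tori is not guaranteed and the argument would not close.
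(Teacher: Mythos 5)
Your proposal is correct and follows essentially the same route as the paper: the paper's proof is precisely the paragraph preceding the statement, which invokes the exactness of the Poincar\'e map to apply the KAM theorem for divergence-free fields~\cite[Theorem 3.2]{KKP}, obtains the robustness bounds $N_v^{\mathrm t}(R;[\cT],\cJ,V_0)\geq \Nt(R;[\cT],\cJ,V_0)$ and $V_v^{\mathrm t}(R;[\cT],\cJ)\geq \Vt(R;[\cT],\cJ)-C\de^{1/2}$ for $\|u-v\|_{C^4(B_R)}<\de$ together with the continuity of frequency and twist, and then reads off lower semicontinuity. Your epsilon--delta absorption of the $C\de^{1/2}$ loss is just an explicit rendering of the paper's ``$+o(1)$'' formulation, so there is no substantive difference.
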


We next show that, for any knot type~$[\cT]$, one can pick a
quadruple~$\cJ$ and some $V_0>0$ for which there is a Beltrami
field~$u$ with $\Nt (R;[\cT],\cJ,V_0)\geq 1$. This is a straightforward
consequence of~\cite[Theorem 1.1]{Acta} (see also~\cite[Section
3]{Alex}), so we just sketch the proof.

\begin{proposition}\label{P.toriBelt}
Given an embedded torus $\cT\subset \RR^3$, there exists a set of
numbers $\cJ, V_0$ as above, and a Hermitian finite linear combination of
spherical harmonics $\vp$ such that the Beltrami field $u_0:=U_{\vp p}$ has
a set of inner measure greater than~$V_0>0$ that consists of Diophantine invariant
tori of knot type~$[\cT]$ whose frequencies and twists lie in the
intervals $(\om_1,\om_2)$ and $(\tau_1,\tau_2)$, respectively.
\end{proposition}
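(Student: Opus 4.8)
The plan is to follow the same strategy as in the proof of Proposition~\ref{P.perBelt}, but now constructing a local Beltrami field that carries a real-analytic family of nested invariant tori with nonzero twist, so that KAM theory yields a positive-measure subfamily of Diophantine tori. As in~\cite[Theorem 1.1]{Acta}, I would first perturb the embedded torus~$\cT$ slightly to make it real analytic, again denoting the result by~$\cT$; this leaves its isotopy class unchanged. I would then fix a real-analytic coordinate system $(\rho,\te_1,\te_2)$ in which the tori $\{\rho=\text{const}\}$ are isotopic to~$\cT$ and fill a solid torus~$N$, and prescribe Cauchy data on a transverse analytic surface so that the candidate field reads, on each such torus, as $w=a(\rho)\,e_{\te_1}+b(\rho)\,e_{\te_2}$ in the form~\eqref{eq.torus}, with $a,b$ analytic and $b$ nonvanishing.

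The key feature of this construction is that the frequency profile $\om(\rho):=a(\rho)/b(\rho)$ and the induced twist $\tau(\rho)$ computed from~\eqref{eq.twist} can be prescribed essentially at will near the reference torus. I would therefore choose the data so that, over a subinterval of values of~$\rho$, one has $\om(\rho)\in(\om_1,\om_2)$, $\om'(\rho)\neq0$ (which forces a nonzero twist), and $\tau(\rho)\in(\tau_1,\tau_2)$. Applying the Cauchy--Kovalevskaya theorem for Beltrami fields~\cite[Theorem 3.1]{Annals}, this data extends to a genuine Beltrami field~$v$ defined on a neighborhood of~$\cT$, tangent to the nested tori and realizing the prescribed frequency and twist profiles.

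Because the twist is nonzero, the nondegeneracy hypothesis of the KAM theorem for exact divergence-free vector fields~\cite[Theorem 3.2]{KKP} holds; this is exactly the condition of~\cite[Definition 7.5]{Acta}, following~\cite{GL08}. Consequently a positive-measure Cantor set of the nested tori of~$v$ persists as Diophantine invariant tori of knot type~$[\cT]$ whose frequencies and twists lie in $(\om_1,\om_2)$ and $(\tau_1,\tau_2)$. I then set $\cJ:=(\om_1,\om_2,\tau_1,\tau_2)$, shrink~$N$ so that its boundary is one of these Diophantine tori, and take $V_0>0$ to be any number strictly smaller than the inner measure of this KAM set inside~$N$; for $R$ large enough that $N\subset B_R$ this gives $\Nt(R;[\cT],\cJ,V_0)\geq1$ for the field~$v$.

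Finally I would replace~$v$ by a field of the form~$U_{\vp p}$. By Proposition~\ref{P.approx}, for any fixed $k\geq4$ there is a Hermitian finite linear combination of spherical harmonics~$\vp$ making $\|U_{\vp p}-v\|_{C^k(N)}$ as small as desired. The lower semicontinuity of $u\mapsto\Nt(R;[\cT],\cJ,V_0)$ established in Proposition~\ref{P.lscit} then forces $\Nt(R;[\cT],\cJ,V_0)\geq1$ for $u_0:=U_{\vp p}$, which is precisely the claim. The genuinely delicate step, inherited from~\cite{Acta}, is arranging the Cauchy--Kovalevskaya data so as to produce a \emph{nonzero} twist while keeping both the frequency and the twist inside prescribed open intervals; once this nondegeneracy is in place, the persistence of a positive volume of invariant tori is a direct application of KAM theory, and the approximation together with lower semicontinuity is routine.
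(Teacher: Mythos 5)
There is a genuine gap at the central step of your construction. The Cauchy--Kovalevskaya theorem for Beltrami fields~\cite[Theorem 3.1]{Annals} only allows you to prescribe the field \emph{on a two-dimensional analytic surface}; the extension to a three-dimensional neighborhood is then uniquely determined by the equation $\curl v=v$. You cannot additionally demand that this extension be tangent to the nested tori $\{\rho=\mathrm{const}\}$: tangency to a foliation by tori is a condition on an open set, not on the Cauchy surface, and the CK extension of data prescribed on a surface transverse to the tori will generically violate it. A Beltrami field tangent to a full nested family of tori is an \emph{integrable} Beltrami field, and producing one whose tori have an arbitrarily prescribed knot type is precisely the hard content that~\cite[Theorem 1.1]{Acta} was written to circumvent --- there the invariant tori are obtained only as a Cantor family, via a harmonic-field approximation in a thin tube followed by a KAM argument in which the verification of the twist condition is the most delicate point. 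So your assertion that ``the frequency profile $\om(\rho)$ and the induced twist $\tau(\rho)$ can be prescribed essentially at will'' assumes exactly what has to be proved. Note the contrast with Proposition~\ref{P.perBelt}, where the CK strategy does work: a periodic orbit can be placed \emph{inside} the two-dimensional Cauchy strip, so the required dynamical structure is encoded in the data itself; an open family of invariant tori cannot lie inside a surface. Even the variant in which the Cauchy surface is taken to be the torus~$\cT$ itself (prescribing tangent data with Diophantine rotation number, as in~\cite[Section 3]{Alex}) yields only a single invariant torus, and its twist is then \emph{determined} by the equation rather than prescribable; showing it is nonzero requires a substantial computation, not a choice.

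For comparison, the paper's proof sidesteps all of this by invoking~\cite[Theorem 1.1]{Acta} as a black box: it provides a Beltrami field $v$ with $\curl v=\la v$ for some small $\la>0$ possessing a positive-measure set of Diophantine, positive-twist invariant tori of knot type~$[\cT]$; the rescaling $u(x):=v(x/\la)$ normalizes the eigenvalue to~$1$; then one chooses $(\om_1,\om_2)$ and $(\tau_1,\tau_2)$ to contain the frequencies and twists of those tori, and finally approximates $u$ by $U_{\vp p}$ via Proposition~\ref{P.approx}, with persistence of the tori guaranteed by the KAM stability underlying Proposition~\ref{P.lscit}. Your final steps (choice of $\cJ$ and $V_0$, approximation, lower semicontinuity) coincide with the paper's and are fine; it is the local construction of the integrable model with prescribed frequency and twist profiles that does not go through as written, and if you want to avoid citing~\cite{Acta} wholesale you would instead have to reproduce an argument of that type (or the one of~\cite[Section 3]{Alex}, including its twist computation).
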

\begin{proof}
It follows from~\cite[Theorem 1.1]{Acta} that there exists a Beltrami field $v$ that satisfies $\curl v=\lambda v$ in $\RR^3$ for some small constant $\lambda>0$, which has a positive measure set of invariant tori of knot type~$[\cT]$. These tori are Diophantine and have positive twist. It is obvious that the field $u(x):=v(x/\lambda)$ satisfies the equation $\curl u=u$ in $\RR^3$, and still has a set of Diophantine invariant tori of knot type $[\cT]$ of measure bigger than some constant $V_0$, and positive twist. The result follows taking the intervals $(\om_1,\om_2)$ and $(\tau_1,\tau_2)$ in the definition of $\cJ$, so that they contain the frequencies and twists of these tori of~$u$, and applying Proposition~\ref{P.approx} to approximate~$u$ by a Beltrami field $U_{\vp p}$ in a large ball containing the aforementioned set of invariant tori.
\end{proof}
\begin{corollary}\label{C.torBelt}
Take $\cJ$ and $V_0$ as in Proposition~\ref{P.toriBelt}. There exists $R_0>0$ and $\delta>0$ such that $N^{\mathrm{t}}_w(R_0;[\cT],\cJ,V_0)\geq 1$ and $V^{\mathrm{t}}_w(R_0;[\cT],\cJ)>V_0/2$ for any divergence-free vector field $w$ such that $\|w-u_0\|_{C^k(B_{R_0})}<\delta$, provided that $k\geq 4$.
\end{corollary}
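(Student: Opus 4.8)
The plan is to mimic the proof of Corollary~\ref{C.perBelt}, combining the existence statement of Proposition~\ref{P.toriBelt} with the quantitative robustness and lower semicontinuity properties recorded in Proposition~\ref{P.lscit} and the discussion preceding it. Concretely, I would fix the Beltrami field $u_0=U_{\vp p}$ and the data $\cJ,V_0$ furnished by Proposition~\ref{P.toriBelt}, so that $u_0$ carries a set of inner measure strictly larger than $V_0$ made up of Diophantine invariant tori of knot type $[\cT]$ with frequencies in $(\om_1,\om_2)$ and twists in $(\tau_1,\tau_2)$.

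First I would choose $R_0>0$ large enough that this entire set of invariant tori lies inside $B_{R_0}$. Because the invariant tori produced by the construction of~\cite{Acta} are nested and foliate, on a positive-measure subset, a solid torus whose boundary may itself be taken to be a Diophantine invariant torus with parameters in $\cJ$, this bounding solid torus witnesses $N^{\mathrm{t}}_{u_0}(R_0;[\cT],\cJ,V_0)\geq 1$, while the positive-measure set it encloses gives $V^{\mathrm{t}}_{u_0}(R_0;[\cT],\cJ)>V_0$. It then remains to transfer both lower bounds to every divergence-free field $w$ that is $C^k$-close to $u_0$ on $B_{R_0}$, with $k\geq 4$.

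For the counting functional, Proposition~\ref{P.lscit} gives lower semicontinuity of $w\mapsto N^{\mathrm{t}}_w(R_0;[\cT],\cJ,V_0)$ in the $C^k$ compact-open topology; as this functional is integer-valued, there is $\delta_1>0$ such that $\|w-u_0\|_{C^k(B_{R_0})}<\delta_1$ forces $N^{\mathrm{t}}_w(R_0;[\cT],\cJ,V_0)\geq N^{\mathrm{t}}_{u_0}(R_0;[\cT],\cJ,V_0)\geq 1$. For the volume I would instead invoke the sharper estimate stated just before Proposition~\ref{P.lscit}, namely $V^{\mathrm{t}}_w(R_0;[\cT],\cJ)\geq V^{\mathrm{t}}_{u_0}(R_0;[\cT],\cJ)-C\de^{1/2}$ whenever $\|w-u_0\|_{C^4(B_{R_0})}<\de$; since $V^{\mathrm{t}}_{u_0}(R_0;[\cT],\cJ)>V_0$, picking $\delta_2>0$ with $C\delta_2^{1/2}<V_0/2$ yields $V^{\mathrm{t}}_w(R_0;[\cT],\cJ)>V_0/2$. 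Setting $\delta:=\min\{\delta_1,\delta_2\}$ then establishes both inequalities simultaneously.

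I expect the only genuinely delicate points to be bookkeeping rather than conceptual. The mild obstacle is the $C\de^{1/2}$ loss of volume in the KAM robustness estimate: this is exactly what prevents one from recovering the full volume $V_0$ for perturbed fields, and it is the reason the conclusion must be phrased as $V^{\mathrm{t}}_w(R_0;[\cT],\cJ)>V_0/2$ rather than $>V_0$. A secondary point to verify is that the enclosing solid torus extracted from Proposition~\ref{P.toriBelt} really has a Diophantine \emph{invariant} torus as its boundary (with parameters in $\cJ$), rather than a merely nearby non-invariant surface; this is ensured by the nested family structure of the tori in the construction of~\cite{Acta}.
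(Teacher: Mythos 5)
Your proposal is correct and follows essentially the same route as the paper: fix $R_0$ large enough that the invariant-torus set of $u_0$ from Proposition~\ref{P.toriBelt} lies in $B_{R_0}$, then transfer both lower bounds to nearby divergence-free fields via the (quantitative) lower semicontinuity recorded in and before Proposition~\ref{P.lscit}. Your explicit use of the $C\de^{1/2}$ volume-loss estimate, and your remark on extracting a bounding solid torus with Diophantine invariant boundary from the nested KAM family, are just spelled-out versions of what the paper's one-line proof leaves implicit.
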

\begin{proof}
Taking $R_0$ large enough so that the aforementioned set of invariant tori of $u_0$ is contained in $B_{R_0}$, the result is a straightforward consequence of the lower semicontinuity of $N^{\mathrm{t}}_u(R;[\cT],\cJ,V_0)$ and $V^{\mathrm{t}}_u(R;[\cT],\cJ)$, cf. Proposition~\ref{P.lscit}.
\end{proof}

\section{A Beltrami field on~$\RR^3$ that is stably chaotic}
\label{S.chaos}

Our objective in this section is to construct a Beltrami field $u$ in
$\RR^3$ that exhibits a horseshoe, that is, a compact (normally) hyperbolic
invariant set on which the time-$T$ flow of $u$ (or of a suitable
reparametrization thereof) is topologically conjugate to a Bernoulli
shift. It is standard that a horseshoe of a three-dimensional flow is a connected branched surface, and that the existence of a horseshoe is stable in the
sense that any other field that is $C^1$-close to $u$ has a horseshoe too~\cite[Theorem 5.1.2]{GH}. Moreover, the existence of a horseshoe implies that the field
has positive topological entropy; recall that the topological
entropy of the field, which we denote as $\htop(u)$, is defined as the entropy of its time-$1$
flow. Summarizing, we have the following result for the number of
(pairwise disjoint) horseshoes of $u$ contained in $B_R$, $N^{\mathrm{h}}_u(R)$:

\begin{proposition}\label{P.lsch}
The functional $u\mapsto N^{\mathrm{h}}_u(R)$ is
lower semicontinuous in the $C^k$~compact open topology for vector
fields, for any $k\geq 1$. Moreover, if $u$ has a horseshoe, its topological entropy is positive.
\end{proposition}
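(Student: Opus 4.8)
The plan is to mirror the proofs of Propositions~\ref{P.lscpo} and~\ref{P.lscit}, replacing the hyperbolic permanence theorem and the KAM theorem by the structural stability of horseshoes recorded in~\cite[Theorem 5.1.2]{GH}. For the lower semicontinuity, suppose $\Nh(R)=m$, so that $u$ has pairwise disjoint horseshoes $\Lambda_1,\dots,\Lambda_m\subset B_R$. Being compact and mutually disjoint, they are separated by some distance $\eta_0>0$. By the stability of horseshoes, for each~$i$ there is a $\delta_i>0$ so that every field~$v$ with $\|u-v\|_{C^1(B_R)}<\delta_i$ has a horseshoe~$\Lambda_i^v$; moreover the hyperbolic continuation depends continuously, in the Hausdorff distance, on the field, so after shrinking~$\delta_i$ we may assume that $\Lambda_i^v$ lies in the $\eta_0/3$-neighborhood of~$\Lambda_i$. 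Setting $\delta:=\min_i\delta_i$, whenever $\|u-v\|_{C^1(B_R)}<\delta$ the sets $\Lambda_1^v,\dots,\Lambda_m^v$ are contained in~$B_R$ and pairwise disjoint, so $N^{\mathrm h}_v(R)\ge m=\Nh(R)$. Since convergence in the $C^k$ compact-open topology with $k\ge1$ entails $C^1$ convergence on~$B_R$, this proves that $u\mapsto\Nh(R)$ is lower semicontinuous.

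For the entropy statement, recall that a horseshoe carries a compact invariant set on which the time-$T$ flow, or a positive reparametrization thereof, is topologically conjugate to a Bernoulli shift on finitely many symbols. A nontrivial Bernoulli shift has positive topological entropy, so by invariance of entropy under topological conjugacy and its monotonicity under restriction to a closed invariant subset, the corresponding time-$T$ map has positive topological entropy. Using that the entropy of the time-$t$ map of a flow satisfies $h_{\mathrm{top}}(\phi_t)=|t|\,h_{\mathrm{top}}(\phi_1)$, and that positivity of the entropy of a flow is unaffected by a positive time reparametrization, we conclude that $\htop(u)=h_{\mathrm{top}}(\phi_1^u)>0$.

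I expect the delicate points to be the following. First, in the lower-semicontinuity argument one needs not merely the existence of a nearby horseshoe but the Hausdorff-continuity of the hyperbolic continuation, which is what keeps the continued horseshoes pairwise disjoint and inside~$B_R$; this is precisely the feature that lets the simple separation-by-$\eta_0$ argument go through, just as the tubular-neighborhood estimates did for periodic orbits and invariant tori. Second, in the entropy statement the only genuine subtlety is the reparametrization clause: one must invoke the standard fact that time-changing a flow by a positive function preserves the dichotomy between zero and positive topological entropy, so that the Bernoulli factor of the possibly reparametrized flow still forces $\htop(u)>0$.
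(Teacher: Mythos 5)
Your proposal is correct and follows essentially the same route as the paper, which in fact only sketches this result: the paper justifies the proposition by the preceding paragraph, citing the $C^1$-stability of horseshoes~\cite[Theorem 5.1.2]{GH} for the lower semicontinuity and the standard fact that a Bernoulli factor forces positive topological entropy. Your write-up merely fills in the details the paper leaves implicit (the $\eta_0$-separation argument keeping the continued horseshoes disjoint, and the Abramov/reparametrization bookkeeping for the entropy); the only cosmetic refinement needed is to choose $\eta_0$ also smaller than the distance from the horseshoes to $\partial B_R$ so that their continuations remain inside the ball.
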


In short, the basic idea to construct a Beltrami field with a horseshoe, is to construct first ``an integrable'' Beltrami field
having a heteroclinic cycle between two hyperbolic periodic orbits,
which we subsequently perturb within the Beltrami class to produce a
transverse heteroclinic intersection. By the Birkhoff--Smale theorem,
this ensures the existence of horseshoe-type dynamics.

\begin{proposition}\label{P:chaos}
There exists a Hermitian finite linear combination of spherical
harmonics~$\vp$ such that the Beltrami field $u_0:=U_{\vp p}$ exhibits a horseshoe. In other words, $N^{\mathrm{h}}_{u_0}(R_0)\geq 1$ for all large enough $R_0>0$.
\end{proposition}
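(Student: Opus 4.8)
The plan is to follow the heteroclinic-cycle strategy outlined in the text: first build an explicit ``integrable'' model Beltrami field possessing two hyperbolic periodic orbits joined by a degenerate heteroclinic connection, then break the connection transversally by a Melnikov-type perturbation, and finally invoke Proposition~\ref{P.approx} to replace the model by a genuine field of the form $U_{\vp p}$ while retaining the horseshoe. Concretely, I would first use the Cauchy--Kovalevskaya theorem for Beltrami fields (\cite[Theorem 3.1]{Annals}, as employed in Proposition~\ref{P.perBelt}) to prescribe, on a suitable analytic surface, data that forces two hyperbolic periodic orbits~$\ga_\pm$ together with a two-dimensional invariant manifold realizing a heteroclinic cycle between them. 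The natural model to imitate is a planar (or axisymmetric) Hamiltonian phase portrait with a saddle connection, lifted to a three-dimensional divergence-free field by adjoining a rotation in the transverse angle; the key is that the Beltrami constraint $\curl u = u$ be compatible with such local data, which the Cauchy--Kovalevskaya machinery guarantees on a neighborhood of the cycle.

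Next I would introduce a one-parameter family of admissible perturbations within the Beltrami class and compute the associated Melnikov function measuring the splitting of the stable and unstable manifolds of~$\ga_\pm$. The perturbations must themselves be Beltrami fields (so that the perturbed field still satisfies $\curl u = u$), and here the freedom afforded by Proposition~\ref{P.p(xi)}---that any $U_{\vp p}$ with Hermitian $\vp$ is automatically Beltrami---lets me parametrize a rich family of admissible variations. I would show that the Melnikov integral does not vanish identically in the perturbation parameters, so that for a suitable choice the stable and unstable manifolds intersect transversally, producing a transverse heteroclinic orbit. By the Birkhoff--Smale homoclinic theorem (see~\cite{GH}), the existence of such a transverse heteroclinic cycle between hyperbolic periodic orbits yields a horseshoe, i.e. a compact hyperbolic invariant set on which the time-$T$ flow is conjugate to a Bernoulli shift.

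The final step is a global approximation argument. The field constructed above is a priori only defined and Beltrami on a neighborhood~$K$ of the heteroclinic cycle; to obtain a global field of the exact form $u_0 = U_{\vp p}$, I would apply Proposition~\ref{P.approx} with $k$ large enough (say $k\geq 4$) to produce a Hermitian finite linear combination of spherical harmonics~$\vp$ such that $\|U_{\vp p}-v\|_{C^k(K)}$ is arbitrarily small. Since a horseshoe is $C^1$-structurally stable (\cite[Theorem 5.1.2]{GH}, as recalled before Proposition~\ref{P.lsch}), the approximating field~$U_{\vp p}$ inherits a horseshoe, and taking $R_0$ large enough that this invariant set lies in $B_{R_0}$ gives $N^{\mathrm{h}}_{u_0}(R_0)\geq 1$.

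The main obstacle, I expect, is the middle step: verifying that the Melnikov function can be made nonzero \emph{using perturbations that remain within the Beltrami class}. For a general divergence-free field one has enormous freedom in choosing perturbations, but the rigid constraint $\curl u = u$ severely restricts the admissible directions, and it is conceivable that all Beltrami perturbations preserve some hidden symmetry forcing the splitting to vanish. Overcoming this requires exhibiting an explicit Beltrami perturbation whose Melnikov integral is provably nonzero---a genuinely computational and delicate point, and precisely the gap that (as the Introduction stresses) was not previously filled, since the known non-integrable ABC flows are chaotic only on~$\TT^3$ and not on~$\RR^3$.
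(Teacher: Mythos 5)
Your overall strategy --- an ``integrable'' Beltrami field with a heteroclinic cycle between two hyperbolic periodic orbits, a Melnikov perturbation within the Beltrami class, Birkhoff--Smale, and finally Proposition~\ref{P.approx} together with $C^1$-structural stability of horseshoes --- is exactly the strategy of the paper, and your final approximation step is correct as stated. But the two steps that carry the actual mathematical content have genuine gaps. First, the Cauchy--Kovalevskaya theorem cannot do what you ask of it: it produces a Beltrami field only on a small, uncontrolled neighborhood of the analytic Cauchy surface, whereas a heteroclinic cycle is a \emph{global} structure, requiring the stable manifold of one orbit to coincide with the unstable manifold of the other along their entire extent, far from any single admissible Cauchy surface. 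The cycle itself is a singular surface invariant under the field, so it is not a surface on which Cauchy data can be freely prescribed; and data prescribed on a strip near one periodic orbit (as in Proposition~\ref{P.perBelt}) says nothing about whether the invariant manifolds reconnect. The paper avoids this issue entirely by writing down an \emph{explicit} axisymmetric Beltrami field, $v=\frac1r\big(\partial_r\psi\, E_z-\partial_z\psi\, E_r+\frac\psi r E_\te\big)$ with stream function $\psi=\cos z+3rJ_1(r)$, for which the first integral~$\psi$ and the symmetry $\psi(-z,r)=\psi(z,r)$ make the two saddles $p_\pm$ and their connecting separatrices verifiable by direct inspection of the level set $\{\psi=c_0\}$; axisymmetry (equivalently, the existence of the first integral) is what makes the non-generic saddle connection checkable at all.

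Second, you leave the Melnikov non-vanishing --- which you yourself flag as ``precisely the gap'' --- unproved, and it is not a routine verification: it is the heart of the proposition. The paper must (i) establish a Melnikov formula valid for the non-standard symplectic form $\om=r^{-1}dz\wedge dr$ that arises after reducing the flow to a non-autonomous planar system (Lemma~\ref{L:melni}); (ii) exhibit a concrete Beltrami perturbation, $w=J_1(r)\sin\te\,E_z+\frac{J_1(r)}{r}\cos\te\, E_r-\frac{J_1'(r)\sin\te}{r}E_\te$; (iii) exploit the symmetry $R_k(t)=R_k(-t)$, $Z_k(t)=-Z_k(-t)$ to reduce the Melnikov functions to $M_k(t_0)=b_k\cos t_0$; and (iv) evaluate $b_1,b_2$ (numerically) to confirm they are nonzero for \emph{both} separatrices $k=1,2$, since one needs both connections broken transversally to obtain transverse homoclinic points via~\cite{Wi} and only then invoke Birkhoff--Smale. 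Without an explicit admissible perturbation and an actual evaluation of these integrals, your proposal is a plan rather than a proof; with them, it becomes the paper's proof.
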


\begin{proof}
Let us take cylindrical coordinates $(z,r,\te)\in \RR\times
\RR^+\times \TT$, with $\TT:=\RR/2\pi\mathbb Z$, defined as
\[
z:= x_3 \,,\qquad (r\cos\te,r\sin\te):=(x_1,x_2)\,.
\]
We now consider the axisymmetric vector field $v$ in $\RR^3$ given by
\begin{equation}\label{eq_axi}
v:=\frac1r\Big(\partial_r\psi\,
E_z-\partial_z\psi\,E_r+\frac{\psi}{r}\,E_\te\Big)\,.
\end{equation}
Here
\[
\psi:=\cos z+3rJ_1(r)
\]
with $J_1$ being the Bessel function of the first kind and order~1,
and the vector fields
\[
E_z:= (0,0,1)\,,\qquad E_r:= \frac1r(x_1,x_2,0)\,,\qquad E_\te:= (-x_2,x_1,0)\,,
\]
which are often denoted by $\pd_z$, $\pd_r$, $\pd_\te$ in the
dynamical systems literature, have been chosen so that
\[
E_z\cdot\nabla\phi=\pd_z\phi\,,\qquad
E_r\cdot\nabla\phi=\pd_r\phi\,,\qquad E_\te\cdot\nabla\phi=\pd_\te\phi
\]
for any function~$\phi$. Notice that $v\cdot \nabla\psi=0$, so the
scalar function~$\psi$ is a first integral of $v$. This means that the
trajectories of the field~$v$ are tangent to the level sets of $\psi$.

The vector field $v$ is not defined on the $z$-axis, so we shall
consider the domain in Euclidean 3-space
$$\Omega:=\left\{(z,r,\te):\,
  (z,r)\in\cD\,,\; \te\in\TT\right\}\,,$$
where $\cD$ is the domain in the $(z,r)$-plane given by
$$\cD:=\left\{(z,r):\,-10<z<10,\,
\frac{9}{10}<r<\frac{18}{5}\right\}\,.$$
The reason for choosing this particular domain of
$\RR^3$ will become clear later in the proof; for the time being, let
us just note that $\psi(z,r)>0$ if $(z,r)\in\cD$.

Also, observe that, away
from the axis $r=0$, the vector field~$v$ is smooth and satisfies the
Beltrami field equation $\curl v=v$.

We claim that, in $\Omega$, $v$ has two hyperbolic periodic orbits joined by a heteroclinic cycle. Indeed, noticing that
\[
(\partial_z\psi,\partial_r\psi)=(-\sin z,3rJ_0(r))\,,
\]
where we have used the identity $\pd_r[rJ_1(r)]=rJ_0(r)$, it follows that
the points $p_{\pm}:=(\pm \pi,j_{0,1})\in \cD$ are critical points
of~$\psi$. Here $j_{0,1}=2.4048\dots$ is the first zero of the Bessel
function~$J_0$. Plugging this fact in Equation~\eqref{eq_axi}, this
implies that, on the circles in 3-space
\[
\ga_\pm:= \{(z,r,\te): (z,r)=p_\pm\,,\; \te\in\TT\}\,,
\]
the field~$v$ takes the form
\[
v(p_\pm,\te)=\frac{c_0}{j_{0,1}^2}\,E_\te
\]
with $c_0:=3j_{0,1}J_1(j_{0,1})-1>0$. Therefore, we conclude that the circles
$\gamma_\pm$ are periodic orbits of~$v$ contained in~$\Omega$.

It is standard that the stability of these periodic orbits can be
analyzed using the associated normal variational equation. Denoting
by $(v_z,v_r,v_\te)$ the components of the field~$v$ in the basis
$\{E_z,E_r,E_\te\}$, this is the linear ODE
\[
\dot\eta = A\eta\,,
\]
where $\eta$ takes values in~$\RR^2$ and~$A$ is the
constant matrix
\[
A:= \frac{\pd(v_z,v_r)}{\pd(z,r)}\bigg|_{(z,r)=p_\pm}= \left(
  \begin{array}{cc}
    0 & 3J'_0(j_{0,1}) \\
    -{1}/{j_{0,1}} &0 \\
  \end{array}
\right)\,.
\]
The Lyapunov exponents of the periodic orbit~$\ga_\pm$ are the
eigenvalues of the matrix~$A$. Therefore, since $J'_0(j_{0,1})<0$, these
periodic orbits have a positive and a negative Lyapunov exponent, so
they are hyperbolic periodic orbits of saddle type.

Since $\psi$ is a first integral of $v$ and $\psi(p_\pm)=c_0$, the set
\[
\{(z,r,\te): \psi(z,r)=c_0\}
\]
is an invariant singular surface of the vector field $v$. This set
contains two regular surfaces $\Gamma_1$ and $\Gamma_2$ diffeomorphic
to a cylinder. We label them so $\Gamma_1$ is contained in the half
space~$\{r\leq j_{0,1}\}$ and~$\Gamma_2$ in~$\{r\geq j_{0,1}\}$. The
boundaries of these cylinders are the periodic
orbits~$\gamma_\pm$. The surface~$\Gamma_1$ is the stable manifold
of~$\gamma_+$ that coincides with an unstable manifold of~$\gamma_-$,
while $\Gamma_2$ is the unstable manifold of~$\gamma_+$ that coincides
with a stable manifold of~$\gamma_-$. Hence the union $\Gamma_1\cup
\Gamma_2$ of both cylinders then form an heteroclinic cycle between
the periodic orbits~$\ga_+$ and~$\ga_-$, and one can see that it is contained in~$\Omega$.

Let us now perturb the Beltrami field $v$ in $\Omega$ by adding a
vector field $w$ (to be fixed later) that also satisfies the Beltrami
field equation $\curl w=w$. Our goal is to break the heteroclinic
cycle $\Gamma_1\cup \Gamma_2$ in order to produce transverse
intersections of the stable and unstable manifolds of
$\gamma_+^\varepsilon$ and $\gamma_-^\varepsilon$, where
$\gamma_{\pm}^\varepsilon$ denote the hyperbolic periodic
orbits of the perturbed vector field
\[
X:=v+\varepsilon w=\left(\frac{\partial_r\psi}{r}+\varepsilon w_z\right)\, E_z+\left (-\frac{\partial_z\psi}{r}+\varepsilon w_r\right)\,E_r+\left (\frac{\psi}{r^2}+\varepsilon w_\te\right)\,E_\te\,.
\]
As before, $(w_z,w_r,w_\te)$ denote the components of the vector field $w$ in
the basis $\{E_z,E_r,E_\te\}$, which are functions of all three cylindrical coordinates~$(z,r,\te)$. If $\varepsilon>0$ is small enough, the
$\te$-component of $X$ is positive on the domain $\Omega$, so we can
divide $X$ by the factor $X_\te:=\frac{\psi}{r^2}+\varepsilon w_\te>0$
to obtain another vector field $Y$ that has the same integral curves
up to a reparametrization:
\begin{equation}\label{eq_Y}
Y:=\frac{X}{X_\te}=\frac{r\partial_r\psi+\varepsilon r^2w_z}{\psi+\varepsilon r^2w_\te}\, E_z+\frac{-r\partial_z\psi+\varepsilon r^2w_r}{\psi+\varepsilon r^2w_\te}\,E_r+E_\te\,.
\end{equation}

Substituting the expression of $\psi(z,r)$ and expanding in the small
parameter $\varepsilon$, the analysis of the integral curves of $Y$
reduces to that of the following non-autonomous system of ODEs in the
planar domain $\cD$:
\begin{align}\label{eq_pert}
&\frac{d z}{dt}=\frac{3r^2J_0(r)}{\psi(z,r)}+\varepsilon \left(\frac{r^2w_z(z,r,t)}{\psi(z,r)}-\frac{3r^4J_0(r)w_\te(z,r,t)}{\psi(z,r)^2}\right)+O(\varepsilon^2)\,,\\\label{eq_pert2}
&\frac{d r}{dt}=\frac{r\sin z}{\psi(z,r)}+\varepsilon\left
                                                                                                                                                                         (\frac{r^2w_r(z,r,t)}{\psi(z,r)}-\frac{r^3\sin z \, w_\te(z,r,t)}{\psi(z,r)^2}\right)+O(\varepsilon^2)\,.
\end{align}
Notice that the dependence on $t$ is $2\pi$-periodic, and that we have
replaced~$\te$ by~$t$ in the function $w_z(z,r,\te)$ (and
similarly~$w_r,w_\te$) because the $\te$-component of the vector
field~$Y$ is~1. When
$\varepsilon=0$, one has
\begin{align}\label{eq_unpert}
&\dot z=\frac{3r^2J_0(r)}{\psi(z,r)}\,,\\\label{eq_unpert2}
&\dot r=\frac{r\sin z}{\psi(z,r)}\,.
\end{align}
Hence the unperturbed system is Hamiltonian with symplectic form
$\omega:=r^{-1}dz\wedge dr$ and Hamiltonian
function $H(z,r):=\log \psi(z,r)$. The periodic orbits $\gamma_\pm$ of
$v$ and their heteroclinic cycle $\Gamma_1\cup\Gamma_2$ correspond to
the (hyperbolic) fixed points $p_\pm$ of the unperturbed system joined
by two heteroclinic connections $\widetilde \Gamma_k:=\Gamma_k\cap
\{\te=0\}$, $k=1,2$. These are precisely the two pieces of the level
curve $\{H(z,r)=\log c_0\}$ that are contained in~$\cD$. Let us denote
by
$$\gamma_k(t)=(Z_k(t;0,r_k),R_k(t;0,r_k))$$
the integral
curves of the separatrices that solve Equations~\eqref{eq_unpert}-\eqref{eq_unpert2} with
initial conditions $(0,r_k)\in \widetilde \Gamma_k$. Of course, the
closure of the set ${\{\gamma_k(t):t\in\RR\}}$ is $\widetilde
\Gamma_k$, and the stability analysis of the periodic integral curves~$\gamma_\pm$
readily implies that $\lim_{t\to\pm(-1)^{k+1}\infty}\gamma_k(t)=p_\pm$.

By the implicit function theorem, the perturbed system~\eqref{eq_pert}-\eqref{eq_pert2}
has exactly two hyperbolic fixed points $p_{\pm}^\varepsilon\in \cD$ so that $p_{\pm}^\varepsilon \to p_\pm$ as $\varepsilon\to 0$. The technical tool to prove that the unstable (resp. stable) manifold of $p_+^\varepsilon$ and the stable (resp. unstable) manifold of $p_-^\varepsilon$ intersect transversely when $\varepsilon>0$ is small is the Melnikov function. We define the vector fields $Y_0$, $Y_1$, respectively, as the unperturbed system and the first order in $\varepsilon$ perturbation, i.e.,
\begin{align*}
&Y_0:= \frac{3r^2J_0(r)}{\psi(z,r)}\,E_z+\frac{r\sin z}{\psi(z,r)}\,E_r\,,\\
&Y_1:= \left(\frac{r^2w_z}{\psi(z,r)}-\frac{3r^4J_0(r)w_\te}{\psi(z,r)^2}\right)\,E_z+\left(\frac{r^2w_r}{\psi(z,r)}-\frac{r^3\sin z w_\te}{\psi(z,r)^2}\right)\,E_r\,.
\end{align*}
Since the unperturbed system is Hamiltonian, we can apply
Lemma~\ref{L:melni} below (which is a variation on known results in
Melnikov theory) to conclude that if the Melnikov functions
\begin{equation}\label{eq:melni}
M_k(t_0):=\int_{-\infty}^\infty \om(Y_0,Y_1)|_{\gamma_k(t-t_0)}\, dt\,,
\end{equation}
have simple zeros for each $k=1,2$, then the aforementioned transverse
intersections exist, and that actually the heteroclinic connections
intersect at infinitely many points. The integrand $\om(Y_0,Y_1)$
denotes the action of the symplectic 2-form~$\om$ on the vector fields
$Y_0,Y_1$, evaluated on the integral curve
$\gamma_k(t-t_0)$. It is standard that the improper integral
in the definition of the Melnikov functions is absolutely convergent
because of the hyperbolicity of the fixed points joined by the
separatrices (see e.g.~\cite[Section 4.5]{GH}). Also notice that
although~\cite[Section 4.5]{GH} concerns transverse intersections of
homoclinic connections, the analysis applies verbatim to transverse intersections of heteroclinic connections.

More explicitly, the Melnikov functions are given by
\[
M_k(t_0)=\frac1{c_0^2}\int_{-\infty}^\infty R_k(t)^2\big[w_z(Z_k(t),R_k(t),t)\sin Z_k(t)-3R_k(t)J_0(R_k(t))w_r(Z_k(t),R_k(t),t)\big]\,dt\,,
\]
where $R_k(t)\equiv R_k(t;0,r_k)$ and $Z_k(t)\equiv Z_k(t;0,r_k)$. It is well known that the
existence of transverse intersections is independent of the choice of initial condition.

To analyze these Melnikov integrals, let us now choose the particular perturbation
\begin{equation}\label{eq_w}
w=J_1(r)\sin \te\, E_z+\frac{J_1(r)}{r}\cos \theta\, E_r-\frac{J_1'(r)\sin\theta}{r}\,E_\theta\,.
\end{equation}
It is easy to check that $\curl w=w$ in $\RR^3$; in fact
$w=(\curl\curl +\curl)(J_0(r),0,0)$ (or, to put it differently,
$w=U_{\vp' q(\xi_1)^{-1} p}$, where the distribution~$\vp'$ on the sphere~$\S$ is the Lebesgue measure
of the equator, normalized to unit mass). With this choice, the Melnikov functions take the form
\begin{align*}
c_0^2M_k(t_0)&=\int_{-\infty}^\infty R_k(t)^2\big[J_1(R_k(t))\sin Z_k(t)\sin (t+t_0)-3J_0(R_k(t))J_1(R_k(t))\cos (t+t_0)\big]\,dt \\
&=: a_k\sin t_0 +b_k \cos t_0\,,
\end{align*}
where the constants $a_k,b_k$ are given by the integrals
\begin{align*}
a_k&=\int_{-\infty}^\infty R_k(t)^2\big[J_1(R_k(t))\sin Z_k(t)\cos t+3J_0(R_k(t))J_1(R_k(t))\sin t\big]\,dt\,,\\
b_k&=\int_{-\infty}^\infty R_k(t)^2\big[J_1(R_k(t))\sin Z_k(t)\sin t-3J_0(R_k(t))J_1(R_k(t))\cos t\big]\,dt\,.
\end{align*}
Since the Hamiltonian function has the symmetry $H(-z,r)=H(z,r)$, it follows that $R_k(t)=R_k(-t)$ and $Z_k(t)=-Z_k(-t)$. This immediately yields that $a_1=a_2=0$. Moreover, it is not hard to compute the constants $b_1$ and $b_2$ numerically:
\[
\quad b_1=3.5508\dots\,,\quad b_2=0.2497\dots
\]

Therefore, the function $M_k(t_0)=b_k\cos t_0$ is a nonzero multiple
of the cosine, so it obviously has exactly two zeros in
the interval $[0,2\pi)$, which are nondegenerate. It then follows from
Lemma~\ref{L:melni} below that the two heteroclinic connections joining $p_{\pm}^\varepsilon$ intersect transversely. In turn, this implies~\cite[Theorem 26.1.3]{Wi} that each hyperbolic fixed point $p_{\pm}^\varepsilon$ has transverse homoclinic intersections, so
by the Birkhoff--Smale theorem~\cite[Theorem 5.3.5]{GH} the perturbed
system~\eqref{eq_pert}-\eqref{eq_pert2} (with $w$ given by Equation~\eqref{eq_w}) has a
compact hyperbolic invariant set on which the dynamics is
topologically conjugate to a Bernoulli shift. This set is contained in
a neighborhood of the heteroclinic cycle $\widetilde \Gamma_1 \cup
\widetilde \Gamma_2$, and hence in the planar domain $\cD$ where the
system is defined. This immediately implies that the vector field~$Y$
defined in Equation~\eqref{eq_Y}, which is the suspension of the
non-autonomous planar system~\eqref{eq_pert}, has a compact normally
hyperbolic invariant set~$K$ on which its time-$T$ flow is
topologically conjugate to a Bernoulli shift, where $T:= 2\pi N$ for
some positive integer $N>0$. The invariant set~$K$ is contained
in~$\Omega$ because it lies in a small neighborhood of the invariant
set $\Gamma_1\cup \Gamma_2$. Since the integral curves of~$X$ and~$Y$
are the same, up to a reparametrization, $K$~is also a chaotic invariant set of the Beltrami field~$X$ in~$\Omega$.

Finally, since $\RR^3\backslash \overline \Omega$ is connected, and of course the vector field $X$ satisfies the Beltrami equation in an open neighborhood of $\overline \Omega$, for each $\delta>0$, Proposition~\ref{P.approx} shows that
there is a Hermitian finite linear combination of spherical
harmonics~$\vp$ such that
\[
\|X-U_{\vp p}\|_{C^1(\Omega)}<\delta\,.
\]
If $\delta$ is small enough, the stability of transverse intersections
implies that the Beltrami field $U_{\vp p}$ has a compact chaotic
invariant set $K_\delta$ in a small neighborhood of~$K$ on which a suitable
reparametrization of its time-$T$ flow is conjugate to a Bernoulli shift, so the
proposition follows.
\end{proof}
\begin{corollary}\label{C.horseBelt}
There exists $R_0>0$ and $\delta>0$ such that $N^{\mathrm{h}}_w(R_0)\geq 1$ for any vector field $w$ such that $\|w-u_0\|_{C^k(B_{R_0})}<\delta$, provided that $k\geq 1$.
\end{corollary}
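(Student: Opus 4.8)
The plan is to obtain the corollary as an immediate consequence of the stable-chaos construction in Proposition~\ref{P:chaos} together with the lower semicontinuity of the horseshoe-counting functional recorded in Proposition~\ref{P.lsch}, following verbatim the pattern already used for Corollaries~\ref{C.perBelt} and~\ref{C.torBelt}.

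First I would invoke Proposition~\ref{P:chaos} to fix the Hermitian finite linear combination of spherical harmonics $\vp$ for which the Beltrami field $u_0=U_{\vp p}$ carries a horseshoe, that is, a compact normally hyperbolic invariant set $K$ on which a reparametrization of the time-$T$ flow is topologically conjugate to a Bernoulli shift. Since that construction places $K$ inside the bounded domain $\Omega$, I would choose $R_0>0$ large enough that $\overline\Omega\subset B_{R_0}$; as $K$ is then a horseshoe of $u_0$ contained in $B_{R_0}$, this already gives $N^{\mathrm{h}}_{u_0}(R_0)\geq 1$.

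Next I would apply Proposition~\ref{P.lsch}, which states that $u\mapsto N^{\mathrm{h}}_u(R_0)$ is lower semicontinuous in the $C^k$ compact-open topology for every $k\geq 1$. Because this functional takes values in the nonnegative integers, lower semicontinuity at the point $u_0$ is equivalent to the existence of a $\delta>0$ such that every $w$ with $\|w-u_0\|_{C^k(B_{R_0})}<\delta$ satisfies $N^{\mathrm{h}}_w(R_0)\geq N^{\mathrm{h}}_{u_0}(R_0)\geq 1$, which is exactly the assertion of the corollary.

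I do not expect a genuine obstacle here, since all the substantive content has already been established upstream: the hard part is the existence of a Beltrami field with a horseshoe (Proposition~\ref{P:chaos}) and the robustness of horseshoes under $C^1$-small perturbations, which comes from the structural stability of hyperbolic invariant sets~\cite[Theorem 5.1.2]{GH} and is precisely what Proposition~\ref{P.lsch} packages. The only minor point worth recording is that the persistent horseshoe of $w$ lies in a small neighborhood of $K$ and therefore remains inside $B_{R_0}$, so no adjustment of $R_0$ is needed as $w$ ranges over the $\delta$-ball around $u_0$.
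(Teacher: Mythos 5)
Your proof is correct and follows exactly the paper's own argument: fix $R_0$ so that the horseshoe produced by Proposition~\ref{P:chaos} lies in $B_{R_0}$, then invoke the lower semicontinuity of $u\mapsto N^{\mathrm{h}}_u(R_0)$ from Proposition~\ref{P.lsch} (using that the functional is integer-valued) to get the uniform $\delta$. Nothing to add.
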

\begin{proof}
Taking $R_0$ so that the horseshoe of $u_0$ is contained in $B_{R_0}$, the result is a straightforward consequence of the lower semicontinuity of $N^{\mathrm{h}}_u(R)$, cf. Proposition~\ref{P.lsch}.
\end{proof}

To conclude, the following lemma gives the formula for the Melnikov
function that we employed in the proof of Proposition~\ref{P:chaos}
above. This is an expression for the Melnikov function of
perturbations of a planar system that is Hamiltonian with respect to
an arbitrary symplectic form. This is a minor generalization of the
well-known formulas~\cite[Theorem 4.5.3]{GH}  and~\cite[Equation~(23)]{Ho}, which assume that the symplectic form is the standard one.

\begin{lemma}\label{L:melni}
Let $Y_0$ be a smooth Hamiltonian vector field defined on a domain
$\cD\subset \RR^2$ with Hamiltonian function~$H$ and symplectic
form~$\omega$. Assume that this system has two hyperbolic fixed points
$p_\pm$ joined by a heteroclinic connection $\widetilde \Gamma$. Take
a smooth non-autonomous planar field~$Y_1$, which we assume
$2\pi$-periodic in time, and consider the perturbed system
$Y_0+\varepsilon Y_1+O(\varepsilon^2)$. Then the simple zeros of the Melnikov function
\[
M(t_0):=\int_{-\infty}^\infty \omega(Y_0,Y_1)|_{\gamma(t-t_0;p_0)}\, dt\,,
\]
where the integrand is evaluated at the integral curve
$\gamma(t-t_0;p_0)$ of~$Y_0$ parametrizing the separatrix $\widetilde
\Gamma$, give rise to a transverse heteroclinic intersection of the
perturbed system, for any small enough~$\ep$.
\end{lemma}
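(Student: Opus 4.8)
The plan is to carry out the classical Melnikov computation directly, but to keep the symplectic form~$\omega$ as the pairing with which one measures the gap between the perturbed invariant manifolds; the only genuinely new ingredient compared with the standard statement~\cite[Theorem 4.5.3]{GH} is a single cancellation that encodes the hypothesis that $Y_0$ is Hamiltonian with respect to a possibly non-constant area form. Writing $\omega=F\,dx_1\wedge dx_2$ with $F>0$, the assumption that $Y_0$ is Hamiltonian for $(\omega,H)$ implies in particular that its flow preserves~$\omega$, that is, $\Div(FY_0)=0$. I would fix the heteroclinic orbit $\gamma(t):=\gamma(t;p_0)$ of~$Y_0$ that parametrizes~$\widetilde\Gamma$, with $\gamma(t)\to p_\pm$ as $t\to\pm\infty$ (up to relabeling), and, using the persistence of hyperbolic fixed points, let $p_\pm^\varepsilon$ be the continuations of~$p_\pm$ for the perturbed field $Y_0+\varepsilon Y_1+O(\varepsilon^2)$.

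First I would set up the standard first-order expansion of the local stable and unstable manifolds of $p_+^\varepsilon$ and $p_-^\varepsilon$ along~$\widetilde\Gamma$, writing the perturbed trajectories on the two branches as $\gamma(t-t_0)+\varepsilon\,\delta^{s,u}(t)+O(\varepsilon^2)$, where the variations $\delta^{s,u}$ solve the inhomogeneous variational equation $\dot\delta=DY_0(\gamma(t-t_0))\,\delta+Y_1(\gamma(t-t_0),t)$ with the boundary conditions $\delta^{s}(t)\to0$ as $t\to+\infty$ and $\delta^{u}(t)\to0$ as $t\to-\infty$ dictated by hyperbolicity. The separation of the two manifolds across the section through $\gamma(0)$ is then measured by the scalar $d(t_0):=\omega\big(Y_0,\delta^{u}-\delta^{s}\big)$, evaluated at the base point; this is the natural replacement for the usual wedge product, since $\omega(Y_0,\cdot)$ annihilates the flow direction and hence detects exactly the transverse component of the gap.

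The heart of the argument is the computation of the derivatives of $\Delta^{s,u}(t):=\omega\big(Y_0(\gamma(t-t_0)),\,\delta^{s,u}(t)\big)$. Setting $v(t):=Y_0(\gamma(t-t_0))$, so that $\dot v=DY_0\,v$ and $\dot\gamma=v$, and differentiating $\Delta=F\,(v\wedge\delta)$ using $\dot\delta=DY_0\,\delta+Y_1$ together with the two-dimensional identity $(DY_0\,v)\wedge\delta+v\wedge(DY_0\,\delta)=(\Tr DY_0)\,(v\wedge\delta)$, one obtains
\[
\dot\Delta=\big(\nabla F\cdot v+F\,\Div Y_0\big)(v\wedge\delta)+F\,(v\wedge Y_1)=\big(\Div(FY_0)\big)(v\wedge\delta)+\omega(Y_0,Y_1)\big|_{\gamma(t-t_0)}\,.
\]
Because $Y_0$ is Hamiltonian, the prefactor $\Div(FY_0)$ vanishes identically, so $\dot\Delta=\omega(Y_0,Y_1)|_{\gamma(t-t_0)}$. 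Since $v(t)\to0$ at the hyperbolic endpoints while $\delta^{s,u}$ stay bounded, the boundary terms $\Delta^{s}(+\infty)$ and $\Delta^{u}(-\infty)$ vanish, and integrating over the two half-lines and subtracting gives $d(t_0)=\int_{-\infty}^{\infty}\omega(Y_0,Y_1)|_{\gamma(t-t_0)}\,dt=M(t_0)$, exactly the Melnikov function in the statement. This cancellation is precisely what generalizes the constant-$F$ computation: there $\Div Y_0=0$ automatically and the $\nabla F$ term is absent, whereas here neither term vanishes but the two compensate.

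To conclude, a simple zero $t_0^*$ of~$M$ yields $d(t_0^*)=0$ with $d'(t_0^*)\neq0$ at order~$\varepsilon$, and the usual implicit-function argument then produces, for all small $\varepsilon>0$, a transverse intersection of the stable manifold of one of $p_\pm^\varepsilon$ with the unstable manifold of the other. The passage from the nonvanishing of the symplectic separation to genuine transversality of the two one-dimensional manifolds is standard and proceeds exactly as in~\cite[Section~4.5]{GH}, the only change being that the connection is heteroclinic rather than homoclinic, which, as already noted, affects nothing. I expect the main point requiring care to be the absolute convergence of the improper integral and the vanishing of the boundary terms $\Delta^{s,u}(\pm\infty)$; both follow from the exponential approach of $\gamma(t-t_0)$ to~$p_\pm$ guaranteed by hyperbolicity, together with the corresponding exponential control on $\delta^{s,u}$, just as in the constant-$F$ case. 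An equally workable alternative would be to invoke a Moser-type lemma to produce, on a neighborhood of the compact set $\widetilde\Gamma\cup\{p_\pm\}$, a symplectomorphism carrying $\omega$ to $dx_1\wedge dx_2$: since such a map conjugates flows, pushes $Y_0$ to a standard Hamiltonian field, and leaves $\omega(Y_0,Y_1)$ invariant, it reduces the claim to the classical theorem verbatim, at the cost of establishing the global existence of this change of variables.
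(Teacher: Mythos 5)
Your proof is correct, and it reaches the paper's conclusion by a more self-contained route. The paper does not redo the Melnikov computation at all: it quotes the general displacement-function formula for planar perturbations of a field that need not preserve the standard area,
\[
\Delta(t_0)=\frac{1}{|Y_0(p_0)|}\int_{-\infty}^\infty \big(Y_1\times Y_0\big)\big|_{\gamma(t-t_0)}\, e^{-\int_0^{t-t_0}\Tr DY_0(\gamma(s))\,ds}\,dt\,,
\]
from \cite[Equation~(22)]{Ho} and the proof of \cite[Theorem 4.5.3]{GH}, and then uses the Hamiltonian hypothesis exactly once: writing $\omega=\rho\,dz\wedge dr$, the Liouville identity $\Div(\rho Y_0)=0$ turns the exponential factor into $\rho(\gamma(t-t_0))/\rho(p_0)$, which combines with the relation between $Y_1\times Y_0$ and $\omega(Y_0,Y_1)/\rho$ to give $\Delta(t_0)=M(t_0)/\big(|Y_0(p_0)|\,\rho(p_0)\big)$, so simple zeros of $M$ are simple zeros of the displacement. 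You instead re-derive the splitting formula from scratch, monitoring the symplectic pairing $\Delta^{s,u}=\omega(Y_0,\delta^{s,u})$ rather than the Euclidean wedge; the same identity $\Div(FY_0)=0$ then annihilates the linear term in $\dot\Delta^{s,u}$, so no integrating factor ever appears and $M(t_0)$ drops out by direct integration of the two half-line contributions. Both arguments rest on the identical cancellation (area preservation of the unperturbed flow with respect to the weighted form); yours buys self-containedness---what you reproduce, in the $\omega$-adapted variable, is essentially the derivation of the trace-exponential formula that the paper cites---while the paper buys brevity by citation. Your closing suggestion of a Moser/Darboux symplectomorphism flattening $\omega$ near $\widetilde\Gamma\cup\{p_\pm\}$ is a third valid route, which the paper implicitly avoids by working with the weight $\rho$ directly. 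One small imprecision: the boundary condition $\delta^{s}(t)\to0$ as $t\to+\infty$ (and its analogue for $\delta^u$) is too strong, since the perturbed hyperbolic points---$2\pi$-periodic orbits of the non-autonomous system---are displaced at order $\varepsilon$, so the first variations tend to bounded but generally nonzero limits; this is harmless, because the vanishing of the boundary terms $\Delta^{s,u}(\pm\infty)$ uses only boundedness of $\delta^{s,u}$ together with the exponential decay of $Y_0(\gamma(t-t_0))$ at the fixed points, which is exactly what you invoke at that step.
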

\begin{proof}
If $\varepsilon$ is small enough, the perturbed system has two
hyperbolic fixed points $p_\pm^\varepsilon$. To analyze how the
heteroclinic connection is perturbed, we take a point $p_0\in
\widetilde \Gamma$ and we compute the so-called displacement (or
distance) function $\Delta(t_0)$ on a section~$\Sigma$ based at~$p_0$
and transverse to~$\widetilde \Gamma$. Recall that the function $\varepsilon \Delta(t_0)$ gives the distance of the splitting, up to order $O(\varepsilon^2)$, between the corresponding stable and unstable manifolds of the perturbed system at the section $\Sigma$.

A standard analysis, cf.~\cite[Equation (22)]{Ho} or the proof of~\cite[Theorem 4.5.3]{GH}, yields the following formula for $\Delta(t_0)$:
\begin{equation}\label{eq:displ}
\Delta(t_0)=\frac{1}{|Y_0(p_0)|}\,\int_{-\infty}^\infty Y_1(\gamma(t-t_0))\times Y_0(\gamma(t-t_0)) e^{-\int_0^{t-t_0}\text{Tr}\, DY_0(\gamma(s))\,ds}\,dt\,,
\end{equation}
where we have omitted the dependence of the integral curve on the
initial condition $p_0\in\widetilde\Gamma$. Here we are using the
notation $X\times Y:= X_1Y_2-X_2Y_1$ for vectors $X,Y\in\RR^2$ and $\text{Tr}\, DY_0$ is the trace of the Jacobian matrix of the unperturbed field $Y_0$.

Take coordinates in~$\cD$, which we will call $(z,r)$ just as in the
proof of Proposition~\ref{P:chaos},
and write the symplectic form as $\omega=\rho(z,r)\,dz\wedge dr$,
where $\rho(z,r)$ is a smooth function that does not vanish. Let us
call here $\{e_z,e_r\}$ the basis
of vector fields dual to $\{dz,dr\}$ (which are usually denoted by
$\pd_z$ and $\pd_r$, as they correspond to the partial derivatives with
respect to the coordinates~$z$ and~$r$). The Hamiltonian field $Y_0$ reads in these coordinates as
$$
Y_0=\frac{1}{\rho(z,r)}\Big(\partial_r H\, e_z-\partial_zH\, e_r\Big)\,.
$$
Noting that
\[
Y_1(\gamma(t-t_0))\times Y_0(\gamma(t-t_0))=\frac{\omega(Y_0,Y_1)|_{\gamma(t-t_0)}}{\rho(\gamma(t-t_0))}
\]
and
\begin{align}
e^{-\int_0^{t-t_0}\text{Tr}\, DY_0(\gamma(s))\,ds}&=e^{\int_{0}^{t-t_0}Y_0(\gamma(s))\cdot \nabla \log \rho(\gamma(s))\,ds}\\
&=e^{\int_0^{t-t_0}\frac{d\log \rho(\gamma(s))}{ds}\,ds}=\frac{\rho(\gamma(t-t_0))}{\rho(p_0)}\,,
\end{align}
Equation~\eqref{eq:displ} implies that
\[
\Delta(t_0)=\frac{M(t_0)}{|Y_0(p_0)|\rho(p_0)}\,,
\]
so the claim follows because $M(t_0)$ coincides with the displacement
function up to a constant proportionality factor.
\end{proof}

\section{Asymptotics for random Beltrami fields on
  $\RR^3$}
\label{S.R3}

We are now ready to prove our main results about random Beltrami
fields on~$\RR^3$, Theorems~\ref{T.R3} and~\ref{T.R3zeros}.
To do this, as we saw in the two previous sections, we need to
handle sets that have a rather geometrically complicated structure, which gives rise to
several measurability issues. For this reason, we start this section
by proving a version of the Nazarov--Sodin sandwich
estimate~\cite[Lemma~1]{NS16} that circumvents some of these issues
and which is suitable for our purposes.

\subsection{A sandwich estimate for sets of points and for
  arbitrary closed sets}

For any subset~$\Ga\subset\RR^3$, we denote
by~$N(x,r;\Ga)$ the number of connected components of~$\Ga$ that are
contained in the ball~$B_r(x)$.  Also, if $\cX:=\{ x_j:j\in\cJ\}$,
where $x_j\in\RR^3$, is a countable set of points (which is not
necessarily a closed subset of~$\RR^3$), then we define
\[
  \cN(x,r;\cX):=\#[\cX\cap B_r(x)]
\]
as the number of points of~$\cX$ contained in the open
ball~$B_r(x)$. For the ease of notation, we will write
$N(r;\Ga):=N(0,r;\Ga)$ and similarly~$\cN(r;\cX)$. We remark that these numbers may be infinite.

\begin{lemma}\label{L.sandwich}
Let~$\Ga$ be any subset of~$\RR^3$ whose connected components are all closed and let $\cX:=\{
x_j:j\in\cJ\}$, with $x_j\in\RR^3$, be a countable set of points
of~$\RR^3$. Then the
functions $\cN(\cdot,r; \cX)$ and $N(\cdot,r; \Ga)$ are measurable,
and for any $0<r<R$ one has
\begin{align*}
	\int_{B_{R-r}} \frac{\cN(y, r; \cX)}{ \vol{B_r}}\, {d}y &\leq \cN(R; \cX)
	\leq \int_{B_{R+r}} \frac{\cN(y, r; \cX)}{\vol{B_r}}\, {d}y\,,\\
	\int_{B_{R-r}} \frac{N(y, r; \Gamma)}{\vol {B_r}}\, dy &\leq N(R; \Gamma) \,.
\end{align*}
\end{lemma}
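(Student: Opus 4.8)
The plan is to establish the two assertions—measurability of the counting functions and the three inequalities—by elementary means: lower semicontinuity for the former, and Tonelli's theorem combined with elementary ball-inclusion estimates for the latter. The guiding principle is that both $\cN(\cdot,r;\cX)$ and $N(\cdot,r;\Ga)$ can be written as (possibly infinite) sums of indicator functions of open sets, which makes them measurable, and that integrating such a sum against Lebesgue measure reduces, after Tonelli, to a sum of volumes of the form $\vol{B_{R\pm r}\cap U}$ for open sets $U$ that one can sandwich between $0$ and $\vol{B_r}$.

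For the measurability I would first handle the point count. Since $x_j\in B_r(y)$ is equivalent to $y\in B_r(x_j)$, one has $\cN(y,r;\cX)=\sum_{j\in\cJ}\mathbf 1_{B_r(x_j)}(y)$, a countable sum of indicators of open balls, hence a measurable (indeed lower semicontinuous) $[0,\infty]$-valued function. For $N(\cdot,r;\Ga)$ the argument is the same in spirit but needs one observation: a connected component $C$ of $\Ga$ that is contained in some ball $B_r(y)$ is bounded and closed, hence compact, so the function $y\mapsto\sup_{x\in C}|x-y|$ is continuous and $U_C:=\{y:C\subset B_r(y)\}$ is open. Writing $N(y,r;\Ga)=\sup_S\sum_{C\in S}\mathbf 1_{U_C}(y)$, where the supremum runs over finite families $S$ of components, exhibits $N(\cdot,r;\Ga)$ as a supremum of lower semicontinuous functions and hence as a lower semicontinuous, measurable function. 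Equivalently, one checks directly that if $k$ compact components lie in $B_r(y_0)$ then the same $k$ components still lie in $B_r(y)$ for $y$ near $y_0$, which is precisely lower semicontinuity.

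For the inequalities I would pass through Tonelli. In the point case, $\int_{B_{R-r}}\cN(y,r;\cX)\,dy=\sum_j\vol{B_{R-r}\cap B_r(x_j)}$ and $\int_{B_{R+r}}\cN(y,r;\cX)\,dy=\sum_j\vol{B_{R+r}\cap B_r(x_j)}$. The lower bound follows because a nonempty intersection $B_{R-r}\cap B_r(x_j)$ forces $\vol{x_j}<R$ by the triangle inequality, while each such term is at most $\vol{B_r}$, so the left sum is at most $\vol{B_r}\,\cN(R;\cX)$. The upper bound follows because $\vol{x_j}<R$ forces $B_r(x_j)\subset B_{R+r}$, so every point of $\cX\cap B_R$ contributes a full $\vol{B_r}$ to the right sum. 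For the closed-set lower bound I would first dispose of the case $N(R;\Ga)=\infty$, which is trivial, and otherwise note that any component contained in $B_r(y)$ with $y\in B_{R-r}$ is contained in $B_R$, so only the finitely many components $C_1,\dots,C_m$ counted by $N(R;\Ga)=m$ are relevant; then $\int_{B_{R-r}}N(y,r;\Ga)\,dy=\sum_{i=1}^m\vol{B_{R-r}\cap U_{C_i}}\le m\,\vol{B_r}$, using $U_{C_i}\subset B_r(x_0)$ for any fixed $x_0\in C_i$.

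The main obstacle is the measurability of $N(\cdot,r;\Ga)$: unlike the point count, $\Ga$ may have uncountably many components, so one cannot simply invoke countable additivity of measurable functions. The resolution is the lower semicontinuity described above, which sidesteps any need to enumerate the components. A secondary point deserving care is that the closed-set argument yields only the lower bound and admits no analogue of the upper point-count estimate: a component $C$ of large diameter contained in $B_R$ produces a set $U_C$ whose measure may fall far below $\vol{B_r}$ (indeed $U_C$ shrinks as $\mathrm{diam}\,C$ grows), so the matching upper bound genuinely fails. This is exactly why the statement for $\Ga$ is asymmetric.
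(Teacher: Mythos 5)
Your proof is correct and follows essentially the same route as the paper: both establish measurability by writing the counts as suprema of finite sums of indicators of the open sets $B_r(x_j)$ and $U_C=\{y:C\subset B_r(y)\}$ (the paper's $\gamma^r$), and both derive the inequalities from the inclusions $x_j\in B_R\Rightarrow B_r(x_j)\subset B_{R+r}$, $B_r(x_j)\cap B_{R-r}\neq\emptyset\Rightarrow x_j\in B_R$, and $U_C\subset B_r(x_0)$ for $x_0\in C$. Your bookkeeping for the $\Gamma$-lower bound (restricting to $B_{R-r}$, where only the $N(R;\Gamma)$ components inside $B_R$ can contribute) is marginally cleaner than the paper's weighted sums $\sum_\gamma |\gamma^r|^{-1}\int\mathbbm{1}_{\gamma^r}$, but the substance is identical, including the closing observation about why the upper bound fails for $\Gamma$.
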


\begin{proof}
  Let us start by noticing that
  \[
\cN(y,r;\cX)=\#\lbrace
j\in\cJ: x_j\in{B(y,r)} \rbrace= \sum_{j\in\cJ}\mathbbm{1}_{{B_r(x_j)}}(y)\,.
  \]
  As the ball ${B_r(x)}$ is an open set, it is clear that
$\mathbbm{1}_{{B_r(x)}}(\cdot)$  is a lower semicontinuous
function. Recall that lower semicontinuity is preserved under sums, and
that the
supremum of an arbitrary set (not necessarily countable) of lower semicontinuous functions is also
lower semicontinuous. Therefore, from the formula
$$
	\cN(\cdot,r;\cX)=\sup_{\cJ'}
        \sum_{j\in\cJ'}\mathbbm{1}_{{B_r(x_j)}}(\cdot)\,,
        $$
where $\cJ'$ ranges over all finite subsets of~$\cJ$, we deduce that
the function~$\cN(\cdot,r;\cX)$ is lower semicontinuous, and
therefore measurable. 

Now let $\cJ_R:=\{j\in\cJ: x_j\in B_R\}$ and note that
	$$
	\vol{B_r} \cN(R;
        \cX)=\sum_{j\in\cJ_R}\int_{B_{R+r}}\mathbbm{1}_{{B_r(x_j)}}(y)\,
        { d}y\,.
	$$
As we can interchange the sum and the integral by the monotone
convergence theorem and
$$ \sum_{j\in\cJ_R}\mathbbm{1}_{{B_r(x_j)}}(y)\leq \sum_{j\in\cJ}\mathbbm{1}_{{B_r(x_j)}}(y)=\cN(y,r;\cX)\,,$$	
one immediately obtains the upper bound for $\cN(R;\cX)$. Likewise, using now that
\begin{align*}
\vol{B_r} \cN(R;
\cX)&=
  \sum_{j\in\cJ_R}\int_{B_{R+r}}\mathbbm{1}_{{B_r(x_j)}}(y)\,dy\\
  &\geq\sum_{j\in\cJ_R}\int_{B_{R-r}}\mathbbm{1}_{{B_r(x_j)}}(y)\,
  { d}y\\
  &=\sum_{j\in\cJ}\int_{B_{R-r}}\mathbbm{1}_{{B_r(x_j)}}(y)\,
{ d}y
  =\int_{B_{R-r}}\cN(y,r;\cX)\, { d}y\,,
\end{align*}
we derive the lower bound. The sandwich estimate for~$\cN(R;\cX)$ is then proved.

Now let $\ga$ be a connected component of~$\Ga$, which is a closed set
by hypothesis. Since $\gamma\subset
B_r(y)$ if and only if $y\in B_r(x)$ for all $x\in\gamma$, one has
that
\begin{equation}\label{Nyr}
	N(y,r;\Gamma)=\sum_{\gamma\subset \Gamma}\mathbbm{1}_{\gamma^r}(y)\,,
      \end{equation}
      where the sum is over the connected components of~$\Ga$ and the
      set $\ga^r$ is defined, for each connected component~$\ga$
      of~$\Ga$, as
\[
\gamma^r:=\bigcap_{x\in\gamma}B_r(x)\,,
\]
that is, as the set of points in $\mathbb R^3$ whose distance to any point of $\gamma$ is less than $r$. Obviously, the set $\ga^r$ is open, so $\mathbbm{1}_{\ga^r}$ is lower
semicontinuous, and contained in the ball~$B_r(x_0)$, where $x_0$~is
any point of~$\ga$. Also notice that $\ga^r$ is not the empty set provided that $2r$ is larger than the diameter of $\ga$. Therefore, by the same argument as before, if follows from the
expression~\eqref{Nyr} that the function $N(\cdot,r;\Ga)$ is
measurable. If we now define the set $\Ga_R$ consisting of the
connected components of~$\Ga$ that are contained in the ball~$B_R$,
the same argument as before shows that
\begin{align*}
N(R;\Ga)&\geq \sum_{\ga\subset \Ga_R}\frac{1}{|\gamma^r|}\int_{B_{R+r}}\mathbbm{1}_{\gamma^r}(y)\,
          { d}y\\
  &\geq \sum_{\ga\subset \Ga_R}\frac{1}{|\gamma^r|}\int_{B_{R-r}}\mathbbm{1}_{\gamma^r}(y)\,
    { d}y\\
  &=\sum_{\ga\subset \Ga}\frac{1}{|\gamma^r|}\int_{B_{R-r}}\mathbbm{1}_{\gamma^r}(y)\,
        { d}y\\
        &\geq\int_{B_{R-r}} \frac{N(y,r;\Ga)}{\sup_{\ga\subset \Ga}|\gamma^r|}\, dy\\
  &\geq \int_{B_{R-r}} \frac{N(y,r;\Ga)}{|B_r|}\, dy\,.
\end{align*}
In the first inequality we are summing over components $\ga$ whose diameter is smaller than $2r$, and to pass to the last inequality we have used the obvious volume bound $|\gamma^r|\leq |B_r|$. Note that the proof of the upper bound for $\cN(R; \cX)$ does not apply in this
case, essentially because we do not have lower bounds for $|\ga^r|$ in terms of $|B_r|$.
\end{proof}

\subsection{Proof of Theorem~\ref{T.R3} and Corollary~\ref{C.R3}}

We are ready to prove Theorem~\ref{T.R3}. In fact, we will establish a
stronger result which permits to control the parameters of the
periodic orbits and the invariant tori. In what follows, we shall use the notation introduced in Sections~\ref{S.preliminaries} and~\ref{S.chaos} for the number of periodic orbits $\No(R;[\ga],\cI)$, the number of Diophantine toroidal sets $\Nt(R;[\cT],\cJ, V_0)$ (and the volume of the set of invariant tori $\Vt(R;[\cT],\cJ)$) and the number of horseshoes $\Nh(R)$. This is useful in itself,
since we showed in Section~\ref{per} that the quantity $\No(R;[\ga],\cI)$ is finite but this does not need to be the case if
one just counts~$\No(R;[\ga])$. Also, the choice of counting the
volume of invariant tori instead of its number (which one definitely
expect to be infinite) provides the trivial bound
$\Vt(R;[\cT],\cJ)\leq |B_R|$. Specifically, the result we prove is the following:

\begin{theorem}\label{T.R3b}
 Consider a closed curve~$\ga$ and an embedded torus~$\cT$ of~$\RR^3$. Then for any
  $\cI=(T_1,T_2,\La_1,\La_2)$, some
  $\cJ=(\om_1,\om_2,\tau_1,\tau_2)$ and some $V_0>0$, where
  \[
0<T_1<T_2\,,\quad 0<\La_1<\La_2\,,\quad 0<\om_1<\om_2\,,\quad 0<\tau_1<\tau_2\,,
  \]
  a Gaussian random Beltrami
  field~$u$ satisfies
  \begin{align*}
\liminf_{R\to\infty}
\frac{\Nh(R)}{|B_R|}&\geq\nuh\,,\\[1mm]
\liminf_{R\to\infty}
\frac{\Nt(R;[\cT],\cJ,V_0)}{|B_R|}&\geq\nut([\cT],\cJ,V_0)\,,\\[1mm]
\liminf_{R\to\infty} \frac{\No(R;[\ga],\cI)}{|B_R|}&\geq\nuo([\ga],\cI)
\end{align*}
with probability~$1$, with constants that are all positive. In particular, the topological
entropy of~$u$ is positive almost surely, and
\[
\liminf_{R\to\infty} \frac{\Vt(R;[\cT],\cJ)}{|B_R|}\geq V_0\,\nut([\cT],\cJ,V_0)\,,
\]
with probability~$1$.
\end{theorem}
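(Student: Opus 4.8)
The plan is to prove all three volumetric lower bounds by a single template, combining the existence and stability results of Sections~\ref{S.preliminaries}--\ref{S.chaos} with the support property of~$\bPu$ (Proposition~\ref{P.support}) and the ergodicity of the translation action (Proposition~\ref{P.ergodic}), and only at the end converting the local information into a growth rate via the sandwich estimate (Lemma~\ref{L.sandwich}). Write $N^\bullet$ for any one of the three counting functionals $\Nh(\,\cdot\,)$, $\Nt(\,\cdot\,;[\cT],\cJ,V_0)$, $\No(\,\cdot\,;[\ga],\cI)$, and recall that each is lower semicontinuous, hence measurable, by Propositions~\ref{P.lsch}, \ref{P.lscit} and~\ref{P.lscpo}.

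First I would establish positivity of a fixed-ball count. Fix the radius $r:=R_0$ and the Beltrami field $u_0=U_{\vp p}$, with threshold $\delta>0$, furnished by the relevant existence corollary (Corollary~\ref{C.horseBelt}, \ref{C.torBelt} or~\ref{C.perBelt}), so that $N^\bullet_w(r)\geq 1$ whenever $\|w-u_0\|_{C^k(B_r)}<\delta$. By Proposition~\ref{P.support} the open event $A:=\{w:\|w-u_0\|_{C^k(B_r)}<\delta\}$ satisfies $\bPu(A)>0$, so the bounded (hence $L^1$) random variable $\Phi(w):=\mathbbm 1\{N^\bullet_w(r)\geq 1\}$ has $\bE\Phi\geq\bPu(A)>0$. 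Next, since $N^\bullet_{\tau_y u}(r)$ counts the objects of~$u$ lying in $B_r(y)$, Wiener's ergodic theorem in the form of Proposition~\ref{P.ergodic} yields, almost surely,
\[
\Bint_{B_R}\Phi(\tau_y u)\,dy\;\longrightarrow\;\bE\Phi\,,
\]
so that the open set $G:=\{y:N^\bullet_{\tau_y u}(r)\geq 1\}$ has a well-defined positive density $\bE\Phi$ in~$\RR^3$.

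The final step is to pass from this local density to a global lower bound. Applying the closed-set lower bound of Lemma~\ref{L.sandwich} to the union~$\Ga$ of the counted objects, and using that $N(y,r;\Ga)\geq\Phi(\tau_y u)$ while $N^\bullet_u(R)\geq N(R;\Ga)$, one gets
\[
N^\bullet_u(R)\;\geq\;\int_{B_{R-r}}\frac{N(y,r;\Ga)}{|B_r|}\,dy\;\geq\;\frac1{|B_r|}\int_{B_{R-r}}\Phi(\tau_y u)\,dy\,.
\]
Dividing by $|B_R|$ and noting that the annular region $B_R\setminus B_{R-r}$ is negligible, the previous step gives $\liminf_{R\to\infty}N^\bullet_u(R)/|B_R|\geq\bE\Phi/|B_r|>0$ almost surely, which is exactly the three displayed inequalities with $\nuh,\nut([\cT],\cJ,V_0),\nuo([\ga],\cI)$ all positive. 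The two remaining assertions follow at once: positivity of entropy holds because $\liminf\Nh(R)/|B_R|>0$ forces the almost-sure existence of a horseshoe, whence $\htop(u)>0$ by Proposition~\ref{P.lsch}; and the volume estimate follows because each of the $\Nt(R;[\cT],\cJ,V_0)$ pairwise disjoint solid tori carries a set of invariant tori of inner measure $>V_0$, giving $\Vt(R;[\cT],\cJ)\geq V_0\,\Nt(R;[\cT],\cJ,V_0)$.

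I expect the genuine difficulty to lie not in the probabilistic core but in Step three, namely in legitimately feeding these geometrically intricate objects into the sandwich estimate as well-separated closed connected components so that $N(y,r;\Ga)\geq\Phi(\tau_y u)$ really holds. For periodic orbits with parameters in~$\cI$ this is immediate from the uniform separation~$\eta_0$ recorded in Section~\ref{per}, and for the pairwise disjoint solid tori it is built into the definition; the delicate case is that of horseshoes, where one must guarantee that a horseshoe sitting in $B_r(y)$ yields a bona fide connected component contained in a slightly enlarged ball, and this is exactly where the lower semicontinuity coming from KAM theory and from the stability of transverse heteroclinic intersections of Section~\ref{S.chaos} is indispensable. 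Should the connectivity bookkeeping prove awkward, the same conclusion can be reached by a packing argument that bypasses Lemma~\ref{L.sandwich} entirely: extract a maximal $2r$-separated subset $\{y_i\}\subset G\cap B_R$, so that $\#\{y_i\}\gtrsim|G\cap B_R|/|B_{2r}|$, and observe that the objects contained in the pairwise disjoint balls $B_r(y_i)$ are themselves automatically pairwise disjoint, which furnishes the desired volumetric lower bound.
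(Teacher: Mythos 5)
Your proposal is correct and follows essentially the same route as the paper's own proof: the existence--stability corollaries (Corollaries~\ref{C.perBelt}, \ref{C.torBelt}, \ref{C.horseBelt}) combined with Proposition~\ref{P.support} give a positive expectation for a fixed-ball count, Proposition~\ref{P.ergodic} turns it into an almost-sure spatial density, and the closed-component lower bound of Lemma~\ref{L.sandwich} produces the volumetric estimates, with the entropy and torus-volume claims handled by the same concluding observations. The only differences are cosmetic: you truncate with an indicator instead of the paper's $\min\{\Phi_r,m\}$, and you apply the ergodic theorem before rather than after the sandwich inequality.
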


\begin{proof}
For the ease of notation, let us denote by~$\Phi_R(u)$ the
quantities~$\Nh(R)$,  $\No(R;[\ga],\cI)$ and $\Nt(R;[\cT],\cJ,V_0)$, in
each case. Horseshoes are closed, and so are the set of periodic
orbits isotopic to~$\ga$ with parameters in~$\cI$ and the set of
closed invariant solid tori of the kind counted
by~$\Nt(R;[\cT],\cJ,V_0)$. Therefore, the lower bound for sets $\Gamma$ whose components are closed proved in
Lemma~\ref{L.sandwich}
ensures that, for any $0<r<R$,
\begin{align*}
\frac{\Phi_R(u)}{|B_R|}\geq \frac1{|B_R|}
  \int_{B_{R-r}}\frac{\Phi_r(\tau_yu)}{|B_r|}\, dy \geq \frac1{|B_R|}
  \int_{B_{R-r}}\frac{\Phi_r^m(\tau_yu)}{|B_r|}\, dy\,,
\end{align*}
where for any large~$m>1$ we have defined the truncation
\[
\Phi_r^m(w):= \min\{\Phi_r(w),m\}\,.
\]
We recall that the translation operator is defined as $\tau_yu(\cdot)=u(\cdot+y)$.

As the truncated random variable~$\Phi_r^m$ is in
$L^1(C^k(\RR^3,\RR^3),\mu_u)$ for any~$m$, one can consider the limit
$R\to\infty$ and apply
Proposition~\ref{P.ergodic} to conclude that
\begin{align*}
\liminf_{R\to\infty}\frac{\Phi_R(u)}{|B_R|}\geq
 \liminf_{R\to\infty} \frac{|B_{R-r}|}{|B_{R}|}\Bint_{B_{R-r}}\frac{\Phi_r^m(\tau_yu)}{|B_r|}\,
  dy = \frac1{|B_r|}\bE \Phi_r^m
\end{align*}
$\mu_u$-almost surely, for any~$r$ and $m$. Corollaries~\ref{C.perBelt}, \ref{C.torBelt}
and~\ref{C.horseBelt} imply that (for any~$\cI$ in the case of periodic
orbits, for some~$\cJ$ and some $V_0>0$ in the case of invariant tori,
and unconditionally in the case of horseshoes), there exists some
$r>0$, some $\de>0$ and a Beltrami field~$u_0$ such that
\[
\Phi_r(w)\geq 1
\]
for any divergence-free vector field~$w\in C^k(\RR^3,\RR^3)$ with
$\|w-u_0\|_{C^4(B_r)}<\de$. As
the random variable~$\Phi_r$ is nonnegative, and the measure~$\mu_u$ is
supported on Beltrami fields (cf. Proposition~\ref{P.support}), which are divergence-free, it is then immediate
that, when picking the parameters~$\cI$, $\cJ$ and~$V_0$ as above, one has for $k\geq 4$
\[
\bE \Phi_r^m\geq \mu_u\big(\{ w\in C^k(\RR^3,\RR^3):
\|w-u_0\|_{C^k(B_r)}<\de\}\big)=:\mathcal M(u_0,\de)\,.
\]
This is positive again by Proposition~\ref{P.support}. So defining the
constant, in each case, as
\begin{align*}
\nu:=\frac{\mathcal M(u_0,\de)}{|B_r|}>0
\end{align*}
the first part of the theorem follows.

Finally, the topological entropy of~$u$ is positive almost surely because~$u$ has a
horseshoe with probability~1, see Proposition~\ref{P.lsch}. The estimate for the growth of the volume of Diophantine invariant tori follows from the trivial lower bound
$$\Vt(R;[\cT],\cJ)>V_0\,\Nt(R;[\cT],\cJ,V_0)\,.$$
\end{proof}
\begin{remark}
A simple variation of the proof of Theorem~\ref{T.R3b} provides an
analogous result for links. We recall that a link $\mathcal L$ is a
finite set of pairwise disjoint closed curves in~$\RR^3$, which can be
knotted and linked among them. More precisely, if
$N^{\mathrm{l}}(R;[\mathcal L],\cI)$ is the number of unions of hyperbolic
periodic orbits of~$u$ that are  contained in $B_R$, isotopic to the
link $\mathcal L$, and whose periods and maximal Lyapunov exponents are in the intervals prescribed by $\cI$, then
\[
\liminf_{R\to\infty} \frac{N^{\mathrm{l}}(R;[\mathcal L],\cI)}{|B_R|}\geq\nu^{\mathrm{l}}([\mathcal L],\cI)>0\,.
\]
To apply the lower bound obtained in Lemma~\ref{L.sandwich} to
estimate the number of links, it is enough to transform each link into
a connected set by joining its different components by closed
arcs. The proof then goes exactly as in Theorem~\ref{T.R3b} upon
noticing that analogs of Proposition~\ref{P.perBelt} and Corollary~\ref{C.perBelt} also hold for
links (the proof easily carries over to this case).
\end{remark}

\begin{proof}[Proof of Corollary~\ref{C.R3}]
The corollary is now an immediate
consequence of the fact that the number of isotopy classes of closed
curves and embedded tori is countable. Indeed, by Theorem~\ref{T.R3},
with probability~1, a Gaussian random Beltrami field has infinitely
many horseshoes, an infinite volume of ergodic invariant tori isotopic
to a given embedded torus~$\cT$, and infinitely many periodic orbits
isotopic to a given closed curve~$\ga$. Since the countable
intersection of sets of probability~1 also has probability~$1$, the
claim follows.
\end{proof}

\subsection{Proof of Theorem~\ref{T.R3zeros}}\label{S.zeros}

We are now ready to prove the asymptotics for the number of
zeros of the Gaussian random Beltrami field~$u$. Let us start by
noticing that, almost surely, the zeros of~$u$ are nondegenerate. This
is because
\[
\mu_u\big(\big\{ w\in C^k(\RR^3,\RR^3): \det\nabla w(x)=0 \;\text{and}\;
w(x)=0 \text{ for some
  $x\in\RR^3$}\big\}\big)=0\,,
\]
which is a consequence of the boundedness of the probability density function (cf. Remark~\ref{R.rho}) and that $u$ is $C^\infty$ almost surely, see~\cite[Proposition 6.5]{AW09}. Hence the intersection of the
zero set
\[
  \cX_w:=\{x\in\RR^3: w(x)=0\}
\]
with a ball~$B_R$ is a finite set of points almost surely. The implicit function theorem then implies that these zeros are robust under $C^1$-small perturbations, so that with probability $1$, $\cN(R;\cX_v)\geq \cN(R;\cX_w)$ for any vector field $v$ that is close enough to $w$ in the $C^1$ norm. Summarizing, we have the following:

\begin{proposition}\label{P.lscz}
Almost surely, the functional $w\mapsto \cN(R;\cX_w)$ is
lower semicontinuous in the $C^k$~compact open topology for vector
fields, for any $k\geq 1$. Furthermore, $\cN(R;\cX_w)<\infty$ with probability $1$.
\end{proposition}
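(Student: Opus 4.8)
The plan is to deduce both claims from a single almost-sure structural fact: with probability~$1$, \emph{every} zero of~$u$ is nondegenerate, in the sense that $\nabla u(x)$ is invertible whenever $u(x)=0$. Once this is known, finiteness follows from isolation and compactness, while lower semicontinuity follows from the persistence of nondegenerate zeros under $C^1$-small perturbations via the implicit function theorem. Thus the whole argument reduces to the nondegeneracy step, which I regard as the crux.

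To establish nondegeneracy, I would invoke the regularity and distributional properties already available for~$u$. By Proposition~\ref{P.conv} the field~$u$ is of class~$C^\infty$ almost surely, and by Remark~\ref{R.rho} the law of $u(x)$ has the bounded Gaussian density $\rho(y)=(2\pi)^{-3/2}e^{-|y|^2/2}$ for every fixed~$x$. These are precisely the hypotheses under which a Kac--Rice-type exclusion argument applies: the event that there exists some $x\in\RR^3$ with $u(x)=0$ and $\det\nabla u(x)=0$ has probability zero, as follows from~\cite[Proposition 6.5]{AW09}. Heuristically, requiring both $u(x)=0$ (three scalar conditions) and $\det\nabla u(x)=0$ (one further condition) cuts out a positive-codimension configuration that the nondegenerate Gaussian law cannot charge. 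On the full-measure event where this holds, the inverse function theorem makes each zero isolated, so $\cX_u$ has no accumulation points; since $\overline{B_R}$ is compact, the intersection $\cX_u\cap B_R$ is finite. This yields the second assertion $\cN(R;\cX_u)<\infty$.

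For lower semicontinuity, I would work on the same full-measure event and fix such a field~$w$, whose zeros in the open ball~$B_R$ are finitely many nondegenerate points $x_1,\dots,x_n$ with $\cN(R;\cX_w)=n$. At each~$x_j$ the Jacobian $\nabla w(x_j)$ is invertible, so the implicit function theorem yields disjoint neighborhoods $U_j\subset B_R$ and a threshold $\de>0$ such that any vector field~$v$ with $\|v-w\|_{C^1(\overline{B_R})}<\de$ has a zero inside each~$U_j$; these zeros are distinct and lie in the open ball, so $\cN(R;\cX_v)\geq n=\cN(R;\cX_w)$. Note that no difficulty arises from zeros of~$w$ sitting on the sphere~$\partial B_R$, since $\cN$ counts only points in the open ball and interior zeros persist in the interior. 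Because the $C^k$ compact-open topology for any $k\geq1$ is finer than the $C^1$ topology on~$\overline{B_R}$, the inequality $\liminf_{v\to w}\cN(R;\cX_v)\geq\cN(R;\cX_w)$ holds in that topology as well, which is exactly lower semicontinuity of $w\mapsto\cN(R;\cX_w)$ at~$w$. As this holds for all~$w$ in a full-measure set, the functional is almost surely lower semicontinuous, completing the proof. The only genuinely nonroutine ingredient is the almost-sure nondegeneracy of all zeros, as it simultaneously powers isolation (hence finiteness) and persistence (hence semicontinuity).
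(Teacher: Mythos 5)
Your proposal is correct and follows essentially the same route as the paper: almost-sure nondegeneracy of all zeros via the bounded density of $u(x)$ (Remark~\ref{R.rho}), the $C^\infty$ regularity from Proposition~\ref{P.conv}, and \cite[Proposition 6.5]{AW09}, then isolation plus compactness for finiteness and the implicit function theorem for $C^1$-persistence, hence lower semicontinuity. Your write-up is in fact slightly more detailed than the paper's (e.g.\ the explicit handling of boundary zeros and the topology comparison), but the underlying argument is identical.
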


Since the variance $\bE[u(x)\otimes u(x)]$ is the identity matrix by
Corollary~\ref{C.diag}, the Kac--Rice formula~\cite[Proposition
6.2]{AW09} then enables us to compute the expected value
of the random variable
\begin{equation}\label{defPhir}
  \Phi_r(w):=\frac{\cN(r;\cX_w)}{|B_r|}
\end{equation}
as
\begin{align}\label{ENz}
  \bE\Phi_r&=\Bint_{B_r}\bE\{|\det\nabla w(x)|: w(x)=0\}\, \rho(0)\, dx\notag\\
  &=(2\pi)^{-\frac32} \bE\{|\det\nabla w(x)|: w(x)=0\}\,.
\end{align}
Here we have used that the above conditional expectation is
independent of the point~$x\in\RR^3$ by the translational invariance of the probability measure. We recall that the probability density function~$\rho(y):=
(2\pi)^{-\frac32}\, e^{-\frac12|y|^2}$ was
introduced in Remark~\ref{R.rho}.

To compute the above conditional expectation value, one can argue as follows:

\begin{lemma}\label{L.bE}
  For any~$x\in\RR^3$,
  \[
\bE\{ |\det \nabla u(x)| : u(x)=0\}= (2\pi)^{\frac32}\nuz\,,
\]
where the constant~$\nuz$ is given by~\eqref{nuz}.
  \end{lemma}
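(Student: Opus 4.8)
The plan is to compute the conditional expectation explicitly by determining the joint Gaussian law of the pair $(u(x),\nabla u(x))$ and then exploiting the pointwise rigidity of the Beltrami equation. By the translational invariance of~$\bPu$ (Proposition~\ref{P.ergodic}) we may assume $x=0$. Since~$u$ is a centered Gaussian field which is~$C^\infty$ almost surely, the vector $u(0)\in\RR^3$ together with the matrix $\nabla u(0)$ forms a jointly centered Gaussian system, and Corollary~\ref{C.diag} gives $\bE[u(0)\otimes u(0)]=I$. The remaining second moments are spherical integrals against the spectral measure: differentiating $\vka_{ab}(x)=\int_\S e^{ix\cdot\xi}p_a(\xi)\overline{p_b(\xi)}\,d\si(\xi)$ from Proposition~\ref{P.kernel} and setting $x=0$ yields
\[
\bE\big[u_a(0)\,\partial_l u_b(0)\big]=\int_\S \xi_l\,\Imag\big(p_a\overline{p_b}\big)\,d\si,\qquad
\bE\big[\partial_l u_a(0)\,\partial_m u_b(0)\big]=\int_\S \xi_l\xi_m\, p_a\,\overline{p_b}\,d\si.
\]
As~$p$ is a polynomial, each such integral reduces through the closed-form sphere moments~\eqref{Folland} to an elementary evaluation in terms of Gamma functions; this is the source of the rational coefficients in~\eqref{tildeQ}.

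The decisive point is that the identities defining a Beltrami field hold pointwise and not merely in distribution. Indeed $\Div u=0$ forces $\Tr\nabla u(0)=0$, while $\curl u=u$ identifies the antisymmetric part of $\nabla u(0)$ with the Hodge dual of $u(0)$, so that this antisymmetric part is a fixed linear function of $u(0)$. Hence, \emph{conditioned on the event $u(0)=0$}, the matrix $\nabla u(0)$ is almost surely symmetric and traceless, and $\det\nabla u(0)$ becomes the determinant of a matrix in the five-dimensional space $\mathrm{Sym}_0$ of symmetric traceless $3\times3$ matrices. Writing~$S$ for the symmetric part of $\nabla u(0)$, the conditional law of~$S$ given $u(0)=0$ is therefore a centered Gaussian on~$\mathrm{Sym}_0$, and since $\bE[u(0)\otimes u(0)]=I$ its covariance is given by the Schur-complement formula
\[
\bE[S\otimes S]-\bE[S\otimes u(0)]\,\bE[u(0)\otimes S],
\]
both terms of which are assembled from the moments above.

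With this conditional covariance at hand, I would introduce linear coordinates $z=(z_1,\dots,z_5)$ on $\mathrm{Sym}_0$ adapted to it. Expanding the $3\times3$ determinant of the corresponding symmetric traceless matrix in these coordinates produces exactly the cubic $Q(z)$ of~\eqref{Q(z)}, whereas the conditional Gaussian density takes the form $c\,e^{-\widetilde Q(z)}$ with~$\widetilde Q$ the quadratic form~\eqref{tildeQ} read off from the inverse covariance and~$c$ the attendant Gaussian normalization. The conditional expectation is then
\[
\bE\{|\det\nabla u(0)|:u(0)=0\}=\int_{\RR^5}|Q(z)|\,c\,e^{-\widetilde Q(z)}\,dz,
\]
and the normalization $\int_{\RR^5}c\,e^{-\widetilde Q}\,dz=1$, together with the explicit value of this Gaussian integral, identifies $c=(2\pi)^{3/2}\cz$ with $\cz$ as in~\eqref{nuz}, which gives the claimed identity.

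The main obstacle is the bookkeeping. One must evaluate all the second-moment integrals for the explicit cubic vector~$p$ of~\eqref{p(xi)}, assemble the relevant $3\times3$ blocks, carry out the Schur-complement reduction onto $\mathrm{Sym}_0$, and finally choose the coordinates~$z$ so that both the determinant collapses to the stated~$Q$ and the quadratic form collapses to~$\widetilde Q$ with the precise coefficients $\tfrac{189}{65}$, $\tfrac{42}{11}$, $\tfrac{42}{13}$. A point that must not be overlooked is that the cross-covariance $\bE[S\otimes u(0)]$ does \emph{not} vanish: the integrands $\xi_l\,\Imag(p_a\overline{p_b})$ are even functions of~$\xi$ and survive integration over~$\S$, so the Schur-complement correction genuinely contributes and cannot be dropped. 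The remaining five-dimensional integral $\int_{\RR^5}|Q|\,e^{-\widetilde Q}$ need not be computed in closed form; identifying the integrand suffices, the rest being the numerical value recorded in~\eqref{nuz}.
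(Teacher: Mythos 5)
Your proposal is correct and follows essentially the same route as the paper: the paper implements the conditioning via the Gaussian regression $\zeta:=\nabla u(x)-Bu(x)$ with $B:=\bE(\nabla u(x)\otimes u(x))$, whose law is exactly the conditional law you describe through the Schur complement, and it likewise reduces everything to the five-dimensional integral $\int_{\RR^5}|Q|e^{-\widetilde Q}$ over traceless symmetric matrices, identifying $\tfrac12\zeta'\cdot\Si'^{-1}\zeta'=\widetilde Q$ and computing $\det\Si'$ to fix the constant. The only organizational difference is that you deduce the symmetry and tracelessness of the conditioned gradient a priori from the pointwise constraints $\Div u=0$ and $\curl u=u$, whereas the paper reads them off a posteriori from the rank-$5$ kernel of the explicitly computed $9\times 9$ covariance matrix and records the equivalence with the PDE constraints in a remark afterwards.
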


  \begin{proof}
Let us first reduce the computation of the conditional expectation to
that of an ordinary expectation by introducing a new random
variable~$\zeta$. Just like~$\nabla u(x)$, this new variable takes
values in the space of $3\times 3$~matrices, which we will identify
with~$\RR^9$ by labeling the matrix entries as
\begin{equation}\label{zeta}
\zeta=: \left(
\begin{array}{ccc}
\zeta _1 & \zeta _2 & \zeta _3 \\
\zeta _4 & \zeta _5 & \zeta _6 \\
\zeta _7 & \zeta _8 & \zeta _9 \\
\end{array}
\right)\,.
\end{equation}
This variable is defined as
\begin{equation}\label{defzeta}
\zeta:= \nabla u(x)-B u(x)\,,
\end{equation}
where the linear operator~$B$ (which is a $9\times 3$~matrix if we
identify $\nabla u(x)$ with a vector in~$\RR^9$) is chosen so that the
covariance matrix of~$u(x)$ and~$\zeta$ is~0:
\[
B:=\bE (\nabla u(x)\otimes u(x))\big[\bE ( u(x)\otimes u(x))\big]^{-1}=\bE (\nabla u(x)\otimes u(x))\,.
\]
Here we have used that the second matrix is in fact the identity by Corollary~\ref{C.diag}.
An easy computation shows that then
\[
\bE(\zeta\otimes u(x))=0\,;
\]
as $u(x)$ and $\zeta$ are Gaussian vectors with zero mean, this condition
ensures that they are independent random variables. Therefore, we can use the
identity~\eqref{defzeta} to write the conditional expectation as
\[
\bE\{ |\det \nabla u(x)| : u(x)=0\}= \bE\{ |\det [\zeta+ Bu(x)]| : u(x)=0\}=\bE|\det\zeta|\,.
    \]

Our next goal is to compute the covariance matrix of~$\zeta$ in closed
form, which will enable us to find the expectation of
$|\det\zeta|$. By definition,
\begin{align*}
\bE(\zeta\otimes \zeta)&= \bE[(\nabla u(x)-B u(x))\otimes (\nabla
                          u(x)-B u(x))]\\
  &=\bE[\nabla u(x)\otimes \nabla u(x)]- \bE[\nabla u(x)\otimes u(x)]\,
  \bE[ u(x) \otimes \nabla u(x)]\,.
\end{align*}
The basic observation now is that, for any Hermitian polynomials in three variables
$q(\xi)$ and $q'(\xi)$, the argument that we used to establish the
formula~\eqref{formulakappa} and Corollary~\ref{C.diag} shows that
\begin{align*}
\bE[(q(D) u_j(x))\, (q'(D)u_k(x))]&= \bE[q(D_x) u_j(x)\,
                                      \overline{q'(D_y)u_k(y)}]|_{y=x}\\
  &= \int_\S q(\xi)\, q'(-\xi)\, p_j(\xi)\, \overline{p_k(\xi)}\,
    e^{i\xi\cdot (x-y)}\, d\si(\xi)\bigg|_{y=x}\\
  &= \int_\S q(\xi)\, q'(-\xi)\, p_j(\xi)\, \overline{p_k(\xi)}\,
    d\si(\xi)\,.
\end{align*}
Here we have used that $q'(D)u_k$ is real-valued because $q'$ is Hermitian. As all the matrix integrals in the calculation of $\bE(\zeta\otimes \zeta)$ are of this form with $q(\xi)=i\xi$ or $1$, the computation again boils down to evaluating integrals of the form
$\int_\S \xi^\al\, d\si(\xi)$, which can be computed using the formula~\eqref{Folland}.

Tedious but straightforward computations then yield the following
explicit formula for the covariance matrix of~$\zeta$:
\[
  \Si:= \bE(\zeta\otimes\zeta)=
\left(
\begin{array}{ccccccccc}
\frac{5}{21} & 0 & 0 & 0 & -\frac{5}{42} & 0 & 0 & 0 & -\frac{5}{42} \\
0 & \frac{11}{84} & 0 & \frac{11}{84} & 0 & 0 & 0 & 0 & 0 \\
0 & 0 & \frac{11}{84} & 0 & 0 & 0 & \frac{11}{84} & 0 & 0 \\
0 & \frac{11}{84} & 0 & \frac{11}{84} & 0 & 0 & 0 & 0 & 0 \\
-\frac{5}{42} & 0 & 0 & 0 & \frac{3}{14} & 0 & 0 & 0 & -\frac{2}{21} \\
0 & 0 & 0 & 0 & 0 & \frac{13}{84} & 0 & \frac{13}{84} & 0 \\
0 & 0 & \frac{11}{84} & 0 & 0 & 0 & \frac{11}{84} & 0 & 0 \\
0 & 0 & 0 & 0 & 0 & \frac{13}{84} & 0 & \frac{13}{84} & 0 \\
-\frac{5}{42} & 0 & 0 & 0 & -\frac{2}{21} & 0 & 0 & 0 & \frac{3}{14} \\
\end{array}
\right)
\]
Note that this matrix is not invertible: it has rank~5, and an orthogonal basis for the ($4$-dimensional) kernel is
$$
\{ e_1+e_5+e_9,\; e_2-e_4,\; e_3-e_7, \; e_6-e_8\}\,,
$$
where $\{e_j\}_{j=1}^9$ denotes the canonical basis of~$\RR^9$. As we
are dealing with Gaussian vectors, this is equivalent to the assertion
that
\begin{equation}\label{zetadep}
\zeta_1+\zeta_5+\zeta_9=0\,,\quad \zeta_2=\zeta_4\,,\quad
\zeta_3=\zeta_7\,,\quad \zeta_6=\zeta_8\
\end{equation}
almost surely (which amounts to saying that~$\zeta$ is a traceless
symmetric matrix). Notice that these equations define a $5$-dimensional subspace orthogonal to the kernel of $\Si$. The remaining random variables $\zeta':=
(\zeta_1,\zeta_2,\zeta_3,\zeta_5,\zeta_6)$ are independent Gaussians
with zero mean and covariance matrix
\[
\Sigma':=\bE(\zeta'\otimes\zeta')=\left(
\begin{array}{ccccc}
\frac{5}{21} & 0 & 0 & -\frac{5}{42} & 0 \\
0 & \frac{11}{84} & 0 & 0 & 0 \\
0 & 0 & \frac{11}{84} & 0 & 0 \\
-\frac{5}{42} & 0 & 0 & \frac{3}{14} & 0 \\
0 & 0 & 0 & 0 & \frac{13}{84} \\
\end{array}
\right)
\]
By construction, $\Si'$ is an invertible matrix, so we can immediately
write down a formula for the expectation value of~$|\det\zeta|$:
\begin{align*}
\bE|\det\zeta|&=(2\pi)^{-\frac52} (\det\Si')^{-\frac12}\int_{\RR^5} \left| \det\left(
\begin{array}{ccc}
\zeta _1 & \zeta _2 & \zeta _3 \\
\zeta _2 & \zeta _5 & \zeta _6 \\
\zeta _3 & \zeta _6 & -\zeta_1-\zeta_4 \\
\end{array}
  \right)\right|\, e^{-\frac12\zeta'\cdot \Si'^{-1}\zeta'}\, d\zeta'\\
  &=(2\pi)^{-\frac52} (\det\Si')^{-\frac12}\int_{\RR^5} | Q(\zeta')|\, e^{-\frac12\zeta'\cdot \Si'^{-1}\zeta'}\, d\zeta'\,,
\end{align*}
with the cubic polynomial~$Q$ being defined as in~\eqref{Q(z)}.
Since $\frac12 \zeta'\cdot\Si'^{-1}\zeta'= \widetilde Q(\zeta')$, where the
quadratic polynomial~$\widetilde Q$ was defined in~\eqref{tildeQ}, and
\[
  \det\Si'=\frac{5\cdot 143^2}{{2^8}\cdot{21^5}}\,,
\]
we therefore have
\[
\bE|\det\zeta|=(2\pi)^{\frac32}\nuz\,.
\]
The result then follows.
\end{proof}

\begin{remark}
If one keeps track of the
connection between~$\zeta$ and~$\nabla u(x)$, it is not hard to see
that the first condition $\zeta_1+\zeta_5+\zeta_9=0$ in~\eqref{zetadep} is equivalent to
$\Div u(x)=0$, while the remaining three just mean that $\curl u(x)= u(x)$, at the points $x\in\RR^3$ where $u(x)=0$.
\end{remark}

In particular, this shows that $\Phi_R\in
L^1(C^k(\RR^3,\RR^3),\mu_u)$. For the ease of notation, let us define the ergodic mean operator
\[
\cA_R\Phi(w):= \frac1{|B_R|} \int_{B_R} \Phi(\tau_y w)\,{d}y\,.
\]
Since~$\cN(R,\cX_w)$ is finite almost surely, cf. Proposition~\ref{P.lscz}, the sandwich estimate proved in
Lemma~\ref{L.sandwich} implies that, almost surely,
\[
\frac1{|B_R|}\int_{B_{R-r}}\Phi_r(\tau_yw)\, dy\leq \Phi_R(w)\leq \frac1{|B_R|}\int_{B_{R+r}}\Phi_r(\tau_yw)\, dy
\]
for any~$0<r<R$. Therefore, and using that $|B_{R\pm r}|/|B_R|=(1\pm
r/R)^3$, one has
\[
|\Phi_R-\cA_R\Phi_r|\leq \bigg|\bigg(1+\frac rR\bigg)^3
\cA_{R+r}\Phi_r-\cA_R\Phi_r\bigg| + \bigg|\bigg(1-\frac rR\bigg)^3
\cA_{R-r}\Phi_r-\cA_R\Phi_r\bigg| \,.
\]
For fixed~$r$, Equation~\eqref{ENz} and Proposition~\ref{P.ergodic} ensure that
\begin{equation}\label{tendsnuz}
  \cA_R\Phi_r\Lone\bE\Phi_r=\nuz
\end{equation}
as $R\to\infty$; also, note that the
limit (which is independent of~$r$) has been computed in
Lemma~\ref{L.bE} above.

Therefore, if we let~$R\to\infty$ while~$r$ is held fixed, the RHS of
the estimate before Equation~\eqref{tendsnuz} tends to~0 $\mu_u$-almost surely and
in~$L^1(\mu_u)$, so that
\[
\Phi_R-\cA_R\Phi_r\Lone0
\]
as $R\to\infty$. As $\cA_R\Phi_r\Lone\nuz$ by~\eqref{tendsnuz},
Theorem~\ref{T.R3zeros} is proven.

\section{The Gaussian ensemble of Beltrami fields on the torus}
\label{S.BFtorus}

\subsection{Gaussian random Beltrami fields on the torus}

As introduced in Section~\ref{S.introT}, a Beltrami field on the flat 3-torus $\TT^3:=(\RR/2\pi\mathbb Z)^3$
(or, equivalently, on the cube of~$\RR^3$ of side length~$2\pi$ with
periodic boundary conditions) is a vector field on~$\TT^3$ satisfying
the equation
\[
\curl v= \la v
\]
for some real number~$\la\neq0$. To put it differently, Beltrami fields
on the torus are the eigenfields of the curl operator. It is easy to
see that such an eigenfield is divergence-free and has zero mean, that
is, $\int_{\TT^3} v\, dx=0$.

Since $\De v+\la^2 v=0$, it is well-known (see e.g.~\cite{Adv}) that the spectrum of
the curl operator on the $3$-torus consists of the numbers of the form
$\la=\pm|k|$ for some vector with integer coefficients $k\in\mathbb
Z^3$. For concreteness, we will henceforth assume that $\la>0$; the case of
negative frequencies is completely analogous. Since $k$ has integer
coefficients, one can label the positive eigenvalues of curl by a
positive integer~$L$ such that $\la_L=L^{1/2}$. Let us define
\[
\cZ_L:= \{k\in\mathbb Z^3: |k|^2=L\}
\]
and note that the set~$\cZ_L$ is invariant under reflections (i.e.,
$-k\in\cZ_L$ if $k\in\cZ_L$).

The Beltrami fields corresponding to the eigenvalue~$\la_L$ must be of
the form
\begin{equation}\label{V1}
v=\sum_{k\in\cZ_L} V_k\, e^{ik\cdot x}\,,
\end{equation}
for some $V_k\in\CC^3$. Conversely, this expression defines a
Beltrami field with frequency~$\la_L$ if and only if $V_k=
\overline{V_{-k}}$ (which ensures that~$v$ is real valued) and
\[
\frac{ik}{L^{1/2}}\times V_k = V_k\,.
\]
Since $|k|=L^{1/2}$, we infer from the proof of Proposition~\ref{P.p(xi)}
that the vector~$V_k$ must be of the form
\begin{equation}\label{V2}
V_k= \al_k\, p(k/L^{1/2})
\end{equation}
unless $k=(\pm L^{1/2},0,0)$. Here $\al_k\in\CC$ is an arbitrary complex
number and the Hermitian vector field~$p(\xi)$ was defined in~\eqref{p(xi)}.

The multiplicity of the eigenvalue~$\la_L$ is given by the cardinality
$d_L:=\#\cZ_L$. By Legendre's three-square theorem, $\cZ_L$ is
nonempty (and therefore~$\la_L$ is an eigenvalue of the curl operator)
if and only if~$L$ is not of the form $4^a(8b+7)$ for nonnegative
integers~$a$ and~$b$.

Based on the formulas~\eqref{V1}-\eqref{V2}, we are now ready to
define a Gaussian random Beltrami field on the torus with
frequency~$\la_L$ as
\begin{equation}\label{uL}
u^L(x):=\bigg(\frac{2\pi}{d_L}\bigg)^{1/2}\sum_{k\in\cZ_L} a^L_k\, p(k/L^{1/2})\, e^{ik\cdot x}\,,
\end{equation}
where the real and imaginary parts of the complex-valued random
variable~$a^L_k$ are standard Gaussian variables. We also assume that
these random variables are independent except
for the constraint $a^L_k=\overline{a^L_{-k}}$. The inessential
normalization factor $(2\pi/d_L)^{1/2}$ has been introduced for later
convenience.

Note that $u^L(x)$ is a smooth $\RR^3$-valued function of
the variable~$x$, so it induces a Gaussian probability measure~$\mu^L$
on the space of $C^k$-smooth vector fields on the torus,
$C^k(\TT^3,\RR^3)$. As before, we will always assume that $k\geq4$ to
apply results from KAM~theory. We will also employ the rescaled
Gaussian random field
\[
u^{L,z}(x):= u^L\bigg(z+\frac {x}{L^{1/2}}\bigg)
\]
for any fixed point $z\in \TT^3$.

\subsection{Estimates for the rescaled covariance matrix}

In what follows, we will restrict our attention to the positive
integers~$L$, which we will henceforth call {\em admissible}\/, that
are not congruent with 0, 4 or 7 modulo~8. When~$L$ is congruent
with~0 or~7 modulo~8, Legendre's three-square theorem immediately
implies that~$\cZ_L$ is empty. The reason to rule out numbers
congruent with~4 modulo~8 is more subtle: a deep theorem of
Duke~\cite{Duke}, which addresses a question raised by Linnik, ensures that the set~$\cZ_L/L^{1/2}$ becomes uniformly
distributed on the unit sphere as $L\to\infty$ through
integers that are congruent to 1, 2, 3, 5 or~6 modulo~8. This ensures
that
\begin{equation}\label{unif}
\frac{4\pi}{d_L}\sum_{k\in\cZ_L} \phi(k/L^{1/2})\to \int_\S \phi(\xi)\, d\si(\xi)
\end{equation}
as $L\to\infty$ through admissible values, for any continuous function~$\phi$ on $\S$. A particular case is when
$L$ goes to infinity through squares of odd values, that is, when $L=(2m+1)^2$ and
$m\to\infty$.

The covariance kernel of the Gaussian random variable~$u^L$ is the matrix-valued function
\[
\ka^L(x,y):= \bE^L[u^L(x)\otimes u^L(y)]\,.
\]
Following Nazarov and Sodin~\cite{NS16}, we will be most interested in
the covariance kernel of the rescaled field~$u^{L,z}$ at a
point~$z\in\TT^3$, which is given by
\[
\ka^{L,z}(x,y)= \bE^L\bigg[u^L\bigg(z+\frac {x}{L^{1/2}}\bigg)\otimes u^L\bigg(z+\frac {y}{L^{1/2}}\bigg)\bigg]\,.
\]
The following proposition ensures that, for large admissible frequencies~$L$, the rescaled
covariance kernel, and suitable generalizations thereof, tend to those of a Gaussian random Beltrami
field on~$\RR^3$, $\ka(x,y)$, defined in~\eqref{kappa}:

\begin{proposition}\label{P.covL}
  For any $z\in\TT^3$, the rescaled covariance kernel
  $\ka^{L,z}(x,y)$ has the following properties:
  \begin{enumerate}
  \item It is invariant under translations and independent
    of~$z$. That is, there exists some function $\varkappa^L$ such
    that
    \[
\ka^{L,z}(x,y)=\varkappa^L(x-y)\,.
    \]

    \item Given any compact set
      $K\subset\RR^3$, the covariance kernel satisfies
  \[
\ka^{L,z}(x,y)\to \ka(x,y)
  \]
  in $C^s(K\times K)$ as $L\to\infty$ through admissible values.
  \end{enumerate}
  \end{proposition}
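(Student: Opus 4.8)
The plan is to compute the rescaled covariance kernel directly from the definition~\eqref{uL} and to recognize the resulting expression as a spherical Riemann sum governed by the equidistribution statement~\eqref{unif}. First I would insert the series~\eqref{uL} into $\ka^{L,z}(x,y)=\bE^L[u^{L,z}(x)\otimes u^{L,z}(y)]$ and expand. The only probabilistic input needed is the second-moment structure of the coefficients: since the real and imaginary parts of $a^L_k$ are independent standard Gaussians constrained only by $a^L_k=\overline{a^L_{-k}}$, a direct check gives $\bE^L[a^L_k\,\overline{a^L_{k'}}]=2\,\delta_{kk'}$, the potentially anomalous term $\bE^L[a^L_k\,\overline{a^L_{-k}}]=\bE^L[(a^L_k)^2]$ vanishing. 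Writing $\xi_k:=k/L^{1/2}\in\S$, the double sum collapses to a single diagonal sum and the phases $e^{\pm ik\cdot z}$ cancel, leaving
\[
  \ka^{L,z}_{jl}(x,y)=\frac{4\pi}{d_L}\sum_{k\in\cZ_L}p_j(\xi_k)\,\overline{p_l(\xi_k)}\,e^{i\xi_k\cdot(x-y)}=:\vka^L_{jl}(x-y)\,.
\]
This proves part~(i) immediately, since the right-hand side depends on $(x,y)$ only through $x-y$ and carries no dependence on $z$.

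For part~(ii) I would compare this with the limiting kernel of Proposition~\ref{P.kernel}, namely $\vka_{jl}(w)=\int_\S p_j(\xi)\,\overline{p_l(\xi)}\,e^{i\xi\cdot w}\,d\si(\xi)$. For fixed $w$ the integrand $\phi_w(\xi):=p_j(\xi)\,\overline{p_l(\xi)}\,e^{i\xi\cdot w}$ is continuous on $\S$, so~\eqref{unif} yields $\vka^L_{jl}(w)\to\vka_{jl}(w)$ pointwise as $L\to\infty$ through admissible values. Because $\ka^{L,z}(x,y)=\vka^L(x-y)$, any derivative $\partial^\alpha_x\partial^\beta_y$ only multiplies the integrand by a polynomial factor $(i\xi_k)^\alpha(-i\xi_k)^\beta$, which is again continuous on $\S$; applying~\eqref{unif} to each such test function gives pointwise convergence of every derivative of order at most $s$.

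The one genuine difficulty is promoting this pointwise-in-$w$ convergence to convergence that is uniform on $\overline{K-K}$, which is exactly what $C^s(K\times K)$ convergence of the kernels requires; here~\eqref{unif} as stated controls only one fixed test function at a time. I would resolve this with a standard equicontinuity argument. View~\eqref{unif} as the weak-$*$ convergence of the empirical measures $\mu_L:=\tfrac{4\pi}{d_L}\sum_{k\in\cZ_L}\delta_{\xi_k}$ to $d\si$ on $\S$. For each fixed multi-index pair $(\alpha,\beta)$ the map $w\mapsto\phi^{\alpha,\beta}_w$, with $\phi^{\alpha,\beta}_w(\xi):=(i\xi)^\alpha(-i\xi)^\beta p_j(\xi)\,\overline{p_l(\xi)}\,e^{i\xi\cdot w}$, is continuous from $\RR^3$ into $C(\S)$, so its image over the compact set $\overline{K-K}$ is a compact family in $C(\S)$; against such a family weak-$*$ convergence is automatically uniform. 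Concretely, were uniformity to fail one could extract $w_L\to w_*\in\overline{K-K}$ along which the error stays bounded below, and then bound $\bigl|\int\phi^{\alpha,\beta}_{w_L}\,d(\mu_L-\si)\bigr|$ by inserting $\phi^{\alpha,\beta}_{w_*}$, using both the uniform smallness of $\|\phi^{\alpha,\beta}_{w_L}-\phi^{\alpha,\beta}_{w_*}\|_{C(\S)}$ and the uniformly bounded total masses $\mu_L(\S)=4\pi=\si(\S)$, reaching a contradiction. Since there are only finitely many $(\alpha,\beta)$ with $|\alpha|+|\beta|\le s$ and finitely many entries $(j,l)$, taking the maximum over them yields the claimed $C^s(K\times K)$ convergence.
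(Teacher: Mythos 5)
Your proposal is correct and follows essentially the same route as the paper: the same second-moment computation $\bE^L[a^L_k\overline{a^L_{k'}}]=2\delta_{kk'}$ collapses the double sum into the translation-invariant, $z$-independent kernel $\frac{4\pi}{d_L}\sum_{k\in\cZ_L}p(\xi_k)\otimes\overline{p(\xi_k)}\,e^{i\xi_k\cdot(x-y)}$, and Duke's equidistribution statement~\eqref{unif}, applied to the test functions $\xi\mapsto \xi^\alpha(-\xi)^\beta\, p_j(\xi)\,\overline{p_l(\xi)}\,e^{i\xi\cdot(x-y)}$, gives the convergence of all derivatives. The only difference is that you explicitly carry out the upgrade from pointwise to uniform convergence on compacts (via compactness of the family of test functions in $C(\S)$ and the uniformly bounded masses), a step the paper's proof leaves implicit with ``the result follows''.
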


\begin{proof}
  Let~$\al$, $\be$ be any multiindices, and recall the operator $D=-i\nabla$ introduced in Section~\ref{S.random}. By definition, and using the fact that $u^L$ is real,
  \begin{align*}
&D_x^\al D_y^\be \ka^{L,z}(x,y)= \bE^L\bigg[D_x^\al u^L\bigg(z+\frac {x}{L^{1/2}}\bigg)\otimes
              D_y^\be u^L\bigg(z+\frac {y}{L^{1/2}}\bigg)\bigg]\\
            &= \bE^L\bigg[D_x^\al u^L\bigg(z+\frac {x}{L^{1/2}}\bigg)\otimes
             D_y^\be\overline{ u^L\bigg(z+\frac {y}{L^{1/2}}\bigg)}\bigg]\\
    &=\frac{2\pi}{d_L}\sum_{k\in\cZ_L}\sum_{k'\in\cZ_L} \bE^L
      (a^L_k\overline{a^L_{k'}})\, p\bigg(\frac {k}{L^{1/2}}\bigg)\otimes\overline{p\bigg(\frac{k'}{L^{1/2}}\bigg)}\,
      \bigg(\frac {k}{L^{1/2}}\bigg)^\al
      \bigg(\frac{-k'}{L^{1/2}}\bigg)^\be\,  e^{ik\cdot (z+\frac {x}{L^{1/2}})-ik'\cdot
      (z+\frac {y}{L^{1/2}})}\,.
  \end{align*}
  The independence properties of the Gaussian variables~$a^L_k$ (which have zero mean)
  imply that $\bE^L
      (a^L_k\overline{a^L_{k'}})=0$ if $k'\not\in \{k,-k\}$. When
      $k'=k$ one has
      \[
        \bE^L [|a^L_k|^2]= \bE^L[ (\Real a^L_k)^2]+ \bE^L[ (\Imag a^L_k)^2]=2\,,
      \]
      and when $k'=-k$,
\[
        \bE^L [(a^L_k)^2]= \bE^L[ (\Real a^L_k)^2]- \bE^L[ (\Imag
        a^L_k)^2]+2i \,\bE^L[ (\Real a^L_k)(\Imag a^L_k)]=0\,.
      \]
      Therefore, $\bE^L
      (a^L_k\overline{a^L_{k'}})=2\de_{k k'}$ and we obtain
      \begin{align*}
       D_x^\al D_y^\be \ka^{L,z}(x,y)=\frac{4\pi}{d_L}\sum_{k\in\cZ_L}p\bigg(\frac {k}{L^{1/2}}\bigg)\otimes\overline{p\bigg(\frac {k}{L^{1/2}}\bigg)}\,
      \bigg(\frac {k}{L^{1/2}}\bigg)^\al \bigg(-\frac {k}{L^{1/2}}\bigg)^\be\, e^{ik\cdot(x-y)/L^{1/2}}\,.
      \end{align*}
In particular, this formula shows that
$\ka^{L,z}(x,y)$ is independent of~$z$ and translation-invariant.

      Using now the fact that $\cZ_L$ becomes uniformly distributed on~$\S$
      as~$L\to\infty$ through admissible values, we obtain via
      Equation~\eqref{unif} that
      \begin{align*}
D_x^\al D_y^\be \ka^{L,z}(x,y)&\to\int_\S \xi^\al (-\xi)^\be \,
                              p(\xi)\otimes\overline{p(\xi)}\, e^{i\xi\cdot (x-y)}\, d\si(\xi)\\
  &= D_x^\al D_y^\be  \int_\S
                              p(\xi)\otimes\overline{p(\xi)}\, e^{i\xi\cdot (x-y)}\, d\si(\xi)      \,.
      \end{align*}
      By Proposition~\ref{P.kernel}, the RHS equals
      $D_x^\al D_y^\be\ka(x,y)$, so the result follows.
    \end{proof}

   \subsection{A convergence result for probability measures}

We shall next present a result showing that the
probability measure defined by the rescaled field~$u^{L,z}$ converges,
as $L\to\infty$, to that defined by the Gaussian random Beltrami field
on~$\RR^3$, $u$, on compact sets of~$\RR^3$:

\begin{lemma}\label{L.convmu}
Fix some $R>0$ and denote by~$\mu^{L,z}_R$
and~$\mu_{u,R}$, respectively, the probability measures on $C^k(B_R,\RR^3)$ defined by
the Gaussian random fields~$u^{L,z}$ and~$u$. Then the
measures~$\mu^{L,z}_R$ converge weakly to~$\mu_{u,R}$ as $L\to\infty$
through the admissible integers.
\end{lemma}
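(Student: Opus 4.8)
The plan is to prove weak convergence on the separable Banach space $C^k(B_R,\RR^3)$ by combining, in the standard way, convergence of the finite-dimensional distributions with tightness of the family $\{\mu^{L,z}_R\}$. Since $C^k(B_R,\RR^3)$ is a Polish space whose Borel $\sigma$-algebra is generated by the point evaluations $w\mapsto D^\al w(x)$ with $|\al|\leq k$ and $x\in B_R$, the finite-dimensional distributions determine the measure; hence, by Prokhorov's theorem, tightness yields relative compactness, and identifying every subsequential limit through its finite-dimensional distributions gives the conclusion.

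First I would treat the finite-dimensional distributions. Because $u^{L,z}$ and $u$ are centered Gaussian fields, for any finite collection of points $x_1,\dots,x_n\in B_R$ and multiindices $\al_1,\dots,\al_n$ with $|\al_i|\leq k$, the joint law of $\big(D^{\al_i} u^{L,z}(x_i)\big)_i$ is a centered Gaussian vector whose covariance entries are $D_x^{\al_i}D_y^{\al_j}\ka^{L,z}(x_i,x_j)$. By Proposition~\ref{P.covL}, these covariances converge as $L\to\infty$ through admissible integers to $D_x^{\al_i}D_y^{\al_j}\ka(x_i,x_j)$, which are precisely the corresponding covariances of $u$ computed in Proposition~\ref{P.kernel}. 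Convergence of the covariance matrices of centered Gaussian vectors implies convergence in distribution, so the finite-dimensional distributions of $\mu^{L,z}_R$ converge to those of $\mu_{u,R}$.

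The main work, and the hard part, will be tightness. Here I would exploit that Proposition~\ref{P.covL} gives convergence $\ka^{L,z}\to\ka$ in $C^s(\overline{B_R}\times\overline{B_R})$ for every $s$; fixing $s$ large (say $s=2k+4$) yields a uniform-in-$L$ bound on $\|\vka^L\|_{C^s}$. For a centered Gaussian field this translates, via a Taylor expansion of the covariance, into the second-moment estimate $\bE\big|D^\al u^{L,z}(x)-D^\al u^{L,z}(x')\big|^2\leq C|x-x'|^2$ for all $|\al|\leq k$, uniformly in $L$. Gaussian hypercontractivity then upgrades this to $\bE\big|D^\al u^{L,z}(x)-D^\al u^{L,z}(x')\big|^p\leq C_p|x-x'|^p$ for every $p$, and the Kolmogorov--Chentsov continuity theorem, with $p$ chosen large, produces a uniform bound on $\bE\|u^{L,z}\|_{C^{k,\gamma}(B_R)}$ for some $\gamma\in(0,1)$. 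Since the embedding $C^{k,\gamma}(B_R)\hookrightarrow C^k(B_R)$ is compact, Markov's inequality shows that for each $\ep>0$ there is a closed ball in $C^{k,\gamma}(B_R)$, compact in $C^k(B_R)$, carrying $\mu^{L,z}_R$-mass at least $1-\ep$ uniformly for large $L$; this is precisely tightness.

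I expect the only genuinely delicate point to be the uniform Hölder control underlying tightness: it is exactly the $C^s$-convergence of the covariance kernels in Proposition~\ref{P.covL} for $s$ strictly larger than $k$ that makes the moment bounds uniform in $L$, and hence makes the argument close. Once tightness and convergence of finite-dimensional distributions are both in hand, the identification of every weak limit point with $\mu_{u,R}$ is automatic, and the lemma follows.
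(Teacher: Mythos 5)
Your proposal is correct and shares the paper's overall architecture---convergence of finite-dimensional distributions via Proposition~\ref{P.covL} plus tightness, followed by the standard identification of the limit---but your tightness mechanism is genuinely different from the paper's. The paper computes the expected squared Sobolev norm exactly from the covariance kernel, $\bE^{L,z}\|w\|_{H^s(B_R)}^2=\sum_{|\al|\leq s}\int_{B_R}\Tr\big(D_x^\al D_y^\al\ka^{L,z}(x,y)\big|_{y=x}\big)\,dx$, which is uniformly bounded in~$L$ by Proposition~\ref{P.covL}; Sobolev embedding then bounds $\bE^{L,z}\|w\|_{C^{k+1}(B_R)}^2$ uniformly, and Markov's inequality together with Arzel\`a--Ascoli (bounded sets in $C^{k+1}$ are precompact in $C^k$) yields tightness. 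You instead extract uniform second-moment increment bounds from the uniform $C^s$ control of the stationary covariance (using that the covariance of each derivative field has a critical point at the origin, so the increment is quadratic in $|x-x'|$), upgrade to $p$-th moments by Gaussian moment equivalence, and run Kolmogorov--Chentsov to bound a $C^{k,\gamma}$ norm, concluding via the compact embedding $C^{k,\gamma}\hookrightarrow C^k$. Both routes rest on the same input and the same compactness principle, but the paper's $L^2$-based computation is shorter precisely because for Gaussian fields expected Sobolev norms are read off directly from the diagonal of the covariance kernel, with no need for Kolmogorov-type continuity estimates or high moments. A final cosmetic difference: for the implication ``tightness $+$ finite-dimensional convergence $\Rightarrow$ weak convergence'' in a space of smooth functions, the paper cites Wilson's extension of Billingsley's theorem to jet spaces, whereas you argue via Prokhorov's theorem and identification of subsequential limits through point evaluations; these are equivalent, though you should note that the point evaluations $w\mapsto D^\al w(x)$ are continuous on $C^k$ and generate its Borel $\sigma$-algebra (over the closed ball), which is what makes the identification step legitimate.
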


\begin{proof}
Let us start by noting that all the finite dimensional distributions of
the fields $u^{L,z}$ converge to those of~$u$ as $L\to\infty$. Specifically, consider any finite number of points $x^1,\dots ,
  x^n\in\RR^3$, any indices $j^1,\dots, j^n\in\{1,2,3\}$, and any
  multiindices with $|\al^j|\leq k$. Then it is not hard to see that the Gaussian vectors of zero
expectation
\[
(\pd^{\al^1}u_{j^1}^{L,z}(x^1), \dots, \pd^{\al^n}u_{j^n}^{L,z}(x^n))\in\RR^{n}
\]
converge in distribution to the Gaussian vector
\begin{equation}\label{Gvect2}
(\pd^{\al^1}u_{j^1}(x^1),\dots, \pd^{\al^n}u_{j^n}(x^n))
\end{equation}
as $L\to\infty$. This follows from the fact that their probability
density functions are completely determined by the $n\times n$
variance matrix
\[
 \Si^L:=
 \Big(\pd_x^{\al^l}\pd_y^{\al^m}\ka_{j^lj^m}^{L,z}(x,y)\big|_{(x,y)=(x^l,x^m)}\Big)_{1\leq
   l,m\leq n}\,,
\]
which converges to
$\Si:=(\pd_x^{\al^l}\pd_y^{\al^m}\ka_{j^lj^m}(x,y)|_{(x,y)=(x^l,x^m)})$ as
$L\to\infty$ by Proposition~\ref{P.covL}. The latter, of course, is
the covariance matrix of the Gaussian vector~\eqref{Gvect2}.

It is well known that this convergence of arbitrary Gaussian vectors
is not enough to conclude that $\mu^{L,z}_R$ converges weakly
to~$\mu_{u,R}$. However, notice that, for
any integer~$s\geq0$, the mean of the $H^s$-norm of~$u^{L,z}$ is uniformly bounded:
\begin{align*}
\bE^{L,z}\|w\|_{H^s(B_R)}^2&=\sum_{|\al|\leq s}\bE\int_{B_R}
                             |D^\al u^{L,z}(x)|^2\, dx\\
  &=\sum_{|\al|\leq s}\int_{B_R}  \Tr\Big(D_x^\al D_y^\al
    \ka^{L,z}(x,y)\big|_{y=x}\Big)\, dx\\
  &\xrightarrow[L\to\infty]{\phantom{L}}
 \sum_{|\al|\leq s}\int_{B_R} \Tr \Big(D_x^\al
    D_y^\al\ka(x,y)\big|_{y=x}\Big)\, dx<M_{s,R}\,.
\end{align*}
To pass to the last line, we have used Proposition~\ref{P.covL} once
more. As the constant~$M_{s,R}$ is independent of~$L$, Sobolev's
inequality ensures that
\[
\sup_L\bE^{L,z} \|w\|_{C^{k+1}(B_R)}^2\leq C\sup_L\bE^{L,z} \|w\|^2_{H^{k+3}(B_R)}<M
\]
for some constant~$M$ that only depends on~$R$. For any~$\ep>0$, this implies that for all admissible $L$ large enough
\[
\mu^{L,z}_R\big(\big\{ w\in C^k(B_R,\RR^3): \|w\|_{C^{k+1}(B_R)}^2> M/\ep\big\}\big)<\ep\,.
\]
As the set $\{ w\in C^k(B_R,\RR^3): \|w\|_{C^{k+1}(B_R)}^2\leq
M/\ep\}$ is compact by the Arzel\`a--Ascoli theorem, we conclude
that the sequence
of probability measures $\mu^{L,z}_R$ is tight.
Therefore, a straightforward extension to jet spaces of the classical
results about the
convergence of probability measures on the space of continuous
functions~\cite[Theorem 7.1]{Bill}, carried out in~\cite{Wilson},
permits to conclude that $\mu^{L,z}_R$ indeed converges weakly to~$\mu_{u,R}$ as $L\to\infty$. The lemma is then proven.
\end{proof}

    \subsection{Proof of Theorem~\ref{T.torus}}

   We are now ready to prove our asymptotic estimates for high-frequency
   Beltrami fields on the torus. The basic idea is that, by the
   definition of the rescaling,
   \[
\mu^L\big(\big\{ w\in C^k(\TT^3,\RR^3):
N^{\mathrm{h}}_w >m\big\}\big)\geq \mu^{L,z}_R\big(\big\{ w\in C^k(B_R,\RR^3):
N^{\mathrm{h}}_w(r)>m\big\}\big)
   \]
  provided that $r<R<L^{1/2}$: this just means that the number of horseshoes
  that~$u^L$ has in the whole torus is certainly not
  less than those that are contained in a ball centered at any given
  point $z\in\TT^3$ of radius $r/L^{1/2}<1$. The same is clearly true as
  well when one counts invariant solid tori, periodic orbits or zeros instead.

For the ease of notation, let us denote by~$\Phi_r(w)$ the
quantity~$N^{\mathrm{h}}_w(r)$,  $N^{\mathrm{t}}_w(r;[\cT],\cJ,V_0)$, $N^{\mathrm{o}}_w(r;[\ga],\cI)$ or $N^{\mathrm{z}}_w(r)$ (that is, the number of
nondegenerate zeros of~$w$ in~$B_r$), in
each case. See Sections~\ref{S.preliminaries} and~\ref{S.chaos} for precise definitions. We recall that $N^{\mathrm{z}}_w(r)=\cN(r;\cX_w)$ with probability~$1$, cf. Section~\ref{S.zeros}. Theorems~\ref{T.R3b} (for periodic orbits, invariant tori and horseshoes) and~\ref{T.R3zeros} (for zeros) ensure that, given any $m_1>0$, any~$\de_1>0$, any
closed curve~$\ga$ and any embedded torus~$\cT$, one can find some
parameters~$\cI$, $\cJ$, $V_0$ and $r>0$ such that
\[
\mu_u\big(\big \{w\in C^k(\RR^3,\RR^3): \Phi_r(w)>m_1 \big\}\big)>1-\de_1\,.
\]
Of course, here we are simply using that the volume
$|B_r|$, which appears in the statements of Theorems~\ref{T.R3b} and~\ref{T.R3zeros} but not
here, can be made arbitrarily large by taking a large~$r$.

Let us now fix some~$R>r$ and some point $z\in\TT^3$. We showed in Propositions~\ref{P.lscpo},~\ref{P.lscit},~\ref{P.lsch} and~\ref{P.lscz} that the functionals that we are now denoting
by~$\Phi_r$ are lower semicontinuous on the space~$C^k(\RR^3,\RR^3)$ of divergence-free fields for
$k\geq4$. This implies that the set
\[
\Omega_{r,R,m_1}:=\{w\in C^k(B_R,\RR^3): \Phi_r(w)>m_1\}
\]
is open in~$C^k(B_R,\RR^3)$. Lemma~\ref{L.convmu} ensures that the
measure $\mu^{L,z}_R$ converges weakly to~$\mu_{u,R}$ as $L\to\infty$
through the admissible integers. As the set~$\Omega_{r,R,m_1}$ is open,
this is well known to imply (see e.g.~\cite[Theorem 2.1.iv]{Bill}) that
\begin{align*}
\liminf_{L\to\infty}\mu^{L,z}_R(\Omega_{r,R,m_1})&\geq
                                                 \mu_{u,R}(\Omega_{r,R,m_1})\\
                                               &=\mu_u\big( \big\{w\in
                                                 C^k(\RR^3,\RR^3):
                                                 \Phi_r(w)>m_1 \big\}\big)\\
  &>1-\de_1\,.
\end{align*}
We observe that $\de_1>0$ can be taken
arbitrarily small if $r$ is large enough (and $r/L^{1/2}<R/L^{1/2}<1$). Now, for any $A\geq 1$ and $L$ large enough, we can take $A$ pairwise disjoint balls in $\TT^3$ of radius $r/L^{1/2}<A^{-1/3}$ centered at points $\{z^a\}_{a=1}^A\subset \TT^3$. Setting $m:=Am_1$, the previous analysis, which is independent of the point $z$, readily implies that
\[
\mu^L\big(\big\{ w\in C^k(\TT^3,\RR^3):
N^{\mathrm{X},\mathrm{e}}_w >m\big\}\big)\geq 1-2A\de_1>1-\de\,,
   \]
where the superscript~X stands for $\mathrm{h,t,o}$ or $\mathrm{z}$, thus proving the part of the statement concerning the number of approximately equidistributed horseshoes, invariant tori isotopic to~$\cT$, periodic
orbits isotopic to~$\ga$ or zeros. In fact, concerning invariant tori, we observe that obviously $N^{\mathrm{t}}_w(r;[\cT])=\infty$ if
$N^{\mathrm{t}}_w(r;[\cT],\cJ,V_0)\geq1$. Since the previous estimate ensures that $N^{\mathrm{t}}_w(r;[\cT],\cJ,V_0)>m_1$ with probability $1$ as $L\to\infty$, we infer that the probability of having an infinite number of (Diophantine) invariant tori isotopic
to~$\cT$ also tends to~1 as $L\to\infty$ through the admissible integers. However this does not provide any information about the expected volume of
the invariant tori.

The result about the topological entropy follows from the following
observation. If we denote by $\phi^L_t$ the time-$t$ flow of the Beltrami field $u^L(z+\cdot)$, and by $\phi_t$ the flow of the rescaled field $u^{L,z}$, it is evident that
\[
\phi^L_t=\frac{1}{L^{1/2}}\phi_{L^{1/2}t}\,.
\]
Then, the topological entropy $\htop(u^L)$, which is defined as the entropy of its time-$1$ flow, satisfies
\begin{align}\label{eq.entropy}
\htop(u^L)&=\htop(\phi^L_1)=\htop\Big(\frac{1}{L^{1/2}}\phi_{L^{1/2}}\Big)=\htop(\phi_{L^{1/2}})=L^{1/2}\htop(\phi_1)\\
&=L^{1/2}\htop(u^{L,z})\,.
\end{align}
In the third equality we have used that the topological entropy does
not depend on the space scale (or equivalently, on the metric), and in
the fourth equality we have used Abramov's well-known formula (see
e.g.~\cite{GM10}). Since the rescaled field has a horseshoe in a ball
of radius~$r$ with probability $1$ as $L\to\infty$, and a horseshoe
has positive topological entropy, say larger than some constant~$\nuh_*$ (see Proposition~\ref{P.lsch}),
Equation~\eqref{eq.entropy} implies that the topological
entropy of~$u^L$ is at least $\nuh_* L^{1/2}$.

Finally, we prove the statement about the expected values. As above, we use the functional
$\Phi_r(w)$ to denote the number of different objects (horseshoes, solid tori or periodic orbits). The case of zeros will be considered later. Note that,
since $\Phi_r$ is lower semicontinuous, and $\mu^{L,z}$ converges
weakly to~$\mu_u$ as $L\to\infty$ by Lemma~\ref{L.convmu}, it is standard that~\cite[Exercise 2.6]{Bill}
\[
\liminf_{L\to\infty}\bE^{L,z}\frac{\Phi_r}{|B_r|}\geq \bE
\frac{\Phi_r}{|B_r|}\geq \eta>0\,,
\]
where we have picked some fixed, large enough $r$. Here we have used
the asymptotics in~$\RR^3$, given by Theorem~\ref{T.R3b}, to infer that the last
expectation is positive if~$r$ is large. Notice that the constant $\eta$ depends on $[\gamma],[\cT],\cI$ or $\cJ$ depending on the functional the we are considering, but we shall not write this dependence explicitly. Furthermore, as the
distribution of the measure~$\mu^{L,z}_R$ is in fact independent of~$z$
by Proposition~\ref{P.covL}, this ensures that there is some~$L_0$
independent of~$z$ such that
\[
\bE^{L,z}\frac{\Phi_r}{|B_r|}> \frac\eta2
\]
for all admissible~$L>L_0$ and all $z\in\TT^3$.

Now, given any admissible $L>L_0$, it is standard that we can cover the torus~$\TT^3$ by
balls $\{ B_{r_L}(z^a): 1\leq a\leq A_L\}$ of radius $r_L:=2r/L^{1/2}$ centered at $z^a\in \TT^3$ such
that the smaller balls $B_{r_L/2}(z^a)$ are pairwise disjoint. This
implies that $A_L\geq c L^{\frac32}$ for some dimensional constant~$c$. The
expected value of, say, the number of horseshoes of $u^L$ in $\TT^3$ can then be controlled as follows, for any admissible~$L>L_0$:
\begin{align*}
\frac{\bE^L N^{\mathrm h}}{L^{3/2}} &\geq \sum_{a=1}^{A_L} \frac{|B_r|}{L^{3/2}} \,
                              \bE^{L,z^a}
                              \frac{\Phi_r
                              }{|B_r|}\\
  &\geq \frac{c|B_r|\eta}2>\nu_*
\end{align*}
for some positive constant $\nu_*$ independent of~$L$. An analogous estimate holds for the expected value $\bE^L N^{\mathrm o}([\ga])$.

To estimate the volume of ergodic invariant tori isotopic to~$\cT$ we can proceed as follows. For any admissible~$L>L_0$ we have:
\begin{align*}
\bE^L V^{\mathrm{t}}([\cT]) &\geq \sum_{a=1}^{A_L} |B_{r_L/2}| \,
                              \bE^{L,z^a}
                              \frac{V^{\mathrm{t}}(r;[\cT],\cJ)
                              }{|B_r|}\\
  &\geq \sum_{a=1}^{A_L} |B_{r_L/2}| \, V_0\, \bE^{L,z^a} \frac{\Phi_r
    }{|B_r|}\\
  &\geq \frac{V_0\eta}2\sum_{a=1}^{A_L} |B_{r_L/2}| >\nu^{\mathrm{t}}_*([\cT])
\end{align*}
for some positive constant $\nu^{\mathrm{t}}_*([\cT])$ independent of~$L$. Here we have
used that the balls $B_{r_L/2} (z^a)$ are pairwise disjoint and the sum of their volumes
is, by construction, larger than~$|\TT^3|/8$.

Lastly, in the following lemma we consider the case of zeros:

\begin{lemma}
$\displaystyle \bE^L(L^{-\frac32}\NzL)\to(2\pi)^3\nuz$ as $L\to\infty$ through admissible values.
\end{lemma}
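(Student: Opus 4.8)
The plan is to evaluate $\bE^L\NzL$ by the Kac--Rice formula on $\TT^3$ and then rescale, so that Proposition~\ref{P.covL} and Lemma~\ref{L.bE} govern the limit. First I would note that, exactly as in Section~\ref{S.zeros}, the zeros of $u^L$ are almost surely nondegenerate (the analogue of~\cite[Proposition 6.5]{AW09} applies, since $u^L$ is smooth and its one-point covariance $\varkappa^L(0)$ converges to the invertible matrix~$I$ by Corollary~\ref{C.diag}, hence is invertible for large admissible~$L$). Thus $\NzL<\infty$ almost surely and the Kac--Rice formula~\cite[Proposition 6.2]{AW09} gives
\[
\bE^L\NzL=\int_{\TT^3}\bE^L\{|\det\nabla u^L(y)|:u^L(y)=0\}\,\rho_{u^L(y)}(0)\,dy\,,
\]
where $\rho_{u^L(y)}(0)$ denotes the value at the origin of the density of the Gaussian vector $u^L(y)$. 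Since the computation in Proposition~\ref{P.covL}(i) gives $\ka^L(x,y)=\varkappa^L(x-y)$, the field $u^L$ is stationary and the integrand is independent of~$y$; as $|\TT^3|=(2\pi)^3$, this reduces matters to the integrand at a single point~$z$.

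Next I would rescale. From $u^{L,z}(0)=u^L(z)$ and $\nabla u^{L,z}(0)=L^{-1/2}\nabla u^L(z)$ one gets $\det\nabla u^L(z)=L^{3/2}\det\nabla u^{L,z}(0)$, and therefore
\[
L^{-3/2}\bE^L\NzL=(2\pi)^3\,\rho_{u^{L,z}(0)}(0)\,\bE^L\{|\det\nabla u^{L,z}(0)|:u^{L,z}(0)=0\}\,.
\]
The structural point to emphasize is that $u^{L,z}$ is again an \emph{exact} Beltrami field, $\curl u^{L,z}=u^{L,z}$ and $\Div u^{L,z}=0$, so the linear-algebra reduction in the proof of Lemma~\ref{L.bE} applies verbatim. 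Setting $\zeta^L:=\nabla u^{L,z}(0)-B^L u^{L,z}(0)$, with $B^L$ chosen to decorrelate $\zeta^L$ from $u^{L,z}(0)$ (well defined once $\mathrm{cov}\,u^{L,z}(0)$ is invertible), the conditional expectation equals $\bE^L|\det\zeta^L|$, and the divergence-free and curl relations force $\zeta^L$ to be a traceless symmetric matrix almost surely. Crucially this holds \emph{exactly} for every admissible~$L$: the identities $\partial_1u_2-\partial_2u_1=u_3$, etc., together with the symmetry of $\mathrm{cov}\,u^{L,z}(0)$, make the correction $B^Lu^{L,z}(0)$ cancel precisely, so $\zeta^L$ lives in the same fixed $5$-dimensional subspace as the limiting variable~$\zeta$ of Lemma~\ref{L.bE}.

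Finally I would pass to the limit using Proposition~\ref{P.covL}: the joint covariance of $(u^{L,z}(0),\nabla u^{L,z}(0))$ converges entrywise to that of $(u(0),\nabla u(0))$, so $\mathrm{cov}\,u^{L,z}(0)\to I$ (whence $\rho_{u^{L,z}(0)}(0)\to(2\pi)^{-3/2}$ by Remark~\ref{R.rho}), and the reduced $5\times5$ covariance ${\Si'}^{L}$ of $\zeta^L$ converges to the invertible matrix~$\Si'$ of Lemma~\ref{L.bE}. Because all the $\zeta^L$ sit in the same $5$-dimensional space with nondegenerate limiting covariance, the explicit Gaussian integral
\[
\bE^L|\det\zeta^L|=(2\pi)^{-5/2}(\det{\Si'}^{L})^{-1/2}\int_{\RR^5}|Q(z)|\,e^{-\frac12 z\cdot({\Si'}^{L})^{-1}z}\,dz
\]
converges to $\bE|\det\zeta|=(2\pi)^{3/2}\nuz$, whence $L^{-3/2}\bE^L\NzL\to(2\pi)^3(2\pi)^{-3/2}(2\pi)^{3/2}\nuz=(2\pi)^3\nuz$. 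The only delicate point is this convergence of the unbounded functional $\bE^L|\det\zeta^L|$: the apparent obstacle---degeneracy of the limiting $9$-dimensional covariance of $\zeta$, which would defeat a naive continuity argument---is neutralized precisely because the exact Beltrami relations pin each $\zeta^L$ to the fixed $5$-dimensional subspace where the covariance is nondegenerate, so that the displayed integral depends continuously on ${\Si'}^{L}$ (equivalently, $\det\zeta^L$ is a fixed cubic in a Gaussian with uniformly bounded, convergent covariance, giving uniform integrability and hence convergence of the mean from the convergence in distribution supplied by Proposition~\ref{P.covL}).
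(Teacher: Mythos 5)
Your proof is correct, and it reaches the limit by a more direct route than the paper's. The paper never applies the Kac--Rice formula globally on the torus: it first identifies $\NzL$ with the number of zeros of the periodic lift of~$u^L$ in the open cube $Q_1=(-\pi,\pi)^3$ (Bulinskaya's lemma rules out zeros on~$\pd Q_1$ almost surely), then runs the sandwich estimate of Lemma~\ref{L.sandwich} over $Q_{1\pm r}$, applies Kac--Rice only to the rescaled field~$u^{L,z}$ on balls $B_{rL^{1/2}}$ --- the exact setting of Section~\ref{S.zeros} --- and finally squeezes, letting $L\to\infty$ and then $r\to0$ so that $|Q_{1\pm r}|\to(2\pi)^3$. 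You instead invoke Kac--Rice once, using stationarity (Proposition~\ref{P.covL}) to factor out $|\TT^3|=(2\pi)^3$, with the rescaling entering only through $\det\nabla u^L(z)=L^{3/2}\det\nabla u^{L,z}(0)$; this eliminates the sandwich and the double limit entirely. Two remarks. First, to use the Euclidean Kac--Rice formula of~\cite{AW09} ``on $\TT^3$'' you still need the lift-to-the-cube step together with the absence of zeros on~$\pd Q_1$, i.e.\ precisely the Bulinskaya argument the paper spells out; your write-up passes over this silently, but it is a one-line fix. Second, your treatment of the limit is more careful than the paper's at the one genuinely delicate point: the paper asserts that the conditional expectation ``can be transformed into an unconditional one just as in the proof of Lemma~\ref{L.bE}'' and that Proposition~\ref{P.covL} gives $\nu^{\mathrm{z},L,z}\to\nuz$, leaving implicit why convergence of covariances yields convergence of the expectation of the unbounded functional $|\det\zeta^{L,z}|$ when the limiting $9\times9$ covariance~$\Si$ is singular. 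Your observation that $\curl u^{L,z}=u^{L,z}$ and $\Div u^{L,z}=0$ hold \emph{exactly} for every admissible~$L$ --- so that $\zeta^L$ is almost surely traceless and symmetric, hence pinned for every~$L$ to the fixed $5$-dimensional subspace on which the limiting covariance~$\Si'$ is invertible, with uniform integrability as a fallback --- supplies exactly the justification the paper's brevity omits, and it is easy to verify (the antisymmetric part and the trace of the correction $B^Lu^{L,z}(0)$ reproduce those of $\nabla u^{L,z}(0)$ identically, by linearity of the covariance identities). You are likewise right, and more precise than the paper's display, in keeping the density factor $\rho_{u^{L,z}(0)}(0)=(2\pi)^{-3/2}\big(\det\varkappa^L(0)\big)^{-1/2}$ rather than $(2\pi)^{-3/2}$, since the one-point covariance of~$u^L$ is only asymptotically the identity; this does not affect the limit.
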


\begin{proof}
  Let us use the notation
\[
Q_R:=(-R\pi,R\pi)\times (-R\pi,R\pi)\times(-R\pi,R\pi)
\]
for the open cube of side~$2\pi R$ in~$\RR^3$ and call $N_{u^L}^{\mathrm{z},*}$ the
number of zeros of~$u^L$ (or rather of its periodic lift to~$\RR^3$) that are contained in~$Q_1$. By Bulinskaya's lemma~\cite[Proposition 6.11]{AW09},
with probability~1 the zero set of~$u^L$ is nondegenerate (and hence a
finite set of points) and the lift of $u^L$ does not have any zeros on the
boundary $\pd Q_1$. Therefore, for any positive integer~$R$,
\[
\NzL= N_{u^L}^{\mathrm{z},*}
\]
almost surely. In particular, both quantities have the same expectation.

Let us now take some small positive real~$r$ and denote by $\NzL(y,r)$
the number of zeros of $u^L$  (or rather of its lift to~$\RR^3$) that are contained in the ball $B_r(y)$. The argument we used to prove
the estimate for $\cN(R;\cX)$ in Lemma~\ref{L.sandwich} (starting now
from the number of zeros in~$Q_1$ instead of in~$B_R$) shows that
\[
\int_{Q_{1-r}}\frac{\NzL(z,r)}{|B_r|}\, dz\leq
N_{u^L}^{\mathrm{z},*}\leq \int_{Q_{1+r}}\frac{\NzL(z,r)}{|B_r|}\, dz\,.
\]

Note now that
\[
\int_{Q_{1\pm r}}\frac{\NzL(z,r)}{|B_r|}\, dz=L^{\frac32}\int_{Q_{1\pm r}}\frac{\NzLz(rL^{1/2})}{|B_{rL^{1/2}}|}\, dz\,.
\]
The expected value of this quantity is
\begin{align*}
\bE^L \int_{Q_{1\pm r}}\frac{\NzLz(rL^{1/2})}{|B_{rL^{1/2}}|}\,
  dz&=\int_{Q_{1\pm
      r}}\frac{\bE^{L,z}\NzLz(rL^{1/2})}{|B_{rL^{1/2}}|}\, dz\\
  &=|Q_{1\pm r}|\frac{\bE^{L,z}\NzLz(rL^{1/2})}{|B_{rL^{1/2}}|}\,.
\end{align*}
To pass to the second line we have used that the expected value inside
the integral is independent of the point~$z$ by
Proposition~\ref{P.covL}; in particular, this value is independent of
the point~$z$ one considers.

We can now argue just as in the case of~$\RR^3$, discussed in detail
in Subsection~\ref{S.zeros}, so we will just sketch the arguments and
refer to that subsection for the notation. The Kac--Rice formula ensures
\begin{align*}
\frac{\bE^{L,z}\NzLz(rL^{1/2})}{|B_{rL^{1/2}}|}&= (2\pi)^{-\frac32}\bE^{L,z}\big(\big\{
                                                 | \det \nabla
                                                 u^{L,z}(0)|: u^{L,z}(0)=0\big\}\big)\,,
\end{align*}
and this conditional expectation can be transformed into an
unconditional one just as in the proof of Lemma~\ref{L.bE}:
\begin{align*}
  \frac{\bE^{L,z}\NzLz(rL^{1/2})}{|B_{rL^{1/2}}|}&=(2\pi)^{-3/2}\bE^{L,z}(|\det\zeta^{L,z}|)\\
  &= \frac{(2\pi)^{-3/2}}{(2\pi)^{5/2}(\det \Si'^{L,z})^{1/2}}\int_{\RR^5}Q^{L,z}(\zeta')\,
    e^{-\frac12 \zeta'\cdot (\Si'^{L,z})^{-1}\zeta'}\, d\zeta'\\
  &=:\nu^{\mathrm{z},L,z}\,.
\end{align*}
The fact that the covariance matrix of $u^{L,z}$ converges to that
of~$u$ as~$L\to\infty$ by Proposition~\ref{P.covL} implies that
\[
  \lim_{L\to\infty}\nu^{\mathrm{z},L,z}=\nuz\,.
  \]

  Hence, writing the aforementioned sandwich estimate as
  \[
|Q_{1-r}|\nu^{\mathrm{z},L,z}\leq \frac{\bE^L\NzL}{L^{3/2}}\leq |Q_{1+r}|\nu^{\mathrm{z},L,z}
  \]
  and letting $L\to\infty$ and then $r\to0$, we infer that
  \[
\lim_{L\to\infty}\frac{\bE^L\NzL}{L^{3/2}}= |Q_1|\nuz=(2\pi)^3\nuz\,.
  \]
  The lemma follows.
\end{proof}

Theorem~\ref{T.torus} is then proven.

\section*{Acknowledgements}

The authors are deeply indebted to Alejandro Luque for the numerical computation of the Melnikov constants in Section~\ref{S.chaos}. A.E.\ is supported by the ERC Starting Grant~633152. D.P.-S.
is supported by the grants MTM-2016-76702-P (MINECO/FEDER) and Europa Excelencia EUR2019-103821 (MCIU). A.R.\ is supported by the grant MTM-2016-76702-P (MINECO/FEDER). This work is supported in part by the ICMAT--Severo Ochoa grant
SEV-2015-0554 and the CSIC grant 20205CEX001.

\appendix

\section{Fourier-theoretic characterization of Beltrami fields}
\label{A.Fourier}

For the benefit of the reader, in this appendix we describe what
polynomially bounded Beltrami fields look like in Fourier space. As
Beltrami fields are a particular class of vector-valued monochromatic
waves, it is convenient to start the discussion by considering
polynomially bounded solutions to the Helmholtz equation
\[
\De F + F=0\,.
\]
As before, we consider the case of monochromatic waves on~$\RR^3$, but
the analysis applies essentially verbatim to any other dimension. The Fourier transform of this equation shows that
\[
(1-|\xi|^2)\hF(\xi)=0\,,
\]
so the support of $\hF$ must be contained in the unit sphere, $\S$. In
spherical coordinates $\rho:=|\xi|\in\RR^+$ and~$\om:=\xi/|\xi|\in \S$, it is standard that
this is equivalent to saying that~$\hF$ is a finite sum of the form
\[
\hF= \sum_{n=1}^N F_n(\om)\, \de^{(n)}(\rho-1)\,.
\]
Here $\de^{(n)}$ is the $n^{\mathrm{th}}$ derivative of the Dirac
measure and $F_n$ is a distribution on the sphere, so $F_n\in
H^{s_n}(\S)$ for some $s_n\in\RR$ (because any compactly supported distribution is in a Sobolev space, possibly of negative order). Note that~$F$ is real valued if and
only if the functions~$F_n$ are Hermitian. Of course, there are also
monochromatic waves that are not polynomially bounded, such as $F:=
e^{x_1}\cos(\sqrt 2 \, x_2)$.

A classical result due to
Herglotz~\cite[Theorem 7.1.28]{Hor15} ensures that if~$F$ is a
monochromatic wave
with the sharp fall off at infinity, i.e., such that
\[
\limsup_{R\to\infty}\frac1R \int_{B_R}F^2\, dx<\infty\,,
\]
then there is a Hermitian vector-valued function $f\in L^2(\S)$ such that
$\hF = f\, \de(\rho-1)$. Furthermore, the value of the above limit is in the
interval $[C_1\|F\|_{L^2(\S)},C_2\|F\|_{L^2(\S)}]$ for some
constants~$C_1,C_2$. This bound means that, on an average sense,
$|F(x)|$ decays as $C/|x|$. The prime example of this behavior is
given by $f=1$, which corresponds to $F(x)=c |x|^{-1/2} J_{1/2}(|x|)$.

The expression~\eqref{random} corresponds to the case $N=0$ above,
since the function~$F_0$ with $\hF_0=f(\om)\, \de(\rho-1)$ is precisely
\[
F_0(x)=\int_\S e^{ix\cdot\om} f(\om)\, d\si(\om)\,.
\]
Also, if $f\in H^{-k}(\S)$ with $k\geq0$ but not necessarily in~$L^2(\S)$, the
function~$F_0$ is bounded as~\cite[Appendix~A]{random}
\begin{equation}\label{boundu}
\sup_{R>0} \frac1R\int_{B_R}\frac{F_0(x)^2}{1+|x|^{2k}}\, dx\leq C\|f\|_{H^{-k}(\S)}\,.
\end{equation}
Hence in this case, $F_0$ is bounded, on an average sense, by
$C|x|^{k-1}$. Therefore, if $f\in H^{-1}(\S)$, $F_0$ is uniformly bounded in average sense.

If $f$ is a Gaussian random field, as considered in the
Nazarov--Sodin theory (see Equation~\eqref{random1}), we showed in
Proposition~\ref{P.conv} that $f$ is almost surely in~$H^{-1-\de}(\S)$ for
all~$\de>0$ and not in~$L^2(\S)$. This behavior morally corresponds to
functions that are bounded on a average sense but do not decay at
infinity, as illustrated by the
function $F_0:=\cos x_1$ generated by $f:= \frac12[\de_{\xi_+}(\xi)+
\de_{\xi_-}(\xi)]$. This is the kind of behavior one needs to describe the expected
local behavior of a high energy eigenfunction on a compact manifold as
one zooms in at a given point.

The monochromatic wave defined as $\hF_n:= f(\om)\,\de^{(n)}(\rho-1)$ reads, in
physical space, as
\begin{align*}
F_n(x)=\int_{\S}\int_0^\infty e^{i\rho x\cdot \om} f(\om)\,\rho^2\,
        \de^{(n)}(\rho-1)\, d\rho\, d\si(\om)
  =(-1)^n \int_\S f(\om)\, \pd_\rho^n|_{\rho=1}(\rho^2 e^{i\rho
    x\cdot \om})\, d\si(\om)\,.
\end{align*}
Note that the $n^{\mathrm{th}}$ derivative term involves an
$n^{\mathrm{th}}$ power of~$x$. Therefore, using the
bound~\eqref{boundu}, one easily finds that $F_n$ is bounded on
average as $C|x|^{n+k-1}$ if $f\in H^{-k}(\S)$; explicit examples with
this growth can be easily constructed by taking $f$ to be either a
constant for $k=0$ or the $(k-1)^{\mathrm{th}}$~derivative of the
Dirac measure for $k\geq1$. Consequently, picking~$f$ as
in~\eqref{random1}, the bound~\eqref{boundu} morally leads to thinking
of~$F_n$ as a function that grows as $|x|^n$ at infinity, which
cannot be the localized behavior of an eigenfunction. This is the
rationale for defining a random monochromatic wave as
in~\eqref{random1}-\eqref{random2}. In this direction, let us recall
that the relation between random monochromatic waves and zoomed-in
high energy eigenfunctions on a various compact manifolds is an
influential long-standing conjecture of Berry~\cite{Berry}. A precise
form of this relation has been recently established in the case of the
round sphere and of the flat torus~\cite{NS09,NS16,Roz17}, which
heuristically shows that~\eqref{random1}-\eqref{random2} is indeed the
proper definition of random monochromatic waves for this purpose.

The reasoning leading to the definition of a random Beltrami field
as~\eqref{random} is completely analogous, and the fact that one can
relate Gaussian random Beltrami fields on~$\RR^3$ to high-frequency
Beltrami fields on the torus just as in the case of
the Nazarov--Sodin theory heuristically ensures that this is indeed
the appropriate definition. For completeness, let us record that, just as in the case of
monochromatic random waves, the Fourier transform of a polynomially
bounded Beltrami field~$u$ is a finite sum of the form
\[
\hu = \sum_{n=1}^N f_n(\om)\, \de^{(n)}(\rho-1)\,,
\]
where now $f_n$ is a Hermitian $\CC^3$-valued distribution
on~$\S$. For $u$~to be a Beltrami field, there is an additional constraint on~$f_n$ coming from the fact that
not every distribution supported on~$\S$ satisfies the equation
$i\xi\times \hu(\xi)= \hu(\xi)$. A straightforward computation shows
that this constraint amounts to imposing that
\[
\sum_{n=j}^N \binom nj \al_{n-j,2} f_n(\om)= i\om\times \sum_{n=j}^N
\binom nj \al_{n-j,3} f_n(\om)
\]
on~$\S$ for all $0\leq j\leq N$. Here $\al_{k,l}:= \prod_{m=0}^{k-1} (l-m)$
with the convention that $\al_{0,l}:=1$. To see this, it suffices to
note that the action of $\hu$ and $i\xi\times \hu$ on a vector field
$w\in C^\infty_c(\RR^3,\RR^3)$ is
\begin{align*}
\langle \hu,w \rangle &= \sum_{n=0}^N (-1)^n\int_\S f_n(\om)\cdot
                        \pd_{\rho}^n|_{\rho=1} \left[\rho^2
                        w(\rho\om)\right]\, d\si(\om)\,,\\
  \langle i\xi\times \hu,w \rangle &= \sum_{n=0}^N (-1)^n\int_\S
                                     i\om\times f_n(\om)\cdot
                        \pd_{\rho}^n|_{\rho=1} \left[\rho^3
                        w(\rho\om)\right]\, d\si(\om)\,,
\end{align*}
expand the $n^{\mathrm{th}}$ derivative using the binomial formula and
note that $\al_{k,l}$ is the $k^{\mathrm{th}}$ derivative of $\rho^l$
at $\rho=1$.

\end{document}